\newtheorem{thm}{{\bf Theorem}}[section]
\newtheorem{cor}[thm]{{\bf Corollary}}
\newtheorem{lem}[thm]{{\bf Lemma}}
\newtheorem{prop}[thm]{{\bf Proposition}}
\theoremstyle{definition}
\theoremstyle{remark}
\newtheorem{exe}[thm]{{\bf  Example}}
\numberwithin{equation}{section}
\begin{document}
\title{Regularity action of abelian linear groups on $\mathbb{C}^{n}$}

\author{Adlene Ayadi and Ezzeddine Salhi}

\address{Adlene Ayadi$^{1}$, \ Department of Mathematics, Faculty of Sciences of Gafsa, Tunisia;  Ezzeddine
Salhi$^{3}$,  Department of Mathematics, Faculty of Sciences of
Sfax \, B.P. 802, 3018 Sfax, Tunisia} \email{ adleneso@yahoo.fr;\
\ Ezzeddine.Salhi@fss.rnu.tn.}

\thanks{This work is supported by the research unit: syst\`emes dynamiques et combinatoire: 99UR15-15}

\subjclass[2000]{37C85}

\keywords{orbit,regular, dense orbit, minimal, subgroup,\dots}

\maketitle

\begin{abstract}

In this paper, we give a characterization of the action of any
abelian subgroup $G$ of $GL(n,\mathbb{C})$ on $\mathbb{C}^{n}$. We
prove that any orbit of $G$ is regular with order  $m\leq 2n$.
Moreover, we give a method to determine this order. In the other
hand, we specify the region of all orbits which are isomorphic. If
G is finitely generated, this characterization is explicit.
\end{abstract}

\section{{\bf Introduction} }

Let $GL(n, \mathbb{C})$ be the group of all reversible square
matrix  over $\mathbb{C}$ with order $n$ and let  $G$  be an abelian subgroup of
GL($n, \mathbb{C})$. There is a natural
linear action $GL(n, \mathbb{C})\times \mathbb{C}^{n}  : \
\longrightarrow\ \mathbb{C}^{n}$.  $(A, v) \ \longmapsto \ Av$.
For a vector $v \in \mathbb{C}^{n}$, denote by $G(v) = \{Av, \ A
\in G \}\subset \mathbb{C}^{n}$ the \emph{orbit} of $G$ through
$v$. A subset $E \subset \mathbb{C}^{n}$  is called
\emph{$G$-invariant} if $A(E)\subset E$   for any   $A\in G$; that
is   $E$   is a union of orbits. Denote by $\overline{E}$ (resp.
$\overset{\circ}{E}$ ) the closure (resp. interior) of $E$.

 An orbit $\gamma$ is
called {\it regular} with order $m$ if for every $v\in \gamma$
there exists an open set $O$ containing $v$ such that
$\overline{\gamma}\cap O$ is a manifold with dimension $m$ over
$\mathbb{R}$. In particular, $\gamma$ is  locally dense in
$\mathbb{C}^{n}$ if and only if  $m=2n$, and $\gamma$ is discrete
if and only if $m=0$. Notice that, the closure of a regular orbit
is not necessary a manifold (see example 8.4). We say that the action of $G$
is {\it regular} on $\mathbb{C}^{n}$ if every orbit of $G$ is regular. Here, the question
to investigate is the following:\
\\
\emph{(1) The orbits of $G$ are they regular?}\
\\
\emph{(2) If $G$ has a  regular orbit, how can we determine its
order?}
\medskip

The notion of regular orbit is a generalization of non exceptional
orbit defined for the action of any group of homeomorphism on a
topological space $X$. A nonempty compact subset $Y\subset X$ is a
minimal set if for every $y\in Y$ the orbit of $y$ is dense in
$Y$. In \cite{G}, Gottschalk discussed the question of what sets
can be minimal sets. A minimal set which is a Cantor set is called
an exceptional set. Their dynamics were recently initiated for
some classes in different point of view, (see for instance,
\cite{HP},\cite{Denj},\cite{YK},\cite{PS},\cite{VG},\cite{GK.KK}).

\medskip

For a subset $E\subset \mathbb{C}^{n}$, denote by $vect(E)$ the
vector subspace of $\mathbb{C}^{n}$ generated by all elements of
$E$. For every $u\in \mathbb{C}^{n}$ denote by;\
\\
- $E(u)=vect(G(u))$\
\\
- $r(u)=\mathrm{dim}(E(u))$  over $\mathbb{C}$.\
\\
- $rank(G(u))=\mathrm{dim}(E(u))$.\

 \bigskip

S.Chihi proved in \cite{S.Ch} that for an abelian linear action,
every orbit $\gamma$ is contained in a locally closed sub-manifold
$V(\gamma)$ such that $\gamma\subset V(\gamma)\subset
\overline{\gamma}$. Moreover, he shows that if
$\overline{\gamma_{1}}=\overline{\gamma_{2}}$ then $\gamma_{1}$
and $\gamma_{2}$ are isomorphic and
$rank(\gamma_{1})=rank(\gamma_{2})$.

\medskip

The purpose of this paper is to give a complete answer to the
above question for any abelian subgroup of $GL(n, \mathbb{C})$. In
\cite{aAhM05}, the authors present a global dynamic of every
abelian subgroup of $GL(n, \mathbb{C})$  and in \cite{aAh-M05},
they gave a characterization of existence of dense orbit for any
abelian subgroup of $GL(n, \mathbb{C})$. Our main result is viewed
as continuation of work in \cite{aAhM05} and \cite{aAh-M05}. We
found similar result given in
 \cite{S.Ch} as a consequence of Theorem ~\ref{T:3}, and we prove that every orbit is regular and we
characterize its order $m$. If $G$ is  finitely generated, this
characterization is explicit.\
\\
Denote by:
\\
- $\mathbb{C}^{*}=\mathbb{C}\backslash\{0\}$,
$\mathbb{R}^{*}=\mathbb{R}\backslash\{0\}$ and
$\mathbb{N}^{*}=\mathbb{N}\backslash\{0\}$.\
\
\\
- $e^{(k)} = [e^{(k)}_{1},\dots,  e^{(k)}_{r}]^{T}\in
\mathbb{C}^{n+1}$ where $$e^{(k)}_{j} = \left\{\begin{array}{c}
  0\in \mathbb{C}^{n_{j}}\ \ \mathrm{if}\ \ j\neq k \\
  e_{k,1}\ \ \ \ \ \ \ \ \mathrm{if}\ \ j = k  \\
\end{array}\right. \ \ \ \ \ \  for \ \ every\ \ 1\leq j,k\leq r.$$
\
\\
- $M_{n}(\mathbb{C})$  the set of all square matrix of order
$n\geq 1$ with coefficients in $\mathbb{C}$.\
\\
- $M_{p,q}(\mathbb{C})$ the set of all matrix having $p$ lines and
$q$ colons with coefficients in $\mathbb{C}$.\
\\
- $\mathbb{T}_{m}(\mathbb{C})$ the set of matrices over
$\mathbb{C}$ of the form $$\begin{bmatrix}
  \mu &  &   & 0 \\
  a_{2,1} & \mu &   &  \\
  \vdots &  \ddots & \ddots &  \\
  a_{m,1} & \dots & a_{m,m-1} & \mu
\end{bmatrix} \qquad (1)$$
\\
\\
-\; $\mathbb{T}_{m}^{\ast}(\mathbb{C})$  the group of matrices of
the form $(1)$ with $\mu\neq 0$.
\\
Let $r\in \mathbb{N}^{*}$ and
$\eta=(n_{1},\dots,n_{r})\in(\mathbb{N}^{*})^{r}$ such that
$\underset{i=1}{\overset{r}{\sum}}n_{i}=n$. Denote by:\
\\
 - $\mathcal{K}_{\eta,r}(\mathbb{C}) = \mathbb{T}_{n_{1}}(\mathbb{C})\oplus\dots\oplus\mathbb{T}_{n_{r}}(\mathbb{C}).$
\\
-
$\mathcal{K}^{*}_{\eta,r}(\mathbb{C})=\mathcal{K}_{\eta,r}(\mathbb{C})\cap
GL(n, \mathbb{C})$.\
\\
- $\mathcal{C}_{0} = (e_{1},\dots,e_{n})$  the canonical basis of
$\mathbb{C}^{n}$.

\bigskip

 The author have
proved in \cite{aAh-M05}, that for every abelian subgroup of
$GL(n, \mathbb{C})$ there exists $P\in GL(n, \mathbb{C})$ such
that $P^{-1}GP$ is a subgroup of
$\mathcal{K}^{*}_{\eta,r}(\mathbb{C})$ for some
$r\in\mathbb{N}^{*}$ and
$\eta=(n_{1},\dots,n_{r})\in(\mathbb{N}^{*})^{r}$ (see Proposition
~\ref{p:8}). We say that $\widetilde{G}=P^{-1}GP$ is  {\it a
normal form} of $G$.  We let\
\\
- $\mathrm{g}
=\exp^{-1}(G)\cap\left[P\left(\mathcal{K}_{\eta,r}(\mathbb{C})\right)P^{-1}\right]$\
\\
- $\mathrm{g}_{u} = \{Bu, \  \ B\in \mathrm{g} \}, \
u\in\mathbb{C}^{n}.$\
\\
 One has   $exp(\mathrm{g}) = G$
(see Lemma ~\ref{L:1}).

\bigskip

For any closed additive subgroup $F$ of $\mathbb{C}^{n}$, we say
that dim$(F)=s$, if $s$ is the bigger dimension of all vector
spaces over $\mathbb{R}$ contained in $F$. Moreover, if $F$ is
considered as a manifold over $\mathbb{R}$, then dim$(F)=s$. (See
Proposition ~\ref{p:3}).
\bigskip

 Finally, consider the following rank condition on a collection of
vectors $u_{1},\dots,u_{p}\in \mathbb{R}^{n}$, $p>n$. Suppose that
$(u_{1},\dots,u_{n})$ is a basis of $\mathbb{R}^{n}$ and there
exists $0\leq m\leq n$ such that
$u_{k}=\underset{j=1}{\overset{m}{\sum}}\alpha_{k,j}u_{n-m+j}$,
for every $n+1\leq k\leq p$, $\alpha_{k,j}\in\mathbb{R}^{*}$.
\
\\
\\
$\bullet$  We say that $u_{1},\dots,u_{p}\in \mathbb{R}^{n}$
satisfy \emph{property} $\mathcal{D}(m)$ if and only if  for every
$(t_{1},...,t_{m},s_{1},...,s_{p-n})\in
 \mathbb{Z}^{p-n+m}-\{0\}$ :
$$rank\left(\left[\begin{array}{cccccccc }
 1&0&\dots& 0&\alpha_{n+1,1} &\dots  &\dots  & \alpha_{p,1} \\
  0&\ddots & \ddots& \vdots&\vdots & \vdots & \vdots & \vdots \\
  \vdots&\ddots & \ddots&0 & \vdots & \vdots & \vdots& \vdots \\
 0& \dots & 0& 1& \alpha_{n+1,m} &\dots  &\dots  & \alpha_{p,m} \\
  t_{1}& \dots &\dots& t_{m}& s_{1} &\dots &\dots  & s_{p-n}
 \end{array}\right]\right) =\ m+1.$$
 and this is equivalent by Lemma~\ref{L:13} to say  that \ \ $\mathbb{Z}u_{n-m+1}+\dots+\mathbb{Z}u_{p}$
 is dense in $\mathbb{R}u_{n-m+1}\oplus\dots\oplus\mathbb{R}u_{n}$.
\
\\
\\
$\bullet$ For every permutation $\sigma\in\mathcal{S}_{p}$ we say
that $u_{\sigma(1)},\dots,u_{\sigma(p)}\in \mathbb{R}^{n}$ satisfy
\emph{property} $\mathcal{D}(m)$  if $u_{1},\dots,u_{p}\in
\mathbb{R}^{n}$ satisfy \emph{property} $\mathcal{D}(m)$.
\
\\
\\
For a vector $v\in\mathbb{C}^{n}$, we write $v = \textrm{Re}(v)+
i\textrm{Im}(v)$ where $\textrm{Re}(v), \ \textrm{Im}(v)\in
\mathbb{R}^{n}$.\
\medskip

Let  $\theta : \mathbb{C}^{n}\longrightarrow \mathbb{R}^{2n}$ be
the isomorphism, defined by
$$\theta(z_{1},\dots,z_{n})=(Re(z_{1}),\dots,Re(z_{n});
Im(z_{1}),\dots,Im(z_{n})).$$\
\\
$\bullet$ We say that $v_{1},\dots,v_{p}\in\mathbb{C}^{n}$ satisfy
\emph{property} $\mathcal{D}(m)$, if
$\theta(v_{1}),\dots,\theta(v_{p})\in\mathbb{R}^{2n}$  satisfy
\emph{property} $\mathcal{D}(m)$.

\bigskip
\
\\
\\
Our principal results can be stated as follows:

\begin{thm} \label{T:1}Let $G$ be an abelian subgroup of $GL(n, \mathbb{C})$. Then for every $u\in
\mathbb{C}^{n}$, there exists a $G$-invariant dense open subset
$U_{u}$ of $E(u)$, containing $u$, such that:
\begin{itemize}
  \item[(i)] For every $v\in U_{u}$, we have $E(v)=E(u)$.
  \item[(ii)] All orbit in
$U_{u}$ are isomorphic.
  \item[(iii)] $E(u)\backslash U_{u}$ \ is a union of at most $r(u)$ $G$-invariant  vector subspaces of $E(u)$ with
  dimension $r(u)-1$ over $\mathbb{C}$.
\end{itemize}
\end{thm}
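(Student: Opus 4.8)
The plan is to recast the statement in terms of the finite-dimensional commutative algebra $\mathcal{A} = \mathrm{vect}(G) \subseteq M_n(\mathbb{C})$ spanned by $G$. Since $G$ is an abelian group containing $I$, this $\mathcal{A}$ is a commutative unital subalgebra, and for every $w \in \mathbb{C}^{n}$ one has $E(w) = \mathrm{vect}(G(w)) = \mathcal{A}\,w$; moreover a subspace is $G$-invariant if and only if it is an $\mathcal{A}$-submodule. Thus $E(u)$ is the cyclic $\mathcal{A}$-module generated by $u$, and with $I = \mathrm{Ann}_{\mathcal{A}}(u)$ the map $a \mapsto a\,u$ is an $\mathcal{A}$-module isomorphism $\mathcal{A}/I \xrightarrow{\ \sim\ } E(u)$ sending $\bar 1 \mapsto u$. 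I would first record these identifications, optionally conjugating $G$ into the normal form $\mathcal{K}^{*}_{\eta,r}(\mathbb{C})$ of Proposition~\ref{p:8}, so that $\mathcal{A} \subseteq \mathcal{K}_{\eta,r}(\mathbb{C}) = \bigoplus_{i} \mathbb{T}_{n_i}(\mathbb{C})$ becomes an explicit product of local algebras; conjugation by $P$ is a $\mathbb{C}$-linear homeomorphism, so it preserves invariance, openness, density, dimension, and the isomorphism type of orbits.

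Next I would set $R = \mathcal{A}/I \cong E(u)$. Because $\mathbb{C}$ is algebraically closed, this finite-dimensional commutative $\mathbb{C}$-algebra splits as a product $R \cong \prod_{k=1}^{s} L_{k}$ of local Artinian $\mathbb{C}$-algebras, each with residue field $\mathbb{C}$. Its maximal ideals $\mathfrak{m}_{1},\dots,\mathfrak{m}_{s}$ correspond bijectively to the maximal $\mathcal{A}$-submodules $W_{1},\dots,W_{s}$ of $E(u)$, each with $\dim_{\mathbb{C}} E(u)/W_{k} = 1$, i.e. $\dim_{\mathbb{C}} W_{k} = r(u)-1$; and since $\dim_{\mathbb{C}} R = \sum_{k}\dim_{\mathbb{C}} L_{k} \ge s$ we get $s \le r(u)$. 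Now define $U_{u} = \{\,v \in E(u) : E(v) = E(u)\,\}$. For $v = a\,u$ one has $E(v) = \mathcal{A}\,v = E(u)$ iff $\bar a$ generates $R$ iff $\bar a$ is a unit iff $\bar a \notin \bigcup_{k}\mathfrak{m}_{k}$, which under $R \cong E(u)$ reads $U_{u} = E(u) \setminus \bigcup_{k=1}^{s} W_{k}$. This at once gives (iii), the complement being a union of at most $r(u)$ $G$-invariant hyperplanes, and the required openness and density of $U_{u}$: a finite union of proper subspaces is closed and nowhere dense. That $u \in U_{u}$ and that $U_{u}$ is $G$-invariant are immediate, the latter from $E(Av) = A\,E(v) = A\,E(u) = E(u)$, while (i) holds by the definition of $U_{u}$.

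For (ii) I would use that every $v \in U_{u}$ is a generator of the cyclic module $E(u)$, so $\mathrm{Ann}_{\mathcal{A}}(v) = \mathrm{Ann}_{\mathcal{A}}(E(u)) = I$ is independent of $v$. Given $v,w \in U_{u}$ with $v = a\,u$ and $w = b\,u$, the classes $\bar a,\bar b$ are units in $R$, and multiplication by $\bar b\,\bar a^{-1}$ is an $R$-module automorphism of $R \cong E(u)$, hence an $\mathcal{A}$-linear, and therefore $G$-equivariant and $\mathbb{C}$-linear, automorphism $\varphi$ of $E(u)$ with $\varphi(v) = w$. Equivariance gives $\varphi(A v) = A\,\varphi(v) = A\,w$ for all $A \in G$, so $\varphi$ restricts to a $G$-equivariant homeomorphism $G(v) \to G(w)$; thus all orbits in $U_{u}$ are isomorphic.

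The genuinely load-bearing steps are the two algebraic inputs: that $G$-invariant subspaces coincide with $\mathcal{A}$-submodules (turning the geometric bad set into the module-theoretic union of maximal submodules), and that algebraic closedness of $\mathbb{C}$ forces each residue field to be $\mathbb{C}$ (which makes every maximal submodule a genuine hyperplane and caps the count at $r(u)$). I expect the main obstacle to be not these, which are structural, but the faithful bookkeeping one incurs following the paper's explicit route: after passing to $\mathcal{K}^{*}_{\eta,r}(\mathbb{C})$ one must identify the $W_{k}$ concretely as the leading-coordinate hyperplanes attached to the blocks supporting $u$, and verify the count block by block — precisely the computation the module-theoretic argument above sidesteps.
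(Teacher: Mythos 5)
Your proof is correct, but it takes a genuinely different route from the paper's. The paper first conjugates $G$ into the normal form $\mathcal{K}^{*}_{\eta,r}(\mathbb{C})$ of Proposition~\ref{p:8} (applied to $G_{/E(u)}$ after reducing to $E(u)=\mathbb{C}^{n}$), takes $U_{u}$ to be the explicit set $\prod_{k}\mathbb{C}^{*}\times\mathbb{C}^{n_{k}-1}$ whose complement is the union of the $r\le r(u)$ leading-coordinate hyperplanes of the blocks (this is its proof of (iii)), and then gets (i) and (ii) from Proposition~\ref{p:7+0+}: Chihi's Lemma~\ref{L:6} supplies $B\in\mathcal{G}$ with $Bu=v$, and the triangular block structure shows the diagonal entry of $B$ is $y_{1}/x_{1}\neq 0$, hence $B\in GL(n,\mathbb{C})$ and $G(v)=B(G(u))$. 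Your argument replaces all of this by the intrinsic structure of $R=\mathcal{A}/\mathrm{Ann}_{\mathcal{A}}(u)$: the non-generators of the cyclic module $E(u)\cong R$ are exactly the union of the maximal ideals, there are at most $\dim_{\mathbb{C}}R=r(u)$ of them, and each has codimension one because $\mathbb{C}$ is algebraically closed; the orbit isomorphism in (ii) is realized as multiplication by a unit of $R$. The two proofs are really the abstract and concrete faces of the same mechanism --- the paper's observation that $\mu_{B}=y_{1}/x_{1}\neq 0$ forces invertibility is precisely the statement that an element of the local algebra $\mathbb{T}_{n}(\mathbb{C})$ outside the maximal ideal is a unit --- but yours avoids the normal form and the external input from \cite{S.Ch} entirely, gives a cleaner justification of (iii) (which the paper dispatches with ``by construction''), and makes $u\in U_{u}$ tautological rather than a consequence of $E(u)=\mathbb{C}^{n}$. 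What the paper's version buys in exchange is the explicit machinery (the normal form, the parametrization $\Phi_{u}$, the concrete description of $U$) that is reused in the proofs of Theorems~\ref{T:2}--\ref{T:4}, so it is not wasted effort in context; your version is shorter, coordinate-free, and works verbatim over any algebraically closed field.
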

\medskip

By the following Theorem, the order of any orbit is equal to
 dimension of a closed additive subgroup of $\mathbb{C}^{n}$, over $\mathbb{R}$.
\begin{thm}\label{T:2} Let  $G$  be an abelian subgroup of  GL$(n,\ \mathbb{C})$ and $u\in\mathbb{C}^{n}$.
The following are equivalent:
\begin{enumerate}
    \item [(i)]  $G(u)$  is regular with order $m$.
    \item [(ii)]  For every $v\in U_{u}$,  $G(v)$  is regular with order $m$.
    \item [(iii)]  dim$\left(\overline{\mathrm{g}_{u}}\right)=m$ over $\mathbb{R}$.
\end{enumerate}
\end{thm}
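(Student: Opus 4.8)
\emph{Plan.} I would first settle $(i)\Leftrightarrow(ii)$ from Theorem~\ref{T:1}, and then prove the substantive equivalence with $(iii)$ by linearizing the action near an orbit point through a logarithm chart, under which the orbit closure becomes a translate of the closed additive subgroup $\overline{\mathrm{g}_u}$. Since $u\in U_u$, the implication $(ii)\Rightarrow(i)$ is immediate (take $v=u$). For $(i)\Rightarrow(ii)$, Theorem~\ref{T:1}(ii) gives that every orbit inside $U_u$ is isomorphic to $G(u)$; an isomorphism identifies the two orbit closures, and by invariance of domain a point admitting a neighbourhood homeomorphic to $\mathbb{R}^{m}$ in one closure corresponds to such a point in the other, so the regular order is an isomorphism invariant. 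Hence all orbits in $U_u$ share the regular order of $G(u)$, and it remains to identify this order with $\dim_{\mathbb{R}}\overline{\mathrm{g}_u}$.

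Next I would reduce to a normal form. Conjugating by the matrix $P$ of Proposition~\ref{p:8} is a $\mathbb{C}$-linear (hence $\mathbb{R}$-linear) diffeomorphism of $\mathbb{C}^{n}$ carrying $G$-orbits to $\widetilde{G}$-orbits and $\mathrm{g}_u$ to $P^{-1}\mathrm{g}_u$, so it preserves both the regular order and $\dim_{\mathbb{R}}\overline{\mathrm{g}_u}$; I may thus assume $G\subseteq\mathcal{K}^{*}_{\eta,r}(\mathbb{C})$ and $\mathrm{g}=\exp^{-1}(G)\cap\mathcal{K}_{\eta,r}(\mathbb{C})$. Because $\mathcal{K}_{\eta,r}(\mathbb{C})$ is a commutative algebra, for $B_{1},B_{2}\in\mathrm{g}$ one has $\exp(B_{1}+B_{2})=\exp(B_{1})\exp(B_{2})\in G$ and $\exp(-B_{1})=\exp(B_{1})^{-1}\in G$, so $\mathrm{g}$ is an additive subgroup of $\mathcal{K}_{\eta,r}(\mathbb{C})$; consequently $\mathrm{g}_u$ is an additive subgroup of $\mathbb{C}^{n}$ and $\overline{\mathrm{g}_u}$ a closed one, of well-defined dimension over $\mathbb{R}$ by Proposition~\ref{p:3}. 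Moreover $\exp$ restricted to each block $\mathbb{T}_{n_{i}}(\mathbb{C})\to\mathbb{T}^{*}_{n_{i}}(\mathbb{C})$ is a covering map (the unipotent part is a polynomial diffeomorphism, only the scalar part $\mathbb{C}\to\mathbb{C}^{*}$ being non-injective), so $\exp\colon\mathcal{K}_{\eta,r}(\mathbb{C})\to\mathcal{K}^{*}_{\eta,r}(\mathbb{C})$ is a local diffeomorphism; lifting convergent sequences through it yields $\overline{\mathrm{g}}=\exp^{-1}(\overline{G})$ and $\exp(\overline{\mathrm{g}})=\overline{G}$.

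The core is to show that for every base point the orbit $G(u)$ is regular with order $s:=\dim_{\mathbb{R}}\overline{\mathrm{g}_u}$. By applying a group element (a linear automorphism preserving the orbit) it suffices to analyse $\overline{G(u)}$ near $u$ itself. I would then restrict attention to the invariant subspace spanned by the leading nonzero coordinate of $u$ in each Toeplitz block: the lower-triangular unipotent structure preserves these leading positions, so the orbit never leaves this subspace. For $z$ close to $u$ inside it, the equation $z=\exp(B)u$ recovers the displacement $Bu$ uniquely modulo the stabiliser, and the assignment $z\mapsto Bu$ is a local diffeomorphism $\Theta$ onto a neighbourhood of $0$ in $E(u)$ carrying $G(u)$ onto $\mathrm{g}_u$; passing to closures, $\Theta$ identifies $\overline{G(u)}$ near $u$ with $\overline{\mathrm{g}_u}$ near $0$. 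Since a closed additive subgroup is, near each of its points, a manifold whose dimension equals that of its largest $\mathbb{R}$-vector subspace (Proposition~\ref{p:3}), $\overline{G(u)}$ is near $u$ a manifold of dimension $s$, i.e.\ $G(u)$ is regular with order $s=\dim_{\mathbb{R}}\overline{\mathrm{g}_u}$. Taking the base point to be $u$ yields $(i)\Leftrightarrow(iii)$ with $m=\dim_{\mathbb{R}}\overline{\mathrm{g}_u}$; combined with $(i)\Leftrightarrow(ii)$ this gives the full equivalence (and forces $\dim_{\mathbb{R}}\overline{\mathrm{g}_v}=\dim_{\mathbb{R}}\overline{\mathrm{g}_u}$ for all $v\in U_u$).

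The hard part is the closure step in the core. The delicate point is that the closure of the multiplicative orbit must be matched with the closure of the additive subgroup $\mathrm{g}_u$, and \emph{not} with the possibly smaller orbit of the ambient closed group $\overline{G}$: when $u$ is degenerate, the restricted (projected) group can be dense in a strictly larger subgroup than the restriction of $\overline{G}$, a Kronecker-type density phenomenon that raises the orbit dimension. Making $\Theta$ a genuine local diffeomorphism up to closures—showing that no group elements escaping to infinity create accumulation near $u$ beyond $\overline{\mathrm{g}_u}$—is exactly where the leading-coordinate reduction and the density criterion of property $\mathcal{D}(m)$ (Lemma~\ref{L:13}) are needed to compute $\dim_{\mathbb{R}}\overline{\mathrm{g}_u}$ correctly.
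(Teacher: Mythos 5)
Your proposal is correct and takes essentially the same route as the paper: reduce to the normal form, use the parametrization $\Phi_{u}$ together with the local diffeomorphism $\exp$ on $\mathcal{K}_{\eta,r}(\mathbb{C})$ to build a chart identifying $\overline{G(u)}$ near an orbit point with $\overline{\mathrm{g}_{u}}$ near $0$ (your sequence-lifting observation that $\overline{\mathrm{g}}=\exp^{-1}(\overline{G})$ is precisely the content of Lemma~\ref{L:11}), and then invoke the structure theorem for closed additive subgroups to read off the dimension. The only misstatement is in your last paragraph: property $\mathcal{D}(m)$ and Lemma~\ref{L:13} are not needed for the closure step (they enter only in the proof of Theorem~\ref{T:4}); the local lifting argument you already gave is what does the work.
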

\medskip

As a consequence from Theorem ~\ref{T:2} we prove, by the following
corollary, that the action of any abelian subgroup of
$GL(n, \mathbb{K})$ on $\mathbb{K}^{n}$ is regular ($\mathbb{K}=\mathbb{R}$ or $\mathbb{C}$).

\begin{cor}\label{C:02} Let  $G$ be an abelian subgroup of $GL(n,
\mathbb{C}) ($resp. $GL(n, \mathbb{R}))$. Then every orbit of $G$
is regular with order $0\leq m \leq 2n\ ($resp. $0\leq m \leq n)$.
\end{cor}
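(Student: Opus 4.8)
The plan is to read the corollary off Theorem~\ref{T:2}: that theorem already identifies the order of $G(u)$ with $\dim(\overline{\mathrm{g}_u})$, so essentially all that remains is to check that this dimension is always defined and then to bound it by the dimension of the ambient real space in which the orbit closure lives.

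First I would verify that $\mathrm{g}_u$ is an additive subgroup of $\mathbb{C}^n$. By definition $\mathrm{g}=\exp^{-1}(G)\cap\left[P\,\mathcal{K}_{\eta,r}(\mathbb{C})\,P^{-1}\right]$, and $P\,\mathcal{K}_{\eta,r}(\mathbb{C})\,P^{-1}$ is an $\mathbb{R}$-vector space, hence closed under addition. Since $G$ is abelian, the matrices in $\mathrm{g}$ commute (the logarithms of commuting triangular matrices commute), so for $B_1,B_2\in\mathrm{g}$ one has $\exp(B_1+B_2)=\exp(B_1)\exp(B_2)\in G$ and $\exp(-B_1)=\exp(B_1)^{-1}\in G$; thus $B_1+B_2,\,-B_1\in\mathrm{g}$ and $\mathrm{g}$ is an additive subgroup. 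As $B\mapsto Bu$ is $\mathbb{R}$-linear, its image $\mathrm{g}_u$ is an additive subgroup of $\mathbb{C}^n$, and therefore $\overline{\mathrm{g}_u}$ is a closed additive subgroup.

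Next, by Proposition~\ref{p:3} every closed additive subgroup of $\mathbb{C}^n\cong\mathbb{R}^{2n}$ has a well-defined dimension over $\mathbb{R}$; put $m=\dim(\overline{\mathrm{g}_u})$, so that $0\le m\le 2n$ since $\overline{\mathrm{g}_u}\subset\mathbb{R}^{2n}$. Condition (iii) of Theorem~\ref{T:2} now holds with this $m$, so by the implication (iii)$\Rightarrow$(i) the orbit $G(u)$ is regular with order $m$. As $u\in\mathbb{C}^n$ was arbitrary, every orbit of $G$ is regular, which settles the complex case with the bound $0\le m\le 2n$.

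Finally, for $G\subset GL(n,\mathbb{R})$ I would view $G$ as an abelian subgroup of $GL(n,\mathbb{C})$ and invoke the complex case to obtain regularity of each orbit. For $u\in\mathbb{R}^n$ the whole orbit $G(u)$, and hence $\overline{G(u)}$, lies in $\mathbb{R}^n$; therefore the local manifold $\overline{G(u)}\cap O$ furnished by regularity is contained in $\mathbb{R}^n$, forcing $m\le n$. I expect the only delicate point to be the verification that $\mathrm{g}$ is genuinely closed under addition, so that Proposition~\ref{p:3} applies to $\overline{\mathrm{g}_u}$ and $m$ is defined; once Theorem~\ref{T:2} supplies regularity, the bounds $2n$ and $n$ are simply the dimensions of the ambient real spaces $\mathbb{R}^{2n}$ and $\mathbb{R}^{n}$ containing the orbit closures.
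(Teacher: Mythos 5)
Your proof is correct and follows essentially the same route as the paper: the complex case is read off from the equivalence between regularity of order $m$ and $\dim\left(\overline{\mathrm{g}_{u}}\right)=m$ together with the fact that $\overline{\mathrm{g}_{u}}$ is a closed additive subgroup of $\mathbb{C}^{n}\cong\mathbb{R}^{2n}$, and the real case is obtained by viewing $G$ inside $GL(n,\mathbb{C})$ and noting that the local manifold $\overline{G(x)}\cap O$ sits in $\mathbb{R}^{n}$, forcing $m\leq n$. The only cosmetic difference is that you re-derive the additive subgroup property of $\mathrm{g}_{u}$ directly (via commuting logarithms), whereas the paper simply cites Lemma~\ref{L:1}.(i).
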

\medskip

Where $GL(n, \mathbb{R})$ denotes the group of all reversible
square matrix  over $\mathbb{R}$ with order $n$.

\begin{cor}\label{C:2} Let $G$ be an abelian subgroup of $GL(n, \mathbb{K})$ $(\mathbb{K}=\mathbb{R}$ or $\mathbb{C})$.
\begin{itemize}

\item [(i)] If  $\mathbb{K}=\mathbb{R}$ then an orbit $G(u)$ is regular with order $m = n$   \ if and
only if  it  is locally dense.

 \item [(ii)] If  $\mathbb{K}=\mathbb{C}$ then an orbit $G(u)$ is regular with order $m = 2n$  \ if and
only if   \ it is dense in $\mathbb{C}^{n}$.
 \end{itemize}
\end{cor}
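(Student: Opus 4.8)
The two statements both describe the case of maximal order, and the plan is to reduce them to Theorem~\ref{T:2} and to the definition of regularity, the real difficulty lying only in part~(ii). By Corollary~\ref{C:02} the order of $G(u)$ is at most $n$ (resp.\ $2n$), so an orbit of order $n$ (resp.\ $2n$) has maximal order. Unravelling the definition, an orbit $\gamma$ has order equal to the ambient real dimension exactly when, for every $v\in\gamma$, the set $\overline{\gamma}\cap O$ is an open subset of the ambient space for some $O\ni v$, i.e.\ when $\gamma\subset\overset{\circ}{\overline{\gamma}}$; this is precisely the statement that $\gamma$ is locally dense. This already yields (i), and the implication ``dense $\Rightarrow$ order $2n$'' of (ii): if $\overline{G(u)}=\mathbb{C}^{n}$ then trivially $G(u)$ is locally dense, hence of order $2n$.

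For the remaining implication of (ii), I assume $G(u)$ is regular of order $2n$, so that $G(u)$ is locally dense, and I must prove $\overline{G(u)}=\mathbb{C}^{n}$. Put $\Omega:=\overset{\circ}{\overline{G(u)}}$. Since $A(\overline{G(u)})=\overline{G(u)}$ for every $A\in G$ (each $A$ being a linear homeomorphism with $A\,G(u)=G(u)$), the set $\Omega$ is open, nonempty, $G$-invariant, and contains $u$ by local density. As $\overline{G(u)}\subset E(u)$ and $\Omega\neq\emptyset$, we get $E(u)=\mathbb{C}^{n}$, whence $r(u)=n$; Theorem~\ref{T:1}(iii) then gives $U_{u}=\mathbb{C}^{n}\setminus H$, where $H$ is a union of at most $n$ complex hyperplanes. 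In particular $U_{u}$ is open, dense, connected and contains $u$.

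The heart of the proof is to show that no point of $U_{u}$ lies on the boundary of $\overline{G(u)}$. Let $v\in U_{u}\cap\overline{G(u)}$. By Theorem~\ref{T:2}(ii) the orbit $G(v)$ is again regular of order $2n$, hence locally dense, so $v\in\overset{\circ}{\overline{G(v)}}$; and since $v\in\overline{G(u)}$ while $\overline{G(u)}$ is closed and $G$-invariant, $\overline{G(v)}\subset\overline{G(u)}$, whence $\overset{\circ}{\overline{G(v)}}\subset\Omega$ and therefore $v\in\Omega$. Thus $U_{u}\cap\overline{G(u)}=U_{u}\cap\Omega$, so $U_{u}$ is the disjoint union of the two open sets $U_{u}\cap\Omega$ and $U_{u}\setminus\overline{G(u)}$. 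Since $U_{u}$ is connected and $u\in U_{u}\cap\Omega\neq\emptyset$, the second set is empty, i.e.\ $U_{u}\subset\overline{G(u)}$. Taking closures gives $\mathbb{C}^{n}=\overline{U_{u}}\subset\overline{G(u)}$, so $G(u)$ is dense.

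The step I expect to be delicate is exactly this last one, and it is where the two cases genuinely diverge. The argument uses that $U_{u}$ is connected, which holds in $\mathbb{C}^{n}$ because the deleted set $H$ is a finite union of complex hyperplanes, of real codimension two; over $\mathbb{R}$ the analogous $H$ is a union of real hyperplanes, which disconnects $\mathbb{R}^{n}$, so local density cannot in general be promoted to density and only the weaker statement (i) survives. An alternative to the connectedness argument is to combine Theorem~\ref{T:2}(iii), which forces $\overline{\mathrm{g}_{u}}$ to be a closed subgroup of $\mathbb{C}^{n}$ of full real dimension and hence equal to $\mathbb{C}^{n}$, with the characterization of dense orbits of \cite{aAh-M05}, identifying $\overline{\mathrm{g}_{u}}=\mathbb{C}^{n}$ with density of $G(u)$.
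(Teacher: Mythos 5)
Your proof is correct, and part (i) together with the easy direction of (ii) coincides with the paper's own argument (a subset of $\mathbb{R}^{n}$, resp.\ $\mathbb{C}^{n}$, that is a manifold of full real dimension is open, by invariance of domain, so ``order equal to the ambient dimension'' is the same as local density). Where you genuinely diverge is the implication ``locally dense $\Rightarrow$ dense'' in (ii): the paper disposes of it in one line by invoking Lemma~\ref{L:12} (\cite{aAhM05}, Corollary 1.3), which states exactly that a locally dense orbit of an abelian subgroup of $GL(n,\mathbb{C})$ is dense, whereas you re-derive that fact internally. Your derivation is sound: $E(u)=\mathbb{C}^{n}$ because $\overline{G(u)}$ has nonempty interior; every point of $U_{u}\cap\overline{G(u)}$ is interior to $\overline{G(u)}$ by Theorem~\ref{T:2}(ii) combined with $\overline{G(v)}\subset\overline{G(u)}$; and $U_{u}$ is connected because, by Theorem~\ref{T:1}(iii) with $r(u)=n$, its complement is a finite union of complex hyperplanes, hence of real codimension two. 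What this buys is self-containedness (the corollary no longer leans on the external reference) and a transparent explanation of why the statement degrades to ``locally dense'' over $\mathbb{R}$, namely that real hyperplanes disconnect $\mathbb{R}^{n}$; the paper's route is shorter but silent on this point. Your closing alternative --- forcing $\overline{\mathrm{g}_{u}}=\mathbb{C}^{n}$ from Theorem~\ref{T:2}(iii) and quoting the dense-orbit criterion of \cite{aAh-M05} --- would also work and is closer in spirit to how Lemma~\ref{L:12} is proved in the cited source.
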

\medskip

\begin{thm}\label{T:3} Let  $G$  be an abelian subgroup of $GL(n,
\mathbb{C})$ and $u\in \mathbb{C}^{n}$. Then the closure
$\overline{G(u)}$  is a vector subspace of  $\mathbb{C}^{n}$ if
and only if  $G(u)$ is regular with order $2r(u)$.
\end{thm}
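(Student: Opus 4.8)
The plan is to reduce the whole statement to the action restricted to the complex subspace $E(u)=\mathrm{vect}(G(u))$, inside which the orbit $G(u)$ spans everything, and then to invoke Corollary~\ref{C:2}(ii).

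First I would record that $E(u)$ is a closed, $G$-invariant complex subspace of $\mathbb{C}^{n}$. Indeed, for $A\in G$ commutativity gives $A(G(u))=G(Au)=G(u)$, whence $A(E(u))\subset E(u)$; since $A^{-1}\in G$ also preserves $E(u)$, the restriction $A_{|E(u)}$ is a linear automorphism of $E(u)$. Thus $G_{|E(u)}$ is an abelian subgroup of $GL(E(u))\cong GL(r(u),\mathbb{C})$ for which the orbit of $u$ generates $E(u)$, so that $\dim_{\mathbb{C}}E(u)=r(u)$ and the relevant ``full'' real dimension is $2r(u)$.

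Next I would check that the order of $G(u)$ is the same whether the orbit is viewed in $\mathbb{C}^{n}$ or in $E(u)$. Since $G(u)\subset E(u)$ and $E(u)$ is closed, we have $\overline{G(u)}\subset E(u)$; for any $v\in G(u)$ and any open $O'\subset E(u)$ with $v\in O'$ one may write $O'=O\cap E(u)$ with $O$ open in $\mathbb{C}^{n}$, and then $\overline{G(u)}\cap O=\overline{G(u)}\cap O'$. Hence the local manifold structure, and therefore the order $m$, is intrinsic to $\overline{G(u)}$ and unchanged by passing to $E(u)$.

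With these reductions the two directions follow quickly. If $G(u)$ is regular with order $2r(u)=2\dim_{\mathbb{C}}E(u)$, then applying Corollary~\ref{C:2}(ii) to the abelian group $G_{|E(u)}$ acting on $E(u)$ shows that $G(u)$ is dense in $E(u)$; as $E(u)$ is closed this gives $\overline{G(u)}=E(u)$, a vector subspace. Conversely, suppose $\overline{G(u)}$ is a vector subspace $W$. From $\overline{G(u)}\subset E(u)$ we get $W\subset E(u)$, while $W\supset G(u)$ forces $W\supset\mathrm{vect}(G(u))=E(u)$; hence $W=E(u)$, so $G(u)$ is dense in $E(u)$ and Corollary~\ref{C:2}(ii) yields that $G(u)$ is regular with order $2r(u)$. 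The only genuinely delicate point is the reduction itself: one must verify that Corollary~\ref{C:2}(ii) legitimately applies to $G_{|E(u)}$ (that it is again an abelian subgroup of a general linear group with $u$ a spanning vector) and that the order is invariant under restriction to the invariant subspace $E(u)$. Once these functoriality checks are in place the equivalence is immediate, because $\overline{G(u)}$ is always squeezed between $G(u)$ and $E(u)$, so its being a subspace is precisely the same as its filling out all of $E(u)$.
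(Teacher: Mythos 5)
Your proof is correct and follows essentially the same route as the paper: both arguments reduce to the restriction $G_{/E(u)}$ acting on $E(u)$, observe that $\overline{G(u)}$ is squeezed between $G(u)$ and $E(u)$, and use the fact that a locally dense orbit is dense (Lemma~\ref{L:12}, which is exactly what your appeal to Corollary~\ref{C:2}(ii) for the restricted group unpacks to). Your explicit check that the order of the orbit is unchanged when passing from $\mathbb{C}^{n}$ to the closed invariant subspace $E(u)$ is a detail the paper leaves implicit.
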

\bigskip

For a finitely generated subgroup \ $G\subset \textrm{GL}(n,
\mathbb{C})$, \ such that its normal form $\widetilde{G}=P^{-1}GP$
is a subgroup of $\mathcal{K}^{*}_{\eta,r}(\mathbb{C})$.  We give
an explicit condition to determine the order of any orbit.
\medskip

\begin{thm}\label{T:4} If $G$ is an abelian subgroup of GL$(n, \mathbb{C})$  generated by
$A_{1},\dots,A_{p}$ and let  $B_{1},\dots,B_{p}\in \mathrm{g}$
such that $A_{1} = e^{B_{1}},\dots, A_{p} = e^{B_{p}}$. Then for
every  $u\in \mathbb{C}^{n}\backslash\{0\}$. The following are
equivalent:
\begin{enumerate}
\item [(i)]  $G(u)$  is regular with order $m$.
\item [(ii)]  $B_{1}u,\dots,B_{p}u,\ 2i\pi P(e^{(1)}),\dots,2i\pi P(e^{(r)}) $  satisfy property
$\mathcal{D}(m)$.
\item [(iii)] $\mathrm{g}_{u} =
\underset{k=1}{\overset{p}{\sum}}\mathbb{Z}(B_{k}u) +
\underset{k=1}{\overset{r}{\sum}}2i\pi\mathbb{Z}P(e^{(k)})$  and \
dim$\left(\overline{\mathrm{g}_{u}}\right)=m$.
\end{enumerate}
\end{thm}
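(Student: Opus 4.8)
The plan is to reduce everything to Theorem~\ref{T:2} together with the combinatorial criterion of Lemma~\ref{L:13}. By Theorem~\ref{T:2}, statement (i) is equivalent to $\dim(\overline{\mathrm{g}_{u}})=m$, so the whole theorem amounts to two separate tasks: an \emph{explicit} description of the additive group $\mathrm{g}_{u}\subset\mathbb{C}^{n}$ in terms of the generators $A_{1},\dots,A_{p}$ (this is the equality asserted in (iii)), and a translation of the equality $\dim(\overline{\mathrm{g}_{u}})=m$ into property $\mathcal{D}(m)$ for the listed vectors (this is (ii)). Thus the proof splits as follows: first establish the key formula
$$\mathrm{g}_{u}=\sum_{k=1}^{p}\mathbb{Z}(B_{k}u)+\sum_{k=1}^{r}2i\pi\mathbb{Z}\,P(e^{(k)}),\qquad(\star)$$
then invoke Theorem~\ref{T:2} for the equivalence (i)$\Leftrightarrow$(iii), and finally use Lemma~\ref{L:13} for (ii)$\Leftrightarrow$(iii).

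To prove $(\star)$ I would first describe $\mathrm{g}$ itself as an additive group. Since $G$ is abelian and finitely generated by $A_{1}=e^{B_{1}},\dots,A_{p}=e^{B_{p}}$, every element of $G$ is a product $A_{1}^{m_{1}}\cdots A_{p}^{m_{p}}$. Provided the chosen $B_{k}\in\mathrm{g}$ commute pairwise, this product equals $\exp(m_{1}B_{1}+\dots+m_{p}B_{p})$, and by Lemma~\ref{L:1} one gets that $B\in\mathrm{g}$ iff $B-\sum_{k}m_{k}B_{k}$ lies in the kernel of $\exp$ restricted to $P\mathcal{K}_{\eta,r}(\mathbb{C})P^{-1}$ for suitable integers $m_{k}$. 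The kernel is computed blockwise: on $\mathbb{T}_{n_{k}}(\mathbb{C})$ a matrix $\mu I_{n_{k}}+N$ with $N$ strictly lower triangular exponentiates to $e^{\mu}\exp(N)$, which equals $I_{n_{k}}$ exactly when $\mu\in 2i\pi\mathbb{Z}$ and $N=0$; hence the kernel is $\sum_{k=1}^{r}2i\pi\mathbb{Z}\,P E_{k}P^{-1}$, where $E_{k}$ is the identity on the $k$-th block and $0$ elsewhere. This gives $\mathrm{g}=\sum_{k}\mathbb{Z}B_{k}+\sum_{k}2i\pi\mathbb{Z}\,P E_{k}P^{-1}$ as additive groups, and evaluating at $u$ yields $(\star)$ once the kernel images $P E_{k}P^{-1}u$ are identified with the vectors $P(e^{(k)})$ coming from the normal form.

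With $(\star)$ in hand the rest is formal. Write $v_{k}=B_{k}u$ for $1\le k\le p$ and $v_{p+k}=2i\pi P(e^{(k)})$ for $1\le k\le r$, so that $\mathrm{g}_{u}=\sum_{j}\mathbb{Z}v_{j}$. By the definition of property $\mathcal{D}(m)$ through the isomorphism $\theta$ and by Lemma~\ref{L:13}, the vectors $v_{1},\dots,v_{p+r}$ satisfy $\mathcal{D}(m)$ precisely when $\sum_{j}\mathbb{Z}v_{j}$ is dense in an $m$-dimensional real subspace, i.e. precisely when $\dim(\overline{\mathrm{g}_{u}})=m$. Combining this with Theorem~\ref{T:2} closes the chain (i)$\Leftrightarrow$(iii) and (ii)$\Leftrightarrow$(iii); note that the equality in (iii) is exactly $(\star)$ and holds unconditionally for finitely generated $G$, so (iii) is just $(\star)$ supplemented by the dimension count.

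The main obstacle is the formula $(\star)$, and within it two points demand care. First, one must be able to choose the logarithms $B_{k}$ so that they commute; commutativity of the $A_{k}$ does not automatically transfer to their logarithms, and one needs the block-triangular structure of $\mathcal{K}_{\eta,r}(\mathbb{C})$ to guarantee a compatible commuting choice. Second, and most delicate, is the identification of the evaluated kernel generators $P E_{k}P^{-1}u$ with $2i\pi P(e^{(k)})$: a priori $P E_{k}P^{-1}u=P\cdot(0,\dots,(P^{-1}u)_{k},\dots,0)$ involves the $k$-th block of $P^{-1}u$ rather than the distinguished vector $e_{k,1}$, so this step relies on the normal-form normalization of $u$ and on discarding the blocks where $P^{-1}u$ vanishes. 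Pinning down this identification, using the earlier structural results on $E(u)$ and the normal form, is where the real work lies; once it is settled, Theorem~\ref{T:2} and Lemma~\ref{L:13} make the three equivalences routine.
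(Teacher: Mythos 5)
Your proposal follows the paper's route exactly in outline: Theorem~\ref{T:2} gives (i)\,$\Leftrightarrow$\,$\dim(\overline{\mathrm{g}_{u}})=m$, and Lemma~\ref{L:13} (which the paper packages as Corollary~\ref{C:7}) converts that dimension count into property $\mathcal{D}(m)$; indeed the paper's entire proof is the one-line citation of Theorem~\ref{T:2}, Corollary~\ref{C:7} and Corollary~\ref{C:17}. The only divergence is that you set out to prove the lattice formula $(\star)$ for $\mathrm{g}_{u}$ yourself, whereas the paper imports it wholesale: Proposition~\ref{p:017} is quoted from \cite{aAh-M05} and Corollary~\ref{C:17} is its transport to arbitrary $u\in U$. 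Your sketch is essentially the proof of that cited result, and of your two flagged difficulties the first is harmless: for logarithms taken inside $P\mathcal{K}_{\eta,r}(\mathbb{C})P^{-1}$, the nilpotent parts are polynomials in the commuting unipotent parts of the $A_{k}$ and the semisimple parts are scalar on each block, so any admissible $B_{1},\dots,B_{p}$ commute automatically. The second difficulty is real, but it is an imprecision of the published statement rather than a defect of your argument: the kernel of $\exp$ contributes at $u$ the vectors $2i\pi PJ_{k}P^{-1}u$, where $J_{k}$ is the identity on the $k$-th block and $0$ elsewhere, and this equals $2i\pi Pe^{(k)}$ only when the $k$-th block of $P^{-1}u$ is $e_{k,1}$; in general it is $2i\pi PQe^{(k)}$ with $Qu_{0}=P^{-1}u$, which is exactly how the paper's own Corollary~\ref{C:17} is phrased. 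Neither you nor the paper treats $u\notin U$ (where one must first restrict to $E(u)$). In short: same approach; you rederived, mostly correctly, the one ingredient the paper cites, and the step you left open is precisely where the statement itself needs repair.
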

\medskip

This paper is organized as follows: In Section $2$,  we give
preliminary results. In Section $3$,  we prove the
Theorem~\ref{T:1}. A   parametrization  of an abelian subgroup of
$\mathcal{K}^{*}_{\eta,r}(\mathbb{C})$ is given in section 4.
    The proof of Theorems ~\ref{T:2}, ~\ref{T:3} and ~\ref{T:4} and Corollaries ~\ref{C:02} and ~\ref{C:2} are done in Section 5.
 Section 6 is devoted to give some examples.
\bigskip

\section{{\bf Preliminary \ results} }
\medskip

We present some results for abelian subgroup of $GL(n,
\mathbb{C})$.

\begin{prop}\label{p:1}$($\cite{WR}, Proposition 7'$)$ Let $A \in M_{n}(\mathbb{C})$. Then if no two eigenvalues of $A$ have
a difference of the form $2ik\pi$ , $k \in Z\backslash\{0\}$, then
$exp: \ M_{n}(\mathbb{C})\longrightarrow GL(n,\mathbb{C})$ is a
local diffeomorphism at $A$.
\end{prop}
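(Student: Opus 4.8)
The plan is to reduce everything to the inverse function theorem: since $\exp$ is given on $M_{n}(\mathbb{C})$ by a convergent power series it is real-analytic (indeed holomorphic), so to conclude that it is a local diffeomorphism at $A$ it suffices to show that its differential $d(\exp)_{A}$ is an invertible linear endomorphism of $M_{n}(\mathbb{C})$. Thus the entire problem becomes the computation of $d(\exp)_{A}$ and the determination of when it is a bijection.

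First I would compute the differential using the integral representation
$$d(\exp)_{A}(H) = \int_{0}^{1} e^{sA}\, H\, e^{(1-s)A}\, ds.$$
Factoring $e^{A}$ out on the left and writing the remaining conjugation as $e^{-(1-s)A} H e^{(1-s)A} = e^{-(1-s)\mathrm{ad}_{A}}(H)$, where $\mathrm{ad}_{A}(X)=AX-XA$, the $s$-integration yields the closed form
$$d(\exp)_{A} = L_{e^{A}}\circ \varphi(\mathrm{ad}_{A}), \qquad \varphi(z)=\frac{1-e^{-z}}{z},$$
with $L_{e^{A}}$ denoting left multiplication by $e^{A}$ and $\varphi$ the entire function satisfying $\varphi(0)=1$. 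Because $e^{A}\in GL(n,\mathbb{C})$ the operator $L_{e^{A}}$ is an isomorphism of $M_{n}(\mathbb{C})$, so $d(\exp)_{A}$ is invertible if and only if $\varphi(\mathrm{ad}_{A})$ is invertible.

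Next I would identify the spectrum of $\mathrm{ad}_{A}$. Triangularizing $A$ shows that the eigenvalues of the endomorphism $\mathrm{ad}_{A}$ of $M_{n}(\mathbb{C})$ are exactly the differences $\lambda_{i}-\lambda_{j}$ of the eigenvalues $\lambda_{1},\dots,\lambda_{n}$ of $A$. By the spectral-mapping property for the entire function $\varphi$, the eigenvalues of $\varphi(\mathrm{ad}_{A})$ are then the numbers $\varphi(\lambda_{i}-\lambda_{j})$, and this operator is invertible precisely when $\varphi(\lambda_{i}-\lambda_{j})\neq 0$ for all $i,j$. Finally one checks that the zeros of $\varphi$ are exactly the points $z=2ik\pi$ with $k\in\mathbb{Z}\setminus\{0\}$, since $\varphi(z)=0$ forces $e^{-z}=1$ with $z\neq 0$. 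Hence $\varphi(\mathrm{ad}_{A})$ — equivalently $d(\exp)_{A}$ — is invertible exactly when no difference $\lambda_{i}-\lambda_{j}$ has the form $2ik\pi$ with $k\neq 0$, which is the hypothesis, and the inverse function theorem finishes the argument. The steps requiring the most care are the justification of the differential formula and the spectral-mapping identity $\mathrm{spec}\big(\varphi(\mathrm{ad}_{A})\big)=\varphi\big(\mathrm{spec}(\mathrm{ad}_{A})\big)$; the latter, however, is immediate once $\mathrm{ad}_{A}$ is put in triangular form.
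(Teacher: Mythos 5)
The paper gives no proof of this proposition at all; it is quoted verbatim from Rossmann (\cite{WR}, Proposition 7'), and the argument there is exactly the one you give. Your proof is correct and complete: the formula $d(\exp)_{A}=L_{e^{A}}\circ\varphi(\mathrm{ad}_{A})$ with $\varphi(z)=(1-e^{-z})/z$, the identification of $\mathrm{spec}(\mathrm{ad}_{A})$ with the differences $\lambda_{i}-\lambda_{j}$, and the observation that the zeros of $\varphi$ are precisely $2ik\pi$, $k\in\mathbb{Z}\setminus\{0\}$ (so the diagonal case $i=j$ is harmless since $\varphi(0)=1$) together reduce the claim to the inverse function theorem, exactly as in the cited source.
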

\medskip

\begin{cor}\label{C:3} The restriction $exp/_{\mathcal{K}_{\eta,r}(\mathbb{C})} :\  \mathcal{K}_{\eta,r}(\mathbb{C})\longrightarrow\mathcal{K}^{*}_{\eta,r}(\mathbb{C})$
 is a local diffeomorphism.
\end{cor}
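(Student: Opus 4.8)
The plan is to verify the hypotheses of the inverse function theorem for the smooth map $\exp$ restricted to the real manifold $\mathcal{K}_{\eta,r}(\mathbb{C})$. First I would record two structural facts. The set $\mathcal{K}_{\eta,r}(\mathbb{C})$ is a subalgebra of $M_{n}(\mathbb{C})$: each block $\mathbb{T}_{n_{i}}(\mathbb{C})$ is closed under product and bracket, since a commutator of lower triangular matrices with constant diagonal is strictly lower triangular, hence again of the form $(1)$ with $\mu=0$, and the direct sum inherits this. Moreover $\exp$ carries $\mathcal{K}_{\eta,r}(\mathbb{C})$ into $\mathcal{K}^{*}_{\eta,r}(\mathbb{C})$: writing a block as $A_{i}=\mu_{i}I+N_{i}$ with $N_{i}$ strictly lower triangular, one has $\exp(A_{i})=e^{\mu_{i}}\exp(N_{i})$, which is lower triangular with constant nonzero diagonal $e^{\mu_{i}}$. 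Since $\mathcal{K}^{*}_{\eta,r}(\mathbb{C})$ is the open subset of $\mathcal{K}_{\eta,r}(\mathbb{C})$ cut out by $\mu_{1}\cdots\mu_{r}\neq 0$, the domain and the codomain are real manifolds of the same dimension, and it suffices to show that the differential of $\exp$ at every $A\in\mathcal{K}_{\eta,r}(\mathbb{C})$ is invertible as a map $\mathcal{K}_{\eta,r}(\mathbb{C})\to T_{\exp(A)}\mathcal{K}^{*}_{\eta,r}(\mathbb{C})$.

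Next I would use the classical formula for the differential of the exponential, $d\exp_{A}=L_{\exp(A)}\circ\Phi(\mathrm{ad}_{A})$, where $L_{g}$ denotes left translation, $\mathrm{ad}_{A}(H)=AH-HA$, and $\Phi(z)=\frac{1-e^{-z}}{z}$. Because $\exp(A)$ is invertible, $L_{\exp(A)}$ is a linear isomorphism, so the question reduces to showing that $\Phi(\mathrm{ad}_{A})$ is invertible on $\mathcal{K}_{\eta,r}(\mathbb{C})$.

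The heart of the argument is to control the spectrum of $\mathrm{ad}_{A}$ restricted to the subalgebra. I would decompose $A=D+N$ with $D=\mathrm{diag}(\mu_{1}I_{n_{1}},\dots,\mu_{r}I_{n_{r}})$ and $N$ block-diagonal strictly lower triangular. Since $D$ acts as the scalar $\mu_{i}$ on the $i$-th block, it is central in $\mathcal{K}_{\eta,r}(\mathbb{C})$, so $\mathrm{ad}_{A}=\mathrm{ad}_{N}$ there. As $N$ is nilpotent, $\mathrm{ad}_{N}=L_{N}-R_{N}$ is a difference of commuting nilpotent operators and is therefore nilpotent; hence $\mathrm{ad}_{A}$ preserves $\mathcal{K}_{\eta,r}(\mathbb{C})$ and its only eigenvalue on this space is $0$. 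Consequently $\Phi(\mathrm{ad}_{A})=I+(\text{nilpotent})$, because $\Phi$ is analytic near $0$ with $\Phi(0)=1$, and is thus invertible. This is exactly the restricted, block-diagonal incarnation of the non-resonance condition of Proposition~\ref{p:1}: the eigenvalue differences that could obstruct invertibility are the cross-block quantities $\mu_{i}-\mu_{j}$, but these never occur as eigenvalues of $\mathrm{ad}_{A}$ on the subalgebra, which only sees the within-block difference $0$, and $0\neq 2ik\pi$ for $k\neq 0$.

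I expect the main obstacle to be precisely this last point: Proposition~\ref{p:1} cannot be invoked directly at an arbitrary $A\in\mathcal{K}_{\eta,r}(\mathbb{C})$, because two diagonal blocks may well satisfy $\mu_{i}-\mu_{j}\in 2i\pi\mathbb{Z}\setminus\{0\}$, so $\exp$ need not be a local diffeomorphism of $M_{n}(\mathbb{C})$ at $A$. The resolution, namely showing that $\mathrm{ad}_{A}$ is nilpotent on the subalgebra so that only the harmless eigenvalue $0$ survives, is what makes the restricted map a local diffeomorphism, and the conclusion then follows from the inverse function theorem.
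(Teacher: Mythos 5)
Your proof is correct, but it takes a genuinely different route from the paper's. The paper's proof is two lines: it factors the restricted exponential as $\exp_{/\mathcal{K}_{\eta,r}(\mathbb{C})}=\exp_{/\mathbb{T}_{n_{1}}(\mathbb{C})}\oplus\dots\oplus \exp_{/\mathbb{T}_{n_{r}}(\mathbb{C})}$ and applies Proposition~\ref{p:1} block by block; since every matrix in $\mathbb{T}_{n_{i}}(\mathbb{C})$ has a single eigenvalue $\mu_{i}$, the only eigenvalue difference within a block is $0$, so the non-resonance hypothesis of Proposition~\ref{p:1} holds automatically and each block map is a local diffeomorphism. You instead give a self-contained argument: you verify that $\mathcal{K}_{\eta,r}(\mathbb{C})$ is a subalgebra preserved by $\exp$, invoke the formula $d\exp_{A}=L_{\exp(A)}\circ\Phi(\mathrm{ad}_{A})$, and show $\mathrm{ad}_{A}$ is nilpotent on the subalgebra (the diagonal part $D$ being central there), so $\Phi(\mathrm{ad}_{A})$ is unipotent and the inverse function theorem applies. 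Both arguments are sound. Your version buys independence from the cited result and makes explicit the point that the cross-block resonances $\mu_{i}-\mu_{j}\in 2i\pi\mathbb{Z}\setminus\{0\}$, which could obstruct local injectivity of $\exp$ on all of $M_{n}(\mathbb{C})$, are invisible to the block-diagonal subalgebra; the paper's version is shorter precisely because the blockwise factorization reduces everything to the single-eigenvalue case where Proposition~\ref{p:1} is immediate, so it never has to confront those resonances at all.
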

\bigskip

\begin{proof} The proof results from Proposition ~\ref{p:1} and the fact that
$exp_{/\mathcal{K}_{\eta,r}(\mathbb{C})}=exp_{/\mathbb{T}_{n_{1}}(\mathbb{C})}\oplus\dots\oplus
exp_{/\mathbb{T}_{n_{r}}(\mathbb{C})}$.
\end{proof}

\begin{prop}\label{p:2}$($\cite{mW},Theorem 2.1$)$ Let $H$ be a discrete additive subgroup of   $\mathbb{C}^{n}$.
Then there exist a basis $(u_{1},\dots,u_{n})$   of
$\mathbb{C}^{n}$  and  $1\leq r \leq n$  such that
$H=\underset{k=1}{\overset{r}{\sum}}\mathbb{Z}u_{k}$.
\end{prop}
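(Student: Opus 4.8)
The plan is to reduce everything to the real structure via the isomorphism $\theta:\mathbb{C}^{n}\To\mathbb{R}^{2n}$ already at hand. Under $\theta$ the subgroup $H$ is carried to a discrete additive subgroup $\theta(H)$ of the real vector space $\mathbb{R}^{2n}$, so it suffices to prove the structure statement for a discrete additive subgroup of a finite dimensional real vector space $V$ and then transport it back by $\theta^{-1}$. First I would put $W=\mathrm{span}_{\mathbb{R}}(\theta(H))$ and $r=\mathrm{dim}_{\mathbb{R}}(W)$; then $\theta(H)$ is a discrete subgroup of $V$ that spans $W$ over $\mathbb{R}$, and (assuming $H\neq\{0\}$) $r\geq 1$. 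The problem is thereby reduced to showing that such a subgroup is a full lattice of $W$ possessing a $\mathbb{Z}$-basis which is $\mathbb{R}$-linearly independent and extends to a basis of the ambient space.

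Next I would show that $\theta(H)$ is free abelian of rank $r$ and construct an adapted basis. Choosing a maximal $\mathbb{R}$-linearly independent family $h_{1},\dots,h_{r}$ in $\theta(H)$ and setting $L_{0}=\sum_{k=1}^{r}\mathbb{Z}h_{k}$, the lattice $L_{0}$ has a compact fundamental parallelotope in $W$; since $\theta(H)$ is discrete, only finitely many of its cosets modulo $L_{0}$ meet that parallelotope, whence the index $[\theta(H):L_{0}]$ is finite. Thus $\theta(H)$ is finitely generated and, being torsion free as a subgroup of a real vector space, is free of rank $r$. To make the generators $\mathbb{R}$-linearly independent I would induct on $r$: discreteness provides a nonzero vector $u_{1}\in\theta(H)$ of minimal norm in a chosen direction, one verifies $\theta(H)\cap\mathbb{R}u_{1}=\mathbb{Z}u_{1}$, the image of $\theta(H)$ in $W/\mathbb{R}u_{1}$ is again discrete, and a basis obtained there lifts to a $\mathbb{Z}$-basis $u_{1},\dots,u_{r}$ of $\theta(H)$ that is $\mathbb{R}$-linearly independent. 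Completing this family to an $\mathbb{R}$-basis $u_{1},\dots,u_{2n}$ of $V$ yields $\theta(H)=\sum_{k=1}^{r}\mathbb{Z}u_{k}$ with $r\leq 2n$ and with the $u_{k}$ part of a basis.

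The main obstacle is reconciling this with the statement as worded, which asks for $r\leq n$ generators extending to a $\mathbb{C}$-basis of $\mathbb{C}^{n}$: the argument above intrinsically delivers $\mathbb{R}$-linear independence and the bound $r\leq 2n$, never $\mathbb{C}$-linear independence nor $r\leq n$. That the complex reading cannot hold literally is shown already by $H=\mathbb{Z}+\mathbb{Z}i\subset\mathbb{C}$ (here $n=1$), which has real rank $2$ and is not $\mathbb{Z}u_{1}$ for any single vector $u_{1}$. I would therefore interpret the statement through $\theta$, taking the ambient dimension to be the real dimension $2n$: pulling $u_{1},\dots,u_{2n}$ back by $\theta^{-1}$ exhibits $H=\sum_{k=1}^{r}\mathbb{Z}\theta^{-1}(u_{k})$ with $\mathbb{R}$-linearly independent generators forming part of a real basis of $\mathbb{C}^{n}$, and this is the sense in which the result is used in the sequel. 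Hence the only genuinely delicate points are the finite-index argument and the adapted-basis induction of the previous paragraph; the rest is the dictionary furnished by $\theta$.
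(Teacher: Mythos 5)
The paper contains no proof for you to be compared against: Proposition~\ref{p:2} is imported verbatim from \cite{mW} (Theorem 2.1) and used as a black box. Judged on its own, your argument is the standard proof of the lattice structure theorem and is sound: reduction by $\theta$ (a real-linear homeomorphism, so discreteness is preserved both ways), finiteness of the index $[\theta(H):L_{0}]$ via a compact fundamental parallelotope --- note you implicitly use that \emph{every} coset meets the parallelotope, which holds because the $L_{0}$-translates of it cover $W$ and the maximality of $h_{1},\dots,h_{r}$ forces $\theta(H)\subset W$ --- then freeness from finite generation plus torsion-freeness, and the induction producing an $\mathbb{R}$-independent $\mathbb{Z}$-basis. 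The two steps you leave as ``one verifies'' ($\theta(H)\cap\mathbb{R}u_{1}=\mathbb{Z}u_{1}$, and discreteness of the projected subgroup in $W/\mathbb{R}u_{1}$) are precisely the delicate ones, and both go through by the usual argument of translating lifts by integer multiples of $u_{1}$ into a bounded region and invoking discreteness. Equally valuable is your diagnosis of the statement itself: as printed, with $(u_{1},\dots,u_{n})$ a $\mathbb{C}$-basis of $\mathbb{C}^{n}$ and $r\le n$, it is false, and $\mathbb{Z}+\mathbb{Z}i\subset\mathbb{C}$ already refutes it for $n=1$; the correct reading is the real one you prove, with at most $2n$ generators that are $\mathbb{R}$-linearly independent and extend to a real basis of $\mathbb{C}^{n}\simeq\mathbb{R}^{2n}$. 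That reading is confirmed by the paper's own use of the result: in the proof of Corollary~\ref{C:4} the proposition is applied to the \emph{real} subspace $W$ furnished by Proposition~\ref{p:3}, producing a real basis $(u_{1},\dots,u_{p})$ of $W$ with $F\cap W=\mathbb{Z}u_{1}\oplus\dots\oplus\mathbb{Z}u_{s}$, exactly the sense your $\theta$-dictionary supplies. So: no gap in your proof beyond the routine verifications you flag, and your correction of the statement is the right one for the way the proposition is actually invoked in the sequel.
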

\medskip

\begin{prop}\label{p:3}$($\cite{mW}, Theorem 3.1$)$ Let $F$ be a closed additive subgroup of   $\mathbb{C}^{n}$.
Then there exist a vector subspace $V$ of $\mathbb{C}^{n}$, over
$\mathbb{R}$ contained in $F$ and a vector subspace $W$ of
$\mathbb{C}^{n}$, over $\mathbb{R}$, such that:
\begin{itemize}
  \item[(i)]  $W\oplus V=\mathbb{C}^{n}$.
  \item [(ii)] $F\cap W$ is a discrete subgroup of  \  \ $\mathbb{C}^{n}$
  and $F=(F\cap W)\oplus V$.
 \end{itemize}
\end{prop}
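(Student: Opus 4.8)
The plan is to recognize this as the classical structure theorem for closed subgroups of a finite-dimensional real vector space, applied to $\mathbb{C}^{n}$ regarded as a $2n$-dimensional space over $\mathbb{R}$. First I would let $V$ be the union of all real vector subspaces of $\mathbb{C}^{n}$ that are contained in $F$. Because $F$ is an additive group, the sum of two subspaces lying in $F$ again lies in $F$, so this family is stable under sums; since dimensions are bounded by $2n$, it has a largest member, and therefore $V$ is itself a real vector subspace with $V\subset F$ containing every subspace contained in $F$. This $V$ will be the subspace in the statement, and the work is to produce a complement $W$ with the asserted properties.

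The heart of the argument is to show that the image $\pi(F)$ under the quotient map $\pi:\mathbb{C}^{n}\to \mathbb{C}^{n}/V$ is a \emph{discrete} subgroup. Since $V\subset F$, the set $F$ is a union of cosets of $V$, so $\pi(F)=F/V$ is closed in $\mathbb{C}^{n}/V$: the $\pi$-image of a $V$-saturated closed set is closed, precisely because a limit of points $x_{m}+v_{m}\in F$ (with $v_{m}\in V\subset F$) stays in the closed set $F$. By maximality of $V$ the group $\pi(F)$ contains no line: if $\mathbb{R}\overline{\ell}\subset\pi(F)$, then lifting and using $V\subset F$ shows $\mathbb{R}\ell+V\subset F$, a strictly larger subspace unless $\ell\in V$. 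I would then invoke the sub-lemma that a closed subgroup of a real vector space containing no line is discrete: if $0$ were a non-isolated point, pick $x_{m}\to 0$ with $x_{m}\neq0$ in the group, pass to a limit direction $u$ of $x_{m}/\|x_{m}\|$, and observe that $\lfloor t/\|x_{m}\|\rfloor\, x_{m}\to tu$ forces the whole line $\mathbb{R}u$ into the closed group, a contradiction. Hence $\pi(F)$ is discrete.

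Now the real analogue of Proposition~\ref{p:2} gives $\pi(F)=\sum_{j=1}^{r}\mathbb{Z}\,\overline{w}_{j}$ with $\overline{w}_{1},\dots,\overline{w}_{r}$ linearly independent over $\mathbb{R}$. Lifting each $\overline{w}_{j}$ to some $w_{j}\in F$ yields independent vectors spanning a subspace $S$ with $S\cap V=\{0\}$ and $\pi|_{S}$ injective; I would take $W$ to be any real complement of $V$ containing $S$, which immediately gives $(\mathrm{i})$. For $(\mathrm{ii})$, set $\Lambda=\sum_{j=1}^{r}\mathbb{Z}w_{j}\subset F\cap W$. If $x\in F\cap W$, write $\pi(x)=\sum n_{j}\overline{w}_{j}$, so $x-\sum n_{j}w_{j}\in F\cap\ker\pi=F\cap V$; but it also lies in $W$, hence in $W\cap V=\{0\}$, so $x\in\Lambda$ and $F\cap W=\Lambda$ is discrete. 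Finally any $f\in F$ has $\pi(f)\in\pi(F)$, so $f=\lambda+v$ with $\lambda\in\Lambda\subset F\cap W$ and $v\in V\subset F$, and $V\cap W=\{0\}$ makes this sum direct, giving $F=(F\cap W)\oplus V$.

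I expect the main obstacle to be the discreteness step: the sub-lemma that a closed line-free subgroup is discrete, together with the verification that $\pi(F)$ is closed and line-free, since everything afterward is linear bookkeeping. The closedness of $\pi(F)$ rests essentially on $V\subset F$ (so that $F$ is $V$-saturated), which is exactly why $V$ must be chosen as large as possible.
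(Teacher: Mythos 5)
Your argument is correct and complete. Note, however, that the paper offers no proof of this proposition at all: it is quoted verbatim from the cited reference (Waldschmidt, Th\'eor\`eme 3.1) as a known structure theorem for closed subgroups of a finite-dimensional real vector space, so there is nothing internal to compare against. What you have written is precisely the standard proof from that source: take $V$ maximal among real subspaces contained in $F$ (well defined because the family is stable under sums and of bounded dimension), show the image in $\mathbb{C}^{n}/V$ is a closed subgroup containing no line and hence discrete, lift a $\mathbb{Z}$-basis, and complete to a complement $W$. The only step I would tighten is the closedness of $\pi(F)$: rather than the informal remark about limits of $x_{m}+v_{m}$, fix any linear section $\sigma$ of $\pi$ onto a complement of $V$; if $\pi(f_{m})\to \bar y$ then $\sigma(\pi(f_{m}))\in F+V=F$ converges to $\sigma(\bar y)$, which lies in $F$ by closedness, so $\bar y\in\pi(F)$. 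Everything else, including the sub-lemma that a closed line-free subgroup is discrete via $\lfloor t/\|x_{m}\|\rfloor x_{m}\to tu$, is the classical argument and is sound.
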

\medskip

For any closed additive subgroup $F$ of $\mathbb{C}^{n}$, we have
\  dim$(F)=\mathrm{dim}(V)$.

\begin{cor}\label{C:4} $($Under above notations$)$ For every $y\in F\cap W$, there exist an open subset $O_{y}$ of  $\mathbb{C}^{n}$ \
such that $O_{y}\cap F=\{y\}+V$. In particular $O_{0}\cap F=V$.
\end{cor}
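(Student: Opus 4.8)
The plan is to exploit the direct sum decomposition $\mathbb{C}^{n}=W\oplus V$ furnished by Proposition~\ref{p:3}, together with the discreteness of $F\cap W$, and to build $O_{y}$ as a \emph{cylinder} over a small neighborhood of $y$ in $W$ rather than as a metric ball. The guiding observation is that, since $V$ may be unbounded, no ball centered at $y$ can contain all of $\{y\}+V$; the neighborhood we seek must therefore be saturated in the $V$-direction, i.e.\ invariant under translation by $V$. This is exactly what taking a preimage under the projection onto $W$ achieves.

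First I would introduce the $\mathbb{R}$-linear projection $p:\mathbb{C}^{n}\longrightarrow W$ associated with the decomposition $\mathbb{C}^{n}=W\oplus V$, namely $p(w+v)=w$ for $w\in W$ and $v\in V$; being linear on a finite-dimensional space it is continuous. By Proposition~\ref{p:3}(ii) the set $F\cap W$ is a discrete subgroup of $\mathbb{C}^{n}$, so the point $y\in F\cap W$ is isolated in $F\cap W$, and hence there is an open subset $\Omega$ of $W$ with $\Omega\cap(F\cap W)=\{y\}$. I then set $O_{y}=p^{-1}(\Omega)$, which is open in $\mathbb{C}^{n}$ by continuity of $p$ and contains $y$ because $p(y)=y$ (as $y\in W$). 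Note that $O_{y}=\Omega+V$ is precisely the cylinder described above.

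It remains to check $O_{y}\cap F=\{y\}+V$. For the inclusion $\{y\}+V\subset O_{y}\cap F$, any vector $y+v$ with $v\in V$ lies in $F$ since $y\in F$ and $V\subset F$, and it satisfies $p(y+v)=y\in\Omega$, hence lies in $O_{y}$. Conversely, let $x\in O_{y}\cap F$; by Proposition~\ref{p:3}(ii) we may write $x=h+v$ uniquely with $h\in F\cap W$ and $v\in V$, so that $p(x)=h$. Membership $x\in O_{y}$ forces $h\in\Omega\cap(F\cap W)=\{y\}$, whence $h=y$ and $x=y+v\in\{y\}+V$. Taking $y=0$ gives the special case $O_{0}\cap F=V$. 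The only genuinely delicate point is the very choice of $O_{y}$: one must resist the temptation to use a ball and instead take the full preimage $p^{-1}(\Omega)$, after which the verification is routine bookkeeping with the splitting $F=(F\cap W)\oplus V$.
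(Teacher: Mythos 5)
Your proof is correct and rests on the same geometric idea as the paper's: take a $V$-saturated cylinder $\Omega+V$ over a neighborhood $\Omega$ of $y$ in $W$ that isolates $y$ inside the discrete set $F\cap W$, rather than a metric ball. The only difference is in how $\Omega$ is produced: the paper invokes Proposition~\ref{p:2} to write $F\cap W=\mathbb{Z}u_{1}\oplus\dots\oplus\mathbb{Z}u_{s}$ and takes an explicit open box $\left]m_{1}-\frac{1}{2},m_{1}+\frac{1}{2}\right[u_{1}\oplus\dots\oplus\left]m_{p}-\frac{1}{2},m_{p}+\frac{1}{2}\right[u_{p}$, whereas you obtain $\Omega$ abstractly from the definition of discreteness and package the cylinder as $p^{-1}(\Omega)$ for the projection $p$ along $V$; your version is marginally more economical, needing neither Proposition~\ref{p:2} nor the coordinates $m_{j}$, and the verification $O_{y}\cap F=\{y\}+V$ via the unique splitting $x=h+v$ is complete and sound.
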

\smallskip

\begin{proof} We have $F=(F\cap W)\oplus V$,
where $F\cap W$ is a discrete subgroup of $W$. By Proposition
~\ref{p:2} there exists a basis $(u_{1}, \dots, u_{p})$ of $W$
and  $1\leq s \leq p$ such that $F\cap
W=\mathbb{Z}u_{1}\oplus\dots\oplus\mathbb{Z}u_{s}$. Let
$y=m_{1}u_{1}+  \ \dots \ + m_{s}u_{s}\in F\cap W$, take
$$O_{y}=\left]m_{1}-\frac{1}{2},\
m_{1}+\frac{1}{2}\right[u_{1}\oplus\dots\oplus\left]m_{p}-\frac{1}{2},
\ \frac{1}{2}+m_{p}\right[u_{p}\oplus V.$$ It follows  that
$O_{y}$ is an open subset of $\mathbb{C}^{n}$ such that $O_{y}\cap
F=V+\{y\}$. The proof is complete.
\end{proof}

\bigskip

\begin{prop}\label{p:4}  For every $u\in\mathbb{C}^{n}$,  there exist two vector
subspaces  $V$ and $W$  of   $\mathbb{C}^{n}$ over $\mathbb{R}$,
such that $V$ is contained in  $\overline{\mathrm{g}_{u}}$  and
 $W\oplus V=\mathbb{C}^{n}$, satisfying:

\begin{itemize}
  \item[i)]  $\overline{\mathrm{g}_{u}}=\left(\overline{\mathrm{g}_{u}}\cap W\right)\oplus V$,
with \ $\overline{\mathrm{g}_{u}}\cap W$ is discrete.
  \item [ii)] For every $\lambda\in \mathbb{R}$, we have
$\overline{\mathrm{g}_{\lambda
u}}=\left(\overline{\mathrm{g}_{\lambda u}}\cap W\right)\oplus V$,
with  $\overline{\mathrm{g}_{\lambda u}}\cap W$ is discrete
subgroup of $W$.
 \end{itemize}
\end{prop}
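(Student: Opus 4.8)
The plan is to reduce everything to Proposition~\ref{p:3} applied to the closed set $\overline{\mathrm{g}_{u}}$, after first checking that $\mathrm{g}_{u}$ is an additive subgroup of $\mathbb{C}^{n}$. The first step is to verify that $\mathrm{g}$ is itself an additive subgroup of the matrix algebra $\mathcal{A}=P\mathcal{K}_{\eta,r}(\mathbb{C})P^{-1}$. The matrices in $\mathcal{K}_{\eta,r}(\mathbb{C})$, being block-diagonal with lower-triangular Toeplitz blocks of the form $(1)$, pairwise commute, so the same holds after conjugation inside $\mathcal{A}$. Hence for $B_{1},B_{2}\in\mathrm{g}$ one has $e^{B_{1}+B_{2}}=e^{B_{1}}e^{B_{2}}\in G$ while $B_{1}+B_{2}\in\mathcal{A}$ (a vector space), so $B_{1}+B_{2}\in\mathrm{g}$; likewise $-B_{1}\in\mathrm{g}$ and $0\in\mathrm{g}$. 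Since $B\mapsto Bu$ is $\mathbb{R}$-linear, $\mathrm{g}_{u}=\{Bu:\ B\in\mathrm{g}\}$ is the image of an additive subgroup under a linear map, hence an additive subgroup of $\mathbb{C}^{n}$, and its closure $\overline{\mathrm{g}_{u}}$ is a closed additive subgroup.

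Applying Proposition~\ref{p:3} to $F=\overline{\mathrm{g}_{u}}$ then produces real vector subspaces $V\subset\overline{\mathrm{g}_{u}}$ and $W$ with $W\oplus V=\mathbb{C}^{n}$, with $\overline{\mathrm{g}_{u}}\cap W$ discrete and $\overline{\mathrm{g}_{u}}=(\overline{\mathrm{g}_{u}}\cap W)\oplus V$. This is exactly assertion (i), and it fixes the pair $(V,W)$ once and for all.

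For (ii) the key observation is the homogeneity identity $\mathrm{g}_{\lambda u}=\lambda\,\mathrm{g}_{u}$, which holds because every $B\in\mathrm{g}$ is linear, so $B(\lambda u)=\lambda(Bu)$. For $\lambda\in\mathbb{R}^{*}$ the map $x\mapsto\lambda x$ is a homeomorphism of $\mathbb{C}^{n}$, whence $\overline{\mathrm{g}_{\lambda u}}=\lambda\,\overline{\mathrm{g}_{u}}=\lambda\big[(\overline{\mathrm{g}_{u}}\cap W)\oplus V\big]$. Since $V$ and $W$ are real subspaces and $\lambda$ is a nonzero real, $\lambda V=V$ and $\lambda W=W$; therefore $\lambda(\overline{\mathrm{g}_{u}}\cap W)=(\lambda\overline{\mathrm{g}_{u}})\cap W=\overline{\mathrm{g}_{\lambda u}}\cap W$, and this set is discrete, being the image of a discrete set under a homeomorphism. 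This gives $\overline{\mathrm{g}_{\lambda u}}=(\overline{\mathrm{g}_{\lambda u}}\cap W)\oplus V$ with $\overline{\mathrm{g}_{\lambda u}}\cap W$ discrete, for the very same $V,W$ as in (i).

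I expect the substance of the proposition to lie precisely in this uniformity in $\lambda$: part (i) is a direct appeal to Proposition~\ref{p:3}, whereas the content of (ii) is that one single splitting $W\oplus V$ serves the entire real ray $\{\lambda u:\ \lambda\in\mathbb{R}\}$, and this is forced by the homogeneity $\mathrm{g}_{\lambda u}=\lambda\,\mathrm{g}_{u}$ together with the scaling-invariance of the real subspaces $V$ and $W$. The only degenerate value is $\lambda=0$, for which $\mathrm{g}_{0}=\{0\}$ falls outside the scaling argument; this case should be read together with the standing hypothesis $u\neq 0$ used elsewhere, and the nontrivial claim concerns $\lambda\neq 0$.
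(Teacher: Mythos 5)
Your overall route is the same as the paper's: establish that $\mathrm{g}_{u}$ is an additive subgroup, apply Proposition \ref{p:3} to the closed subgroup $\overline{\mathrm{g}_{u}}$ to obtain (i), and deduce (ii) from the homogeneity $\mathrm{g}_{\lambda u}=\lambda\,\mathrm{g}_{u}$ together with $\lambda V=V$ and $\lambda W=W$ for real $\lambda\neq 0$. That part of your argument is fine, and you are in fact more careful than the paper about the degenerate value $\lambda=0$, where the scaling identities break down.

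However, your justification of the first step contains a genuine error. You claim that the matrices in $\mathcal{K}_{\eta,r}(\mathbb{C})$ pairwise commute because they are block-diagonal with lower-triangular blocks of the form $(1)$. This is false as soon as some block size $n_{i}\geq 3$: inside $\mathbb{T}_{3}(\mathbb{C})$ the nilpotent parts $N_{1}=E_{2,1}$ and $N_{2}=E_{3,2}$ satisfy $N_{1}N_{2}=0$ but $N_{2}N_{1}=E_{3,1}\neq 0$ (the subdiagonal entries of a matrix of the form $(1)$ are arbitrary, not constant along diagonals, so these blocks are not Toeplitz). Hence $\mathcal{K}_{\eta,r}(\mathbb{C})$ is not a commutative algebra, and the identity $e^{B_{1}+B_{2}}=e^{B_{1}}e^{B_{2}}$ does not follow from membership in $\mathcal{K}_{\eta,r}(\mathbb{C})$ alone. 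The true reason elements of $\mathrm{g}$ commute is that $G$ itself is abelian: writing $B=\mu I+N$ with $N$ strictly lower triangular, one recovers $N$ as a polynomial in $e^{B}$, so commutativity of $e^{B_{1}}$ and $e^{B_{2}}$ in $G$ forces commutativity of $B_{1}$ and $B_{2}$. This is precisely the content of Lemma \ref{L:1}.(i), which the paper simply cites at this point; replacing your commutativity claim by an appeal to that lemma repairs the argument, and the remainder of your proof then coincides with the paper's.
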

\smallskip

The proof uses the following lemma.

\begin{lem}\label{L:1}$($\cite{aAh-M05}, Lemmas 4.1 and  4.2$)$
\begin{itemize}
  \item [(i)] If $G$ is an abelian subgroup of $GL(n, \mathbb{C})$ then for every $u\in\mathbb{C}^{n}$, $\mathrm{g}_{u}$ is
an additive subgroup of $\mathbb{C}^{n}$.
  \item [(ii)] $exp(\mathrm{g})=G$.
  \end{itemize}
\end{lem}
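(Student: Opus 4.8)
The plan is to prove the two assertions separately, handling (ii) first, since the commutativity phenomenon that settles (i) is cleanest to isolate afterward. Throughout I would work with the normal form $\widetilde{G}=P^{-1}GP\subset\mathcal{K}^{*}_{\eta,r}(\mathbb{C})$ and conjugate back by $P$ at the end; because $e^{PXP^{-1}}=Pe^{X}P^{-1}$, conjugation commutes with $\exp$, so every computation may be carried out inside $\mathcal{K}_{\eta,r}(\mathbb{C})$.

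For (ii), the inclusion $\exp(\mathrm{g})\subset G$ is immediate from the definition: if $B\in\mathrm{g}$ then $B\in\exp^{-1}(G)$, that is $e^{B}\in G$. The content is the reverse inclusion $G\subset\exp(\mathrm{g})$, which I would reduce to the claim that $\exp:\mathcal{K}_{\eta,r}(\mathbb{C})\to\mathcal{K}^{*}_{\eta,r}(\mathbb{C})$ is surjective. Granting this, any $A\in G$ gives $\widetilde{A}=P^{-1}AP\in\mathcal{K}^{*}_{\eta,r}(\mathbb{C})$, hence $\widetilde{A}=e^{\widetilde{B}}$ for some $\widetilde{B}\in\mathcal{K}_{\eta,r}(\mathbb{C})$; setting $B=P\widetilde{B}P^{-1}$ we get $B\in P\mathcal{K}_{\eta,r}(\mathbb{C})P^{-1}$ and $e^{B}=A\in G$, so $B\in\mathrm{g}$ and $A\in\exp(\mathrm{g})$.

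The surjectivity is checked block by block, since $\mathcal{K}_{\eta,r}(\mathbb{C})=\mathbb{T}_{n_{1}}(\mathbb{C})\oplus\dots\oplus\mathbb{T}_{n_{r}}(\mathbb{C})$ and $\exp$ respects this decomposition (as in Corollary~\ref{C:3}). Fix a block and take $M\in\mathbb{T}_{m}^{\ast}(\mathbb{C})$, so $M=\mu I+N$ with $\mu\neq0$ and $N$ strictly lower triangular, hence nilpotent. Writing $M=\mu(I+\mu^{-1}N)$, I would set $B=(\log\mu)\,I+\log(I+\mu^{-1}N)$, where $\log\mu$ is any fixed branch of the logarithm and $\log(I+\mu^{-1}N)=\sum_{k\geq1}\frac{(-1)^{k-1}}{k}(\mu^{-1}N)^{k}$ is a finite sum because $\mu^{-1}N$ is nilpotent. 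Each power $(\mu^{-1}N)^{k}$ with $k\geq1$ is strictly lower triangular, so $B$ has constant diagonal $\log\mu$ with strictly lower triangular remainder, i.e. $B\in\mathbb{T}_{m}(\mathbb{C})$. Since $(\log\mu)I$ is central, $e^{B}=\mu\,e^{\log(I+\mu^{-1}N)}=\mu(I+\mu^{-1}N)=M$, the middle equality holding because $\exp$ and $\log$ are inverse polynomial maps on nilpotents. This yields surjectivity and hence (ii).

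For (i), since $B\mapsto Bu$ is linear, $\mathrm{g}_{u}$ is the image of $\mathrm{g}$ under an additive homomorphism, so it suffices to show $\mathrm{g}$ is an additive subgroup of $P\mathcal{K}_{\eta,r}(\mathbb{C})P^{-1}$. Clearly $0\in\mathrm{g}$, since $e^{0}=I\in G$, and if $B\in\mathrm{g}$ then $e^{-B}=(e^{B})^{-1}\in G$ gives $-B\in\mathrm{g}$. The delicate point, and the main obstacle, is closure under addition: for $B,B'\in\mathrm{g}$ I need $e^{B+B'}\in G$, which holds as soon as $B$ and $B'$ commute, for then $e^{B+B'}=e^{B}e^{B'}\in G$ while $B+B'\in P\mathcal{K}_{\eta,r}(\mathbb{C})P^{-1}$ since that set is a vector space. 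To establish commutativity I would pass to the normal form, writing $\widetilde{B}=P^{-1}BP$ and $\widetilde{B}'=P^{-1}B'P$ in $\mathcal{K}_{\eta,r}(\mathbb{C})$, so that $e^{\widetilde{B}},e^{\widetilde{B}'}\in\widetilde{G}$ commute, and argue block by block. In each block $\widetilde{B}=\beta I+N$ and $\widetilde{B}'=\beta'I+N'$ with $N,N'$ nilpotent and the scalar parts central; then $e^{\widetilde{B}}=e^{\beta}U$ and $e^{\widetilde{B}'}=e^{\beta'}U'$ with $U=e^{N}$, $U'=e^{N'}$ unipotent. Cancelling the central scalars in $e^{\widetilde{B}}e^{\widetilde{B}'}=e^{\widetilde{B}'}e^{\widetilde{B}}$ gives $UU'=U'U$; since $N=\log U$ and $N'=\log U'$ are polynomials in $U-I$ and $U'-I$, commuting unipotents have commuting logarithms, whence $[N,N']=0$ and $[\widetilde{B},\widetilde{B}']=[N,N']=0$. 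Assembling the blocks gives $[\widetilde{B},\widetilde{B}']=0$, hence $[B,B']=0$, which completes the proof of (i).
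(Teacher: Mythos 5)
Your proof is correct. Note that the paper itself offers no argument for this lemma: it is imported verbatim from \cite{aAh-M05} (Lemmas 4.1 and 4.2), so there is no internal proof to compare against, and what you have produced is a self-contained reconstruction. Both halves check out. For (ii), the conjugation reduction is legitimate since $e^{PXP^{-1}}=Pe^{X}P^{-1}$, and your blockwise surjectivity of $\exp:\mathbb{T}_{m}(\mathbb{C})\to\mathbb{T}_{m}^{\ast}(\mathbb{C})$ via $B=(\log\mu)I+\log(I+\mu^{-1}N)$ is sound: the logarithm series terminates by nilpotency, $B$ visibly lies in $\mathbb{T}_{m}(\mathbb{C})$, and $e^{\log(I+X)}=I+X$ holds for nilpotent $X$ as a truncated formal power series identity. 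For (i), you correctly isolated the only nontrivial point, closure of $\mathrm{g}$ under addition, and your resolution is the right one: within a block the scalar part is central, commuting exponentials force the unipotent factors $U,U'$ to commute, and since $N=\log U$ and $N'=\log U'$ are polynomials in $U-I$ and $U'-I$ (finite series), $[N,N']=0$, hence $[\widetilde{B},\widetilde{B}']=0$ and $e^{B+B'}=e^{B}e^{B'}\in G$, while $B+B'$ stays in the vector space $P\mathcal{K}_{\eta,r}(\mathbb{C})P^{-1}$. This is precisely where the constant-diagonal structure of $\mathbb{T}_{m}(\mathbb{C})$ is essential: in general two matrices can fail to commute while their exponentials commute (eigenvalue differences in $2i\pi\mathbb{Z}$, compare Proposition~\ref{p:1}), and your argument correctly exploits that in $\mathcal{K}_{\eta,r}(\mathbb{C})$ the ambiguity of the logarithm is confined to the central scalar part, so the nilpotent parts are uniquely determined and inherit commutativity. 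Your route through the normal form of Proposition~\ref{p:8} is the natural one and is presumably close in spirit to the proof in the cited reference.
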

\medskip

\begin{proof} [Proof of Proposition ~\ref{p:4}.] The proof of (i) results from Lemma ~\ref{L:1}.(i) and Proposition ~\ref{p:3}.\
\\
The proof of (ii)  follows from the fact that $\mathrm{g}_{\lambda
u}=\lambda \mathrm{g}_{u}$, $\lambda W=W$ and $\lambda V=V$, for
every $\lambda\in\mathbb{R}$.
\end{proof}
\smallskip

Let the fundamental result proved in \cite{aAh-M05}:

\begin{prop}\label{p:8}$($\cite{aAh-M05}, Proposition $2.3)$ Let  $G$ be an abelian
subgroup of  $GL(n, \mathbb{C})$. Then there exists $P\in GL(n,
\mathbb{C})$ such that\ $P^{-1}GP$  is an abelian subgroup of \
$\mathcal{K}^{*}_{\eta,r}(\mathbb{C})$, for some
$r\in\{1,\dots,n\}$ and $\eta\in(\mathbb{N}^{*})^{r}$.
\end{prop}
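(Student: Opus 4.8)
The plan is to reduce the statement to the structure theory of the commutative algebra generated by $G$, and then to combine a simultaneous generalized-eigenspace decomposition with a simultaneous triangularization. First I would form the subalgebra $\mathcal{A}\subset M_{n}(\mathbb{C})$ generated by $G$ together with the identity. Since $M_{n}(\mathbb{C})$ is finite dimensional, $\mathcal{A}$ is a finite-dimensional commutative (hence Artinian) $\mathbb{C}$-algebra, and every element of $G$ lies in $\mathcal{A}$. The target form $\mathcal{K}^{*}_{\eta,r}(\mathbb{C})$ is exactly the block-diagonal shape whose $k$-th block is lower triangular with a single repeated diagonal entry; ordinary simultaneous triangularization would only give a triangular form, not this block constant-diagonal structure. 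The extra structure is precisely what a basis adapted to the common generalized eigenspaces of $\mathcal{A}$ produces, so the whole argument is organized around constructing such a basis.

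Next I would use the primary decomposition of $\mathbb{C}^{n}$ as an $\mathcal{A}$-module. The algebra homomorphisms $\chi\colon\mathcal{A}\to\mathbb{C}$ (the characters, in bijection with the finitely many maximal ideals of $\mathcal{A}$) yield a direct sum $\mathbb{C}^{n}=V_{1}\oplus\dots\oplus V_{r}$, where $V_{k}$ is the generalized common eigenspace attached to $\chi_{k}$. By construction each $V_{k}$ is $G$-invariant, and on $V_{k}$ every $A\in G$ acts as $\chi_{k}(A)\,\mathrm{Id}+N_{A}^{(k)}$ with $N_{A}^{(k)}$ nilpotent; moreover the operators $N_{A}^{(k)}$ commute because $G$ is abelian. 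Setting $n_{k}=\dim_{\mathbb{C}}V_{k}$ fixes $r\in\{1,\dots,n\}$ and $\eta=(n_{1},\dots,n_{r})$, and since each $A\in G$ is invertible we get $\chi_{k}(A)\neq 0$, which is exactly the condition $\mu\neq 0$ defining the starred space $\mathbb{T}^{*}_{n_{k}}(\mathbb{C})$.

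Then on each block I would simultaneously triangularize the commuting nilpotent family $\{N_{A}^{(k)}:A\in G\}$. Over $\mathbb{C}$ a commuting family of operators always admits a common eigenvector, and for nilpotent operators that eigenvalue must be $0$; applying this inside a descending chain of invariant subspaces and inducting on $\dim V_{k}$ produces a common complete flag, i.e.\ a basis of $V_{k}$ in which every $N_{A}^{(k)}$ is strictly lower triangular. In that basis each $A|_{V_{k}}$ becomes lower triangular with the constant diagonal $\chi_{k}(A)$, hence belongs to $\mathbb{T}^{*}_{n_{k}}(\mathbb{C})$. Concatenating the chosen bases of $V_{1},\dots,V_{r}$ as the columns of an invertible matrix $P$, I obtain $P^{-1}GP\subset\mathcal{K}^{*}_{\eta,r}(\mathbb{C})$, which is the assertion.

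The delicate point is that a single $P$ must work for the whole group at once, so I cannot triangularize one matrix at a time. This is exactly why the argument is routed through the algebra $\mathcal{A}$: the character decomposition $\mathbb{C}^{n}=V_{1}\oplus\dots\oplus V_{r}$ is intrinsic to $\mathcal{A}$ and therefore automatically simultaneous for every element of $G$, and the existence of a common eigenvector for an arbitrary (possibly infinite) commuting family reduces to the finite dimensionality of $\mathcal{A}$. Once these two simultaneity statements are secured, the remaining verification—that the assembled change of basis lands each element of $G$ in the prescribed block-triangular constant-diagonal form—is routine bookkeeping.
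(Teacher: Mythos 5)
Your argument is correct: the paper itself gives no proof of this proposition (it is quoted from \cite{aAh-M05}, Proposition 2.3), and your route—primary decomposition of $\mathbb{C}^{n}$ under the finite-dimensional commutative algebra generated by $G$, followed by simultaneous strict triangularization of the commuting nilpotent parts on each generalized common eigenspace—is the standard proof and the one used in that reference. The only point worth making explicit is that each $V_{k}$ is nonzero (which follows since $\mathcal{A}\subset M_{n}(\mathbb{C})$ acts faithfully, so a nonzero idempotent has nonzero image), ensuring $n_{k}\geq 1$ and hence $r\leq n$.
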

\bigskip

\section{{\bf Proof of Theorem~\ref{T:1}}}

 Throughout this section, suppose that $G$ is an abelian
subgroup of $\mathcal{K}^{*}_{\eta,r}(\mathbb{C})$ fore some
$r\in\mathbb{N}^{*}$ and
$\eta=(n_{1},\dots,n_{r})\in(\mathbb{N}^{*})^{r}$.
\
\\
Every $A\in G$ has the form $A=\mathrm{diag}(A_{1},\dots, A_{r})$
with $A_{k}\in\mathbb{T}_{n_{k}}(\mathbb{C})$ $k=1,\dots, r$.
Denote by:\
\\
- $G_{k}=\{A_{k},\ \ A\in G\}$, $k=1,\dots, r$.\
\\
-  $E(u_{k})=\mathrm{vect}(G_{k}(u_{k}))$, for every
$u=[u_{1},\dots,u_{r}]^{T}\in\mathbb{C}^{n}$.\
\\
- $\mathcal{G}=\mathrm{vect}(G)$,  the vector subspace of
$\mathcal{K}_{\eta,r}(\mathbb{C})$ generated by all elements of
$G$.
\\
-
$U=\underset{k=1}{\overset{r}{\prod}}\mathbb{C}^{*}\times\mathbb{C}^{n_{k}-1}$.
\medskip

\begin{lem}\label{L:5} Let $G$ be an abelian subgroup of $\mathcal{K}^{*}_{\eta,r}(\mathbb{C})$. Then for every $u\in\mathbb{C}^{n}$,
 $E(u)$ is $G$-invariant.
\end{lem}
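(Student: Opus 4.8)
The plan is to verify directly that $A(E(u)) \subset E(u)$ for every $A \in G$, using only that $G$ is a group and that each $A$ acts linearly over $\mathbb{C}$. Recall that $E(u) = \mathrm{vect}(G(u))$ is by definition the $\mathbb{C}$-linear span of the orbit $G(u) = \{Bu : B \in G\}$, so showing $G$-invariance of $E(u)$ amounts to showing that each $A \in G$ maps this span into itself.

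First I would reduce the invariance of the span $E(u)$ to the invariance of the orbit $G(u)$ under left multiplication. Fix $A \in G$, and take a generator $Bu$ of $E(u)$ with $B \in G$. Then $A(Bu) = (AB)u$, and since $G$ is a subgroup of $\mathcal{K}^{*}_{\eta,r}(\mathbb{C})$ it is closed under products, so $AB \in G$ and hence $(AB)u \in G(u) \subset E(u)$. Thus $A$ sends every element of the orbit $G(u)$ back into $G(u)$.

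Next, since $A$ is a linear map, it commutes with the formation of $\mathbb{C}$-linear combinations: an arbitrary $w \in E(u)$ can be written $w = \sum_{j} \lambda_{j} B_{j} u$ with $\lambda_{j} \in \mathbb{C}$ and $B_{j} \in G$, and then $Aw = \sum_{j} \lambda_{j} (A B_{j}) u \in E(u)$ by the previous step. This yields $A(E(u)) \subset E(u)$, and since $A \in G$ was arbitrary, $E(u)$ is $G$-invariant.

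There is no serious obstacle here; the only points requiring care are that the span is taken over $\mathbb{C}$, so that linearity of the action over $\mathbb{C}$ is exactly what is needed, and that it is closure of $G$ under multiplication (not commutativity) that makes $AB$ land back in $G$. The block-diagonal description of elements of $\mathcal{K}^{*}_{\eta,r}(\mathbb{C})$ plays no role in this particular statement and need not be invoked.
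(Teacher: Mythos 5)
Your proof is correct and follows essentially the same route as the paper's: both arguments write an element of $E(u)$ as a $\mathbb{C}$-linear combination of orbit points $B_{j}u$ and use linearity together with $AB_{j}\in G$ to conclude $A(E(u))\subset E(u)$. No issues.
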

\medskip

\begin{proof} Suppose that   $E(u)$ is generated by
$A_{1}u,\dots,A_{p}u$, with $A_{k}\in G$, $1\leq k\leq p$. Let
$w=\underset{k=1}{\overset{p}{\sum}}\alpha_{k}A_{k}u\in E(u)$ and
$B\in G$, then
$Bw=\underset{k=1}{\overset{p}{\sum}}\alpha_{k}BA_{k}u$. Since
$BA_{k}u\in G(u)\subset E(u)$  then it has the form
$BA_{k}u=\underset{j=1}{\overset{p}{\sum}}\beta_{k,j}A_{j}u$,
$\beta_{k,j}\in\mathbb{C}$, so $Bw=\underset{1\leq k,j\leq p
}{\sum}\alpha_{k}\beta_{k,j}A_{j}u\in E(u)$.
\end{proof}
\bigskip

\begin{lem}\label{L:6}$($\cite{S.Ch}, Proposition 3.1$)$ Let $G$ be an abelian subgroup of $GL(n,
\mathbb{C})$, $u\in \mathbb{C}^{n}$ and $v\in E(u)$.
\begin{itemize}
  \item [(i)] Then there
exist $B\in \mathcal{G}$ such that $Bu =v$.
  \item [(ii)] If \ $E(u) = E(v)$, \ then
$G(u)$ and $G(v)$ are isomorphic.
 \end{itemize}
\end{lem}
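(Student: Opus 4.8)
The plan is to treat the two parts in turn, the first being essentially a restatement of the definitions and the second an equivariance argument built on it.

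For part (i), since $v\in E(u)=\mathrm{vect}(G(u))$, I would write $v$ as a finite linear combination $v=\sum_{k=1}^{p}\alpha_{k}A_{k}u$ with $A_{k}\in G$ and $\alpha_{k}\in\mathbb{C}$. Collecting the matrices, set $B=\sum_{k=1}^{p}\alpha_{k}A_{k}$. By definition $B\in\mathrm{vect}(G)=\mathcal{G}$, and by linearity $Bu=\sum_{k=1}^{p}\alpha_{k}A_{k}u=v$. This is immediate and uses nothing beyond the definitions of $E(u)$ and $\mathcal{G}$.

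For part (ii), the idea is to promote the matrix $B$ produced in (i) to a $G$-equivariant homeomorphism between the two orbits. First I would record the algebraic fact that makes everything work: because $G$ is abelian, every $B=\sum_{k}\alpha_{k}A_{k}\in\mathcal{G}$ commutes with every $A\in G$, since $BA=\sum_{k}\alpha_{k}A_{k}A=\sum_{k}\alpha_{k}AA_{k}=AB$. Using $v=Bu$ together with this commutation, the restriction $\phi:=B|_{G(u)}$ satisfies $\phi(Au)=B(Au)=A(Bu)=Av$ for every $A\in G$; hence $\phi$ carries $G(u)$ onto $G(v)$ and intertwines the $G$-actions. Symmetrically, since $E(u)=E(v)$ we have $u\in E(v)$, so part (i) applied with the roles of $u$ and $v$ exchanged yields $B'\in\mathcal{G}$ with $B'v=u$; the restriction $\psi:=B'|_{G(v)}$ then carries $G(v)$ onto $G(u)$ and is likewise equivariant.

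It then remains to check that $\phi$ and $\psi$ are mutually inverse. Using equivariance and $B'v=u$, I would compute $\psi(\phi(Au))=\psi(Av)=B'(Av)=A(B'v)=Au$, and symmetrically $\phi(\psi(Av))=Av$, so that $\psi\circ\phi=\mathrm{id}_{G(u)}$ and $\phi\circ\psi=\mathrm{id}_{G(v)}$. Since $\phi$ and $\psi$ are restrictions of continuous linear maps, they are continuous, and being mutually inverse they are homeomorphisms; thus $\phi$ is a $G$-equivariant homeomorphism of $G(u)$ onto $G(v)$, which is the required isomorphism. The only point that needs care, and the one place where the abelian hypothesis is indispensable, is the commutation relation $BA=AB$, which simultaneously makes $\phi$ a well-defined map into $G(v)$, makes it equivariant, and makes $\psi$ a genuine inverse. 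I would stress that one does not need $B$ (or $B'$) to be an invertible matrix on $\mathbb{C}^{n}$, nor even on $E(u)$: all that is used is that $B|_{G(u)}$ and $B'|_{G(v)}$ are inverse to one another on the orbits themselves, which the commutation relation guarantees.
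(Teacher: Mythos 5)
Your proof is correct. Note first that the paper itself offers no proof of this lemma: it is imported verbatim from Chihi's paper (cited as Proposition 3.1 of \cite{S.Ch}), so there is no in-paper argument to compare against. Your part (i) is exactly the definitional computation one expects: $v=\sum_{k}\alpha_{k}A_{k}u$ forces $B=\sum_{k}\alpha_{k}A_{k}\in\mathrm{vect}(G)=\mathcal{G}$ to satisfy $Bu=v$. Your part (ii) is a clean and complete equivariance argument, and you correctly isolate the one nontrivial ingredient: commutativity of $G$ passes by linearity to $\mathcal{G}$, which is what makes $B|_{G(u)}$ land in $G(v)$, makes it intertwine the actions, and makes $B'|_{G(v)}$ a genuine two-sided inverse on the orbits via $\psi(\phi(Au))=AB'Bu=AB'v=Au$. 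Your closing remark that invertibility of $B$ on $\mathbb{C}^{n}$ (or even on $E(u)$) is never needed is also the right observation --- this is precisely why the hypothesis $E(u)=E(v)$, rather than invertibility of some matrix, is what drives (ii); compare Proposition~\ref{p:7+0+} and Lemma~\ref{L:7+0+}, where the authors do extra work to get an invertible $B$ because they need more than an orbit isomorphism there. The only cosmetic caveat is that the paper never formally defines what ``isomorphic'' means for orbits; your $G$-equivariant homeomorphism is the strongest reasonable reading and certainly suffices.
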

\bigskip

\begin{prop}\label{p:7+0+} Let $G$ be an abelian subgroup of $\mathcal{K}^{*}_{\eta,r}(\mathbb{C})$ and $u\in U$
such that $E(u)=\mathbb{C}^{n}$. Then  for every  $v\in U$ there
exists $B \in \mathcal{G}\cap GL(n, \mathbb{C})$ such that $Bu =
v$. In particular, $E(v)=\mathbb{C}^{n}$.
\end{prop}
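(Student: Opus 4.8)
The plan is to reduce everything to two observations: that $\mathcal{G}=\mathrm{vect}(G)$ is a unital commutative subalgebra of $\mathcal{K}_{\eta,r}(\mathbb{C})$, and that membership in $U$ rigidly pins down the diagonal of any matrix sending $u$ to $v$. First I would record the algebra structure. Since $G$ is a group, the product of two elements of $G$ lies in $G$, so $\mathcal{G}=\mathrm{vect}(G)$ is closed under multiplication; it contains $I_n\in G$, hence is a finite-dimensional unital commutative $\mathbb{C}$-algebra. Because $E(u)=\mathrm{vect}(G(u))=\mathcal{G}u$, the hypothesis $E(u)=\mathbb{C}^{n}$ together with Lemma~\ref{L:6}(i) immediately produces, for any $v\in\mathbb{C}^{n}$ (in particular $v\in U$), some $B\in\mathcal{G}$ with $Bu=v$. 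Thus the only real content is to show this $B$ can be taken invertible, and then that $E(v)=\mathbb{C}^{n}$.

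The crucial step—and the one place where the hypothesis $u,v\in U$ is used—is a first-coordinate computation done blockwise. Every $B\in\mathcal{G}$ has the form $B=\mathrm{diag}(B_{1},\dots,B_{r})$ with $B_{k}\in\mathbb{T}_{n_{k}}(\mathbb{C})$; write $\mu_{k}(B)$ for the common diagonal entry of $B_{k}$. Decomposing $u=[u_{1},\dots,u_{r}]^{T}$ and $v=[v_{1},\dots,v_{r}]^{T}$, the equation $Bu=v$ reads $B_{k}u_{k}=v_{k}$ for each $k$, and since $B_{k}$ is lower triangular with constant diagonal $\mu_{k}(B)$, reading off the first coordinate of block $k$ gives $\mu_{k}(B)\,(u_{k})_{1}=(v_{k})_{1}$. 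As $u,v\in U$ force $(u_{k})_{1}\neq0$ and $(v_{k})_{1}\neq0$, we conclude $\mu_{k}(B)=(v_{k})_{1}/(u_{k})_{1}\neq0$ for every $k$. Because $B$ is block lower-triangular, $\det B=\prod_{k=1}^{r}\mu_{k}(B)^{n_{k}}\neq0$, so $B\in\mathcal{G}\cap GL(n,\mathbb{C})$, which is exactly the desired preimage.

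For the final assertion $E(v)=\mathbb{C}^{n}$, I would use the algebra structure once more. Since $\mathcal{G}$ is a finite-dimensional unital algebra and $B\in\mathcal{G}$ is invertible as a matrix, the Cayley–Hamilton relation (whose constant term is nonzero because $0$ is not an eigenvalue of $B$) expresses $B^{-1}$ as a polynomial in $B$, so $B^{-1}\in\mathcal{G}$. Then $u=B^{-1}v\in\mathcal{G}v=E(v)$, whence $\mathbb{C}^{n}=E(u)\subseteq E(v)\subseteq\mathbb{C}^{n}$, giving $E(v)=\mathbb{C}^{n}$.

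I do not expect a genuine obstacle here: once the first-coordinate identity is isolated, invertibility is forced automatically rather than arranged, so there is no need to argue—via the infinitude of $\mathbb{C}$—that the affine fiber $\{B:Bu=v\}$ escapes the singular hyperplanes $\{\mu_{k}=0\}$; every point of that fiber is already invertible. The only items requiring a line of justification are the stability of $\mathcal{G}$ under products and the inclusion $B^{-1}\in\mathcal{G}$, both of which are standard facts about finite-dimensional unital algebras.
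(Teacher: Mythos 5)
Your proof is correct and follows essentially the same route as the paper's: obtain $B\in\mathcal{G}$ with $Bu=v$ from Lemma~\ref{L:6}(i), then read off the first coordinate of each block to get $\mu_{k}(B)=(v_{k})_{1}/(u_{k})_{1}\neq 0$, forcing $\det B\neq 0$ (the paper isolates exactly this computation, block by block, in Lemma~\ref{L:7+0+}). The only cosmetic difference is the final step: the paper deduces $E(v)=B(E(u))=\mathbb{C}^{n}$ from $B$ commuting with $G$ and being invertible, whereas you invert $B$ inside the unital algebra $\mathcal{G}$ via Cayley--Hamilton; both are immediate.
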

\bigskip

\begin{lem}\label{L:7+0+} Let $G$ be an abelian subgroup of $\mathbb{T}^{*}_{n}(\mathbb{C})$ and
 $u\in \mathbb{C}^{*}\times\mathbb{C}^{n-1}$. Then
 for every  $v\in \mathbb{C}^{*}\times\mathbb{C}^{n-1}$ there
exist $B \in \mathcal{G}\cap GL(n, \mathbb{C})$ such that $Bu=v$.
\end{lem}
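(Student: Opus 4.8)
The plan is to separate the statement into an existence claim and an invertibility claim, disposing of the first by the already-established Lemma~\ref{L:6} and the second by a one-line observation forced by the triangular shape of the matrices in $\mathcal{G}$. Consider the linear map $\Phi_{u}\colon \mathcal{G}\To\mathbb{C}^{n}$, $B\mapsto Bu$. Since $G$ is an abelian group, products of its elements remain in $G$, so $\mathcal{G}=\mathrm{vect}(G)$ is a commutative subalgebra of $\mathbb{T}_{n}(\mathbb{C})$ containing the identity, and by Lemma~\ref{L:6}(i) (together with the obvious inclusion $\Phi_{u}(\mathcal{G})\subseteq \mathrm{vect}(G(u))$) its image is exactly $E(u)=\mathrm{vect}(G(u))$. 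Thus the targets reachable from $u$ by an element of $\mathcal{G}$ are precisely the vectors of $E(u)$.

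For the existence step I would simply invoke Lemma~\ref{L:6}(i): for every $v\in E(u)$ there is $B\in\mathcal{G}$ with $Bu=v$. What makes every $v\in\mathbb{C}^{*}\times\mathbb{C}^{n-1}$ admissible is that in the operative setting $E(u)=\mathbb{C}^{n}$, this lemma being the one-block instance ($r=1$) of Proposition~\ref{p:7+0+}, whose standing hypothesis is $E(u)=\mathbb{C}^{n}$; note that some such hypothesis is genuinely needed, since for $G$ a group of scalar matrices one has $E(u)=\mathbb{C}u$ and the conclusion fails. Granting $E(u)=\mathbb{C}^{n}$, every $v$ lies in $E(u)$, so Lemma~\ref{L:6}(i) produces $B\in\mathcal{G}$ with $Bu=v$.

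It remains to upgrade this $B$ to an invertible matrix, and here the triangular form does the work for free. Every element of $\mathcal{G}\subseteq\mathbb{T}_{n}(\mathbb{C})$ is lower triangular with a single value $\mu_{B}$ repeated along the diagonal, so its first row is $(\mu_{B},0,\dots,0)$ and hence $(Bu)_{1}=\mu_{B}u_{1}$. Reading the first coordinate of the equation $Bu=v$ gives $\mu_{B}u_{1}=v_{1}$; since $u_{1}\neq 0$ and $v_{1}\neq 0$ by hypothesis, $\mu_{B}\neq 0$, whence $\det B=\mu_{B}^{\,n}\neq 0$ and therefore $B\in\mathcal{G}\cap GL(n,\mathbb{C})$. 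This finishes the argument.

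The only substantive point is the existence step and its reliance on $E(u)=\mathbb{C}^{n}$; the invertibility is automatic from the shape~$(1)$ once $u_{1}$ and $v_{1}$ are nonzero, which is precisely why the hypotheses single out the first coordinate. As a consistency check for the multi-block statement, one may add that $\mathcal{G}$ being a finite-dimensional commutative algebra forces $B^{-1}\in\mathbb{C}[B]\subseteq\mathcal{G}$ by Cayley--Hamilton, so $u=B^{-1}v$ gives $E(v)=E(u)=\mathbb{C}^{n}$, matching the conclusion $E(v)=\mathbb{C}^{n}$ of Proposition~\ref{p:7+0+}.
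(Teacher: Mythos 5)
Your proposal is correct and follows essentially the same route as the paper's own proof: invoke Lemma~\ref{L:6}(i) to produce $B\in\mathcal{G}$ with $Bu=v$, then read off the first coordinate $\mu_{B}x_{1}=y_{1}$ to conclude $\mu_{B}\neq 0$ and hence $B\in GL(n,\mathbb{C})$. You are also right to flag that the hypothesis $E(u)=\mathbb{C}^{n}$ is used but not stated in the lemma (the paper's proof silently assumes it from the context of Proposition~\ref{p:7+0+}), and your scalar-matrix counterexample correctly shows it cannot be dropped.
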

\bigskip

\begin{proof}  Let
$v\in \mathbb{C}^{*}\times\mathbb{C}^{n-1}$ since
$E(u)=\mathbb{C}^{n}$, by lemma ~\ref{L:6}, there exist $B \in
\mathcal{G}$ such that $Bu = v$. Since
$\mathbb{T}_{n}(\mathbb{C})$ is a vector space so
$\mathcal{G}\subset \mathbb{T}_{n}(\mathbb{C})$. Write
$u=[x_{1},\dots,x_{n}]^{T}$, $v=[y_{1},\dots,y_{n}]^{T}$ and
$$B=\left[\begin{array}{cccc}
  \mu_{B} & \ & \ & 0 \\
  a_{2,1} & \ddots & \ & \ \\
  \vdots & \ddots & \ddots & \ \\
  a_{n,1} & \dots & a_{n,n-1} & \mu_{B}
\end{array}\right],$$ then $\mu_{B}=\frac{y_{1}}{x_{1}}\neq 0$, hence $B\in GL(n,
\mathbb{C})$. As $\mathcal{G}\subset \mathcal{C}(G)$,  \
$B(E(u))=E(v)$.
\end{proof}
\bigskip

\begin{proof}[Proof of Proposition~\ref{p:7+0+}]Write  $u=[u_{1},\dots,u_{r}]^{T}$ and let $v=[v_{1},\dots,v_{r}]^{T}\in
U$,  with $u_{k},v_{k}\in\mathbb{C}^{n_{k}}$.  Since
$E(u)=\mathbb{C}^{n}$, then $E(u_{k})=\mathbb{C}^{n_{k}}$ for
every $k=1,\dots, r$. As $\mathcal{G}=\mathcal{G}_{1}\oplus\dots
\oplus \mathcal{G}_{r}$ and  $v_{k}\in \mathbb{C}^{*}\times
\mathbb{C}^{n_{k}-1}$, where $\mathcal{G}_{k}=Vect(G_{k})$. Hence
the proof results from Lemma~\ref{L:7+0+}. In particular,
$E(v)=B(E(u))=\mathbb{C}^{n}$.
\end{proof}
\medskip

\subsection{{\bf Proof of Theorem ~\ref{T:1}.}} Let $u\in \mathbb{C}^{n}$. By Proposition ~\ref{p:8}, we can assume that
$G\subset \mathcal{K}^{*}_{\eta,r}(\mathbb{C})$ and
$E(u)=\mathbb{C}^{n}$, otherwise, by Lemma~\ref{L:5}
 we replace $G$ by the restriction $G_{/E(u)}$. By construction, $\mathbb{C}^{n}\backslash U$ is union of $r$ $G$-invariant vector subspaces of
 $\mathbb{C}^{n}$ with dimension $n-1$, then we proves (iii) and we deduce that $u\in U$. Now, for every $v\in U$,
  there exists  by Proposition~\ref{p:7+0+}, $B \in \mathcal{G}\cap GL(n, \mathbb{C})$ such that $Bu= v$. Hence $E(v)=B(E(u))=\mathbb{C}^{n}$ and
  $G(v)=B(G(u))$, this proves (i) and (ii).  $\hfill{\Box}$
\bigskip

\section{{\bf Parametrization}}

Let $G$ be an abelian subgroup of
$\mathcal{K}^{*}_{\eta,r}(\mathbb{C})$ and $u\in \mathbb{C}^{n}$,
by Lemma ~\ref{L:5}, $E(u)$ is $G$-invariant, so consider the
linear map
\begin{align*}
\Phi_{u}:\  & vect\left(G_{/E(u)}\right)\longrightarrow  E(u)\\
 & A \mapsto Au
\end{align*}

\begin{prop}\label{p:41} For every $u\in \mathbb{C}^{n}\backslash\{0\}$, $\Phi_{u}$ is  a linear isomorphism.
\end{prop}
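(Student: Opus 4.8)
The plan is to check the three properties that together make $\Phi_u$ a linear isomorphism: linearity, surjectivity onto $E(u)$, and injectivity. Linearity is built into the definition, since for a fixed $u$ the assignment $A\mapsto Au$ is linear in the operator $A$; moreover $\Phi_u$ is well-defined on $vect(G_{/E(u)})$ because its value at $u$ depends only on the action of the (restricted) operator on $E(u)$, and $u\in E(u)$.

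For surjectivity I would argue straight from the definition $E(u)=vect(G(u))$. An arbitrary $v\in E(u)$ is a finite combination $v=\sum_k \alpha_k A_k u$ with $A_k\in G$ and $\alpha_k\in\mathbb{C}$; putting $B=\sum_k \alpha_k A_k\in\mathcal{G}=vect(G)$, its restriction $B_{/E(u)}$ belongs to $vect(G_{/E(u)})$ and $\Phi_u(B_{/E(u)})=Bu=v$. (This is also precisely the content of Lemma~\ref{L:6}(i).) Hence $\Phi_u$ is onto $E(u)$.

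The real content is injectivity, where commutativity of $G$ is decisive. Suppose $A\in vect(G_{/E(u)})$ satisfies $\Phi_u(A)=Au=0$, and write $A=\sum_k\alpha_k (A_k)_{/E(u)}$ with $A_k\in G$, so that $\sum_k\alpha_k A_k u=0$. By Lemma~\ref{L:5} the space $E(u)$ is $G$-invariant and, by construction, spanned by the orbit $G(u)$; thus to conclude $A=0$ it suffices to show $A$ kills every generator $v=Bu$, $B\in G$. Using $A_kB=BA_k$ I compute
\[
Av=\Big(\sum_k\alpha_k A_k\Big)Bu=\sum_k\alpha_k A_kBu=\sum_k\alpha_k BA_ku=B\Big(\sum_k\alpha_k A_k u\Big)=0.
\]
Hence $A$ vanishes on a spanning set of $E(u)$, so it is the zero operator on $E(u)$, which gives injectivity.

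The single step I expect to require attention is the commutation $A_kB=BA_k$: it holds because $G$ is abelian, but I must confirm that passing to the restrictions to $E(u)$ preserves it, which is guaranteed by the $G$-invariance of $E(u)$ (Lemma~\ref{L:5}), ensuring all the products above stay inside $E(u)$. Combining surjectivity and injectivity then shows that $\Phi_u$ is a linear isomorphism; in particular $\dim\big(vect(G_{/E(u)})\big)=\dim(E(u))=r(u)$, a consequence I would expect to use later.
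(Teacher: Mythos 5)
Your proposal is correct and follows essentially the same route as the paper: surjectivity straight from $E(u)=vect(G(u))$, and injectivity by using commutativity to show that any $A\in\mathrm{Ker}(\Phi_u)$ annihilates a spanning set of $E(u)$. The only cosmetic difference is that you test $A$ on the generators $Bu$ with $B\in G$, while the paper tests it on an arbitrary $x=Bu$ with $B\in vect(G_{/E(u)})$ obtained from surjectivity; both hinge on the same commutation $AB=BA$.
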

\medskip

\begin{proof} By construction,  $\Phi_{u}$ is surjective, since $\Phi_{u}\left(\textrm{Vect}\left(G_{/E(u)}\right)\right) = E(u)$.
\
\\
- $\Phi_{u}$ is injective: let $A\in \textrm{Ker}(\Phi_{u})$, so
$Au=0$. Let $x\in E(u)$, then by above there exists $B\in
\textrm{Vect}\left(G_{/E(u)}\right)$ such that $x=Bu$. As  $A\in
\textrm{Ker}(\Phi_{u})\subset vect\left(G_{/E(u)}\right)$
 then $AB = BA$. Therefore
$Ax=ABu=BAu=B(0)=0$. It follows that $A=0$ and hence
$\textrm{Ker}(\Phi_{u})= \{0\}$.
\end{proof}
\medskip

\begin{cor}\label{C:5} We have $\Phi^{-1}_{u}(G(u))=G_{/E(u)}$ and  $\Phi^{-1}_{u}(\mathrm{g}_{u})=\mathrm{g}_{/E(u)}$.
\end{cor}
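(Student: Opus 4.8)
The plan is to read both equalities off Proposition~\ref{p:41}, which provides that $\Phi_u$ is a linear isomorphism, together with the elementary remark that restricting an operator to the invariant subspace $E(u)$ does not change its value on $u$: if a matrix $B$ leaves $E(u)$ invariant then $B_{/E(u)}\,u = Bu$, so $\Phi_u(B_{/E(u)}) = Bu$ whenever $B_{/E(u)}$ lies in the domain $vect(G_{/E(u)})$ of $\Phi_u$. Both identities then follow by computing the image of the relevant set under $\Phi_u$ and applying $\Phi_u^{-1}$, using injectivity.

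For the first identity, every $A \in G$ leaves $E(u)$ invariant by Lemma~\ref{L:5}, hence $A_{/E(u)} \in G_{/E(u)} \subset vect(G_{/E(u)})$ and $\Phi_u(A_{/E(u)}) = Au$. Letting $A$ run through $G$ gives $\Phi_u(G_{/E(u)}) = \{Au : A \in G\} = G(u)$, and since $\Phi_u$ is injective we conclude $\Phi_u^{-1}(G(u)) = G_{/E(u)}$.

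The second identity is the same computation, $\Phi_u(\mathrm{g}_{/E(u)}) = \{Bu : B \in \mathrm{g}\} = \mathrm{g}_u$ followed by inversion, but it is legitimate only after one knows that each $B \in \mathrm{g}$ leaves $E(u)$ invariant and that $B_{/E(u)} \in vect(G_{/E(u)})$; I expect this verification to be the main obstacle, since a priori an element of $\mathrm{g} = \exp^{-1}(G) \cap \mathcal{K}_{\eta,r}(\mathbb{C})$ need not even preserve $E(u)$. The clean way to dispose of it is to prove the inclusion $\mathrm{g} \subset \mathcal{G} = vect(G)$: once this holds, $\mathrm{g}$ inherits the $E(u)$-invariance of $\mathcal{G}$ (which follows from Lemma~\ref{L:5} by taking linear combinations), and restriction to $E(u)$ carries $\mathrm{g}$ into $\mathcal{G}_{/E(u)} = vect(G_{/E(u)})$, licensing the computation above. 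To establish $\mathrm{g} \subset \mathcal{G}$ I would use that $\mathcal{G}$ is the commutative unital algebra generated by $G$ and that, for the normal form, the diagonal blocks carry pairwise distinct characters $A \mapsto \mu_k(A)$: by $\exp(\mathrm{g}) = G$ (Lemma~\ref{L:1}(ii)) each $B \in \mathrm{g}$ is a logarithm of some $A \in G$, whose nilpotent part is a polynomial in $A$ and hence lies in $\mathcal{G}$, while the $2i\pi$-period ambiguities on the diagonal blocks lie in $\mathcal{G}$ by the linear independence of the distinct block characters (Artin). This is the only step that genuinely uses the normal-form hypothesis; granting it, the equality $\Phi_u^{-1}(\mathrm{g}_u) = \mathrm{g}_{/E(u)}$ is immediate.
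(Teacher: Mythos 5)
The paper gives no proof of this corollary at all: it is stated as an immediate consequence of Proposition~\ref{p:41}, so there is no argument of the authors' to compare yours against step by step. Your treatment of the first equality is the intended one and is correct: each $A\in G$ preserves $E(u)$ by Lemma~\ref{L:5}, $A_{/E(u)}\in vect(G_{/E(u)})$, and $\Phi_u(A_{/E(u)})=Au$, so $\Phi_u(G_{/E(u)})=G(u)$ and bijectivity of $\Phi_u$ finishes it. More importantly, you have correctly located the real content of the second equality, which the paper passes over in silence: one must know that every $B\in\mathrm{g}$ preserves $E(u)$ and that $B_{/E(u)}\in vect(G_{/E(u)})$ (in substance, $\mathrm{g}\subset\mathcal{G}=vect(G)$), since otherwise $\Phi_u^{-1}(Bu)$ is either undefined or is a different element of $vect(G_{/E(u)})$ having the same image under $\Phi_u$. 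Flagging this is the valuable part of your proposal; the identity $\Phi_u^{-1}(\mathrm{g}_u)=\mathrm{g}$ is used crucially in the proof of Theorem~\ref{T:3}, so the gap matters.

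Your proposed repair, however, is not yet a proof. The property you invoke --- pairwise distinct block characters $A\mapsto\mu_k(A)$ --- belongs to the particular normal form constructed in \cite{aAh-M05}, not to an arbitrary abelian subgroup of $\mathcal{K}^*_{\eta,r}(\mathbb{C})$, which is the standing hypothesis of Sections 4 and 5; and without it the inclusion, and the corollary itself, can fail. Take $G=\{e^kI_2:k\in\mathbb{Z}\}$ inside $\mathcal{K}^*_{(1,1),2}(\mathbb{C})$: then $\mathcal{G}=\mathbb{C}I_2$ while $\mathrm{diag}(2i\pi,0)\in\mathrm{g}$, and for $u=(1,1)^T$ one even has $\mathrm{g}_u\not\subset E(u)=\mathbb{C}(1,1)^T$, so $\Phi_u^{-1}(\mathrm{g}_u)$ is undefined. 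Even granting distinct characters, the Artin linear-independence step only yields that the block-scalar parts of elements of $\mathcal{G}$ sweep out all of $\mathbb{C}^r$; those elements generally carry nonzero nilpotent parts, so to conclude that the pure block-scalar matrices $2i\pi\,\mathrm{diag}(m_1I_{n_1},\dots,m_rI_{n_r})$ lie in $\mathcal{G}$ you must also show that the accompanying nilpotent contributions belong to $\mathcal{G}$ and can be subtracted off, which your sketch does not do. A more robust route is to import the explicit description of $\mathrm{g}_{u}$ from Proposition~\ref{p:017} and Corollary~\ref{C:17}, whose generators $B_ku$ and $2i\pi Qe^{(k)}$ can be checked individually to lie in $E(u)$ and to come from elements of $\mathcal{G}$; as it stands, the second equality remains unproved both in the paper and in your proposal.
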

\
\\
Under the above notation, we have:
\begin{prop}\label{p:15}  Let $G$ be an  abelian subgroup of
$\mathcal{K}^{*}_{\eta,r}(\mathbb{C})$. Then
$$exp(\Phi^{-1}_{u}(E(u)))\subset \Phi^{-1}_{u}(U_{u}).$$
\end{prop}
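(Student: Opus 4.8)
The plan is to reduce to the case $E(u)=\mathbb{C}^{n}$ and then exploit that $\mathcal{G}=\mathrm{vect}(G)$ is a unital commutative matrix algebra, which is automatically stable under the exponential. Exactly as in the proof of Theorem~\ref{T:1}, I would first replace $G$ by its restriction $G_{/E(u)}$, so that we may assume $G\subset\mathcal{K}^{*}_{\eta,r}(\mathbb{C})$ and $E(u)=\mathbb{C}^{n}$; under this reduction the set $U_{u}$ of Theorem~\ref{T:1} is the open set $U=\underset{k=1}{\overset{r}{\prod}}\mathbb{C}^{*}\times\mathbb{C}^{n_{k}-1}$, and $u\in U$. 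Since $\Phi_{u}:\mathcal{G}\longrightarrow E(u)$ is a bijection onto $E(u)=\mathbb{C}^{n}$ by Proposition~\ref{p:41}, the preimage of the whole codomain is the whole domain, i.e. $\Phi^{-1}_{u}(E(u))=\mathcal{G}$. Thus the inclusion to prove becomes $exp(\mathcal{G})\subset\Phi^{-1}_{u}(U)$, and it suffices to check, for every $B\in\mathcal{G}$, that $exp(B)\in\mathcal{G}$ and that $\Phi_{u}(exp(B))=exp(B)u\in U$.

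The structural heart of the argument is that $\mathcal{G}=\mathrm{vect}(G)$ is a unital commutative subalgebra of $M_{n}(\mathbb{C})$. It contains the identity, since $I\in G\subset\mathcal{G}$, and it is closed under products because $G$ is an abelian group: a product $\left(\sum_{i}\alpha_{i}A_{i}\right)\left(\sum_{j}\beta_{j}B_{j}\right)=\sum_{i,j}\alpha_{i}\beta_{j}A_{i}B_{j}$ with $A_{i}B_{j}\in G\subset\mathcal{G}$. Being a finite-dimensional, hence topologically closed, subalgebra containing $I$, the algebra $\mathcal{G}$ is stable under $exp$: for $B\in\mathcal{G}$ every partial sum $\sum_{k=0}^{N}B^{k}/k!$ lies in $\mathcal{G}$, and the convergent series stays in $\mathcal{G}$. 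Therefore $exp(B)\in\mathcal{G}$; and since $exp(B)\in GL(n,\mathbb{C})$ while $\mathcal{G}\subset\mathcal{K}_{\eta,r}(\mathbb{C})$, we get $exp(B)\in\mathcal{K}_{\eta,r}(\mathbb{C})\cap GL(n,\mathbb{C})=\mathcal{K}^{*}_{\eta,r}(\mathbb{C})$.

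It then remains to verify that $exp(B)u\in U$. Writing $exp(B)=\mathrm{diag}(C_{1},\dots,C_{r})$ with each $C_{k}\in\mathbb{T}^{*}_{n_{k}}(\mathbb{C})$, each block $C_{k}$ is lower triangular of the form $(1)$ with a constant nonzero diagonal entry $\mu_{k}$; hence its first row is $(\mu_{k},0,\dots,0)$, and if $u=[u_{1},\dots,u_{r}]^{T}$ the first coordinate of $(exp(B)u)_{k}=C_{k}u_{k}$ equals $\mu_{k}$ times the first coordinate of $u_{k}$. Because $u\in U$, the first coordinate of each $u_{k}$ is nonzero, and $\mu_{k}\neq 0$, so the first coordinate of each block of $exp(B)u$ is nonzero, which is precisely the condition $exp(B)u\in U$. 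Consequently $\Phi_{u}(exp(B))=exp(B)u\in U=U_{u}$, that is $exp(B)\in\Phi^{-1}_{u}(U_{u})$, which finishes the proof.

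The step I expect to demand the most care is the stability $exp(\mathcal{G})\subset\mathcal{G}$: it rests entirely on $\mathcal{G}$ being a \emph{unital associative} algebra, which uses both $I\in G$ and the commutativity of $G$ to guarantee closure under products. The only other point worth double-checking is that the reduction to $E(u)=\mathbb{C}^{n}$ legitimately identifies $U_{u}$ with the explicit set $U$, exactly as recorded in the proof of Theorem~\ref{T:1}.
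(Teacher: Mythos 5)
Your proof is correct and follows essentially the same route as the paper: reduce to $E(u)=\mathbb{C}^{n}$, establish $exp(\mathcal{G})\subset\mathcal{G}$ from the fact that $\mathcal{G}=\mathrm{vect}(G)$ is the (unital, commutative) subalgebra generated by $G$, and then use invertibility of $exp(B)$ together with the block-triangular form to see that the first coordinate of each block of $exp(B)u$ is nonzero, i.e.\ $exp(B)u\in U$. The only cosmetic difference is that the paper factors the argument through the single-block case (Lemma~\ref{L:10}) and then takes the direct sum over blocks, whereas you treat all blocks at once and spell out explicitly the identity $\Phi^{-1}_{u}(\mathbb{C}^{n})\cap\mathcal{K}^{*}_{\eta,r}(\mathbb{C})=\Phi^{-1}_{u}(U)$ that the paper asserts without computation.
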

\
\\
To prove Proposition ~\ref{p:15}, we need the following Lemma:
\begin{lem}\label{L:10}  Let $G$ be an  abelian subgroup of
$\mathbb{T}_{n}^{*}(\mathbb{C})$. If $E(u)=\mathbb{C}^{n}$, then
$$exp(\Phi^{-1}_{u}(\mathbb{C}^{n}))\subset\Phi^{-1}_{u}(\mathbb{C}^{*}\times
\mathbb{C}^{n-1}).$$
\end{lem}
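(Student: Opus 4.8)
The plan is to unwind the definitions so that the asserted inclusion becomes a concrete statement about first coordinates. Since $E(u)=\mathbb{C}^{n}$, the restriction $G_{/E(u)}$ is just $G$, and $\Phi_{u}$ is a linear isomorphism of $\mathcal{G}=\mathrm{vect}(G)$ onto $\mathbb{C}^{n}$ by Proposition~\ref{p:41}; in particular $\Phi^{-1}_{u}(\mathbb{C}^{n})=\mathcal{G}$. Writing $u=[x_{1},\dots,x_{n}]^{T}$, membership in $\Phi^{-1}_{u}(\mathbb{C}^{*}\times\mathbb{C}^{n-1})$ means precisely that $B\in\mathcal{G}$ and $(Bu)_{1}\neq 0$. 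Thus the lemma is equivalent to showing that for every $B\in\mathcal{G}$ one has both $\exp(B)\in\mathcal{G}$ and $(\exp(B)u)_{1}\neq 0$.

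First I would verify that $\mathcal{G}$ is a commutative unital subalgebra of $\mathbb{T}_{n}(\mathbb{C})$ that is closed under the exponential. Since $G$ is a subgroup, products $A_{i}A_{j}$ of generators stay in $G$, so by bilinearity $\mathcal{G}$ is closed under multiplication; it contains $I$ because $I\in G$; and commutativity of $G$ yields commutativity of $\mathcal{G}$. Being a finite-dimensional subspace, $\mathcal{G}$ is closed in $M_{n}(\mathbb{C})$, so for $B\in\mathcal{G}$ the partial sums $\sum_{k=0}^{N}B^{k}/k!$ lie in $\mathcal{G}$ and their limit $\exp(B)$ does as well.

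Next I would pin down the role of the first coordinate. Every matrix of $\mathbb{T}_{n}(\mathbb{C})$, and hence every $A\in\mathcal{G}$, is lower triangular with constant diagonal entry $\mu_{A}$, so $(Au)_{1}=\mu_{A}x_{1}$. If $x_{1}=0$ this would force $(Au)_{1}=0$ for all $A\in G$, giving $G(u)\subset\{0\}\times\mathbb{C}^{n-1}$ and hence $E(u)\subset\{0\}\times\mathbb{C}^{n-1}\neq\mathbb{C}^{n}$, contradicting the hypothesis $E(u)=\mathbb{C}^{n}$. Therefore $x_{1}\neq 0$, that is $u\in\mathbb{C}^{*}\times\mathbb{C}^{n-1}$.

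Finally, for $B\in\mathcal{G}$ I would write $B=\mu I+N$ with $N$ strictly lower triangular; then $\exp(B)=e^{\mu}\exp(N)$ is lower triangular with diagonal entry $e^{\mu}\neq 0$, so $(\exp(B)u)_{1}=e^{\mu}x_{1}\neq 0$. Combined with the stability $\exp(B)\in\mathcal{G}$ from the second step, this gives $\exp(B)\in\Phi^{-1}_{u}(\mathbb{C}^{*}\times\mathbb{C}^{n-1})$, which is exactly the claimed inclusion. The only point demanding a little care is this stability $\exp(\mathcal{G})\subset\mathcal{G}$, which is needed merely so that $\Phi^{-1}_{u}$ can be applied to $\exp(B)$ at all; the first-coordinate computation itself is immediate once the constant-diagonal structure of $\mathbb{T}_{n}(\mathbb{C})$ is exploited.
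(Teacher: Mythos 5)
Your proof is correct and follows essentially the same route as the paper: establish the stability $\exp(\mathcal{G})\subset\mathcal{G}$ via the power series, then use the constant-diagonal structure of $\mathbb{T}_{n}(\mathbb{C})$ together with $x_{1}\neq 0$ to see that exponentials land in $\Phi^{-1}_{u}(\mathbb{C}^{*}\times\mathbb{C}^{n-1})$. The paper phrases the last step as the intersection identity $\Phi^{-1}_{u}(\mathbb{C}^{n})\cap\mathbb{T}^{*}_{n}(\mathbb{C})=\Phi^{-1}_{u}(\mathbb{C}^{*}\times\mathbb{C}^{n-1})$ and merely asserts $u\in U$, whereas you supply the explicit first-coordinate computation and the short contradiction showing $x_{1}\neq 0$; this is the same argument with the details made explicit.
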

\bigskip

\begin{proof}  Here $G_{/E(u)}=G$ and $U_{u}=U=\mathbb{C}^{*}\times\mathbb{C}^{n-1}$.  First, one has $exp(\mathcal{G})\subset
\mathcal{G}$: indeed; for every $A\in \mathcal{G}$ we have
$A^{k}\in \mathcal{G}$, for every $k\in \mathbb{N}$ and so
$e^{A}=\underset{k\in\mathbb{N}}{\sum}\frac{A^{k}}{k!}\in
\mathcal{G}$. Moreover, we can check that $\mathcal{G}$ is the
subalgebra of $M_{n}(\mathbb{C})$ generated by $G$.
\
\\
Second, by corollary ~\ref{C:5}, $\Phi^{-1}_{u}(G(u))=G$. Since
$E(u)=\mathbb{C}^{n}$ and by Proposition~\ref{p:41}, $\Phi_{u}$ is
an isomorphism, then
\begin{align*}
 exp(\Phi^{-1}_{u}(\mathbb{C}^{n})) & =exp(\Phi^{-1}_{u}(E(u))) \\
  \ & =exp(\mathcal{G})\\
  \ & \subset \mathcal{G}=\Phi^{-1}_{u}(\mathbb{C}^{n})
\end{align*}

 Therefore
   $$exp(\Phi^{-1}_{u}(\mathbb{C}^{n}))  \subset \Phi^{-1}_{u}(\mathbb{C}^{n})\ \ \ \ (1)$$
\medskip

On the other hand,  by constraction of $\Phi_{u}$ and $U$, we have
$u\in U$ and so
$\Phi^{-1}_{u}(\mathbb{C}^{n})\cap\mathbb{T}^{*}_{n}(\mathbb{C})=\Phi^{-1}_{u}(\mathbb{C}^{*}\times\mathbb{C}^{n-1})$.
As  $exp(\Phi^{-1}_{u}(\mathbb{C}^{n}))\subset
\mathbb{T}^{*}_{n}(\mathbb{C})$ then by (1) we obtain
$$exp\left(\Phi^{-1}_{u}(\mathbb{C}^{n})\right)\subset
\Phi^{-1}_{u}(\mathbb{C}^{n})\cap\mathbb{T}^{*}_{n}(\mathbb{C})=\Phi^{-1}_{u}(\mathbb{C}^{*}\times\mathbb{C}^{n-1}).$$
\end{proof}
\bigskip

\begin{proof} [Proof of Proposition ~\ref{p:15}] Write $u=[u_{1},\dots,u_{r}]^{T}\in\mathbb{C}^{n}$. Suppose that
 $E(u)=\mathbb{C}^{n}$, otherwise we replace $G$ by $G_{/E(u)}$. So $G_{/E(u)}=G$ and $U_{u}=U$.  Since
$exp_{/\mathcal{K}_{\eta,r}(\mathbb{C})}=exp_{/\mathbb{T}_{n_{1}}(\mathbb{C})}\oplus\dots\oplus
exp_{/\mathbb{T}_{n_{r}}(\mathbb{C})}$
 and $\Phi^{-1}_{u}=\Phi^{-1}_{u_{1}}\oplus\dots\oplus\Phi^{-1}_{u_{r}}$. Then by Lemma ~\ref{L:10}
$$exp(\Phi^{-1}_{u}(U))=\underset{k=1}{\overset{r}{\prod}}exp_{/\mathbb{T}_{n_{k}}(\mathbb{C})}\left(\Phi^{-1}_{u_{k}}(\mathbb{C}^{n_{k}})\right)\subset
\underset{k=1}{\overset{r}{\prod}}\Phi^{-1}_{u_{k}}(\mathbb{C}^{*}\times\mathbb{C}^{n_{k}-1})=\Phi^{-1}(U).$$
\end{proof}

\bigskip
As consequence of Proposition ~\ref{p:15}, we have the following
results:
\begin{cor} \label{C:6} The map $f=\Phi_{u}\circ
exp_{/\Phi(\mathbb{C}^{n})}\circ\Phi_{u}^{-1} :
E(u)\longrightarrow U_{u}$ \ is well defined and continuous.
\end{cor}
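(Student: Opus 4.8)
The plan is to read off both assertions directly from the structure of the composition together with Proposition~\ref{p:15}. First I would unwind the domains and codomains of the three factors so as to see that they chain together. By Proposition~\ref{p:41}, $\Phi_{u}$ is a linear isomorphism from $\mathrm{vect}\left(G_{/E(u)}\right)$ onto $E(u)$, so $\Phi_{u}^{-1}$ carries $E(u)$ bijectively onto $\Phi_{u}^{-1}(E(u))=\mathrm{vect}\left(G_{/E(u)}\right)$. On this image the exponential is defined, and Proposition~\ref{p:15} asserts precisely that $\exp\left(\Phi_{u}^{-1}(E(u))\right)\subset\Phi_{u}^{-1}(U_{u})$. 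Since $\Phi_{u}^{-1}(U_{u})$ is a subset of $\mathrm{vect}\left(G_{/E(u)}\right)$, it lies in the domain of $\Phi_{u}$, which then sends it into $U_{u}$. Hence the three arrows compose with matching domains and $f$ genuinely maps $E(u)$ into $U_{u}$; this is the content of ``well defined'', and it rests entirely on Proposition~\ref{p:15}.

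For continuity I would argue that each factor is continuous and invoke the fact that a composition of continuous maps is continuous. The maps $\Phi_{u}$ and $\Phi_{u}^{-1}$ are linear maps between finite-dimensional vector spaces, hence continuous. The matrix exponential $\exp:M_{n}(\mathbb{C})\longrightarrow M_{n}(\mathbb{C})$ is continuous (indeed entire), and its restriction to the subspace $\Phi_{u}^{-1}(E(u))$ is again continuous. Composing these, $f=\Phi_{u}\circ \exp\circ\Phi_{u}^{-1}$ is continuous on $E(u)$.

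There is no genuine obstacle here: the corollary is a purely formal consequence of the earlier results. The only point requiring care is to verify that the target of $\exp\circ\Phi_{u}^{-1}$ falls inside both the domain of the outer $\Phi_{u}$ and, more sharply, inside $U_{u}$, so that the codomain of $f$ really is $U_{u}$ as asserted; this is exactly the inclusion furnished by Proposition~\ref{p:15}, with the finite-dimensionality of $E(u)$ guaranteeing continuity of the two linear factors.
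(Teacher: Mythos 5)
Your proposal is correct and is exactly the argument the paper intends: the corollary is stated as an immediate consequence of Proposition~\ref{p:15}, with well-definedness coming from the inclusion $\exp\left(\Phi_{u}^{-1}(E(u))\right)\subset\Phi_{u}^{-1}(U_{u})$ and continuity from composing the continuous factors $\Phi_{u}^{-1}$, $\exp$, and $\Phi_{u}$. You have merely written out the routine details that the paper leaves implicit.
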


\section{{\bf Proof of Theorems ~\ref{T:2}, ~\ref{T:3}, ~\ref{T:4} and Corollaries ~\ref{C:02} and ~\ref{C:2}}}

 By Proposition ~\ref{p:8}, suppose that $G$ is an abelian subgroup
of $\mathcal{K}^{*}_{\eta,r}(\mathbb{C})$ and then  \ $\mathrm{g}
= exp^{-1}(G)\cap \mathcal{K}_{\eta,r}(\mathbb{C})$.

In all this section fixed $u\in \mathbb{C}^{n}$ and suppose that
$E(u)=\mathbb{C}^{n}$ and $G_{/E(u)}=G$, leaving to replace $G$ by
$G_{/E(u)}$. Denote
 by $\Phi=\Phi^{-1}_{u}:\mathbb{C}^{n}\longrightarrow
 \mathcal{G}$. By Corollary~\ref{C:3},
 $exp:\mathcal{K}_{\eta,r}(\mathbb{C})\longrightarrow
 \mathcal{K}^{*}_{\eta,r}(\mathbb{C})$ is a local difeomorphism
 and by Proposition ~\ref{p:41}, $\Phi$ is an open map. Then we introduce the
 following Lemma which will be used in the proof of
 Theorem~\ref{T:2}.\
 \\
\begin{lem}\label{L:11} Let $O'$ be an open subset of $\mathbb{C}^{n}$ such that $exp_{/\Phi(O')}: \Phi(O')\longrightarrow exp(\Phi(O'))$ is a diffeomorphism. Then
$$exp\left(
\Phi(O')\right)\cap\overline{exp(\mathrm{g})}=exp\left(\Phi(O')\cap\overline{\mathrm{g}}\right).$$
\end{lem}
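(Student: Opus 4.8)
The plan is to prove the two inclusions separately, the whole argument resting on the single identity $exp(\mathrm{g})=G$ of Lemma~\ref{L:1}(ii), so that $\overline{exp(\mathrm{g})}=\overline{G}$. The inclusion $\supseteq$ is a direct continuity argument: if $x=exp(B)$ with $B\in\Phi(O')\cap\overline{\mathrm{g}}$, choose $B_k\in\mathrm{g}$ with $B_k\to B$; then $exp(B_k)\in G$ and, by continuity of $exp$, $exp(B_k)\to exp(B)=x$, so $x\in\overline{G}=\overline{exp(\mathrm{g})}$, while $x\in exp(\Phi(O'))$ because $B\in\Phi(O')$. This gives $exp\left(\Phi(O')\cap\overline{\mathrm{g}}\right)\subset exp(\Phi(O'))\cap\overline{exp(\mathrm{g})}$.

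For the reverse inclusion, take $x\in exp(\Phi(O'))\cap\overline{exp(\mathrm{g})}$ and write $x=exp(B)$ for the unique $B\in\Phi(O')$ (injectivity of the diffeomorphism $exp_{/\Phi(O')}$). I would first confine the whole picture to the closed subspace $\mathcal{G}$: since $G\subset\mathcal{G}$ we have $\overline{G}\subset\mathcal{G}$, hence $x\in\mathcal{G}$, and since $exp(\mathcal{G})\subset\mathcal{G}$ (established in the proof of Lemma~\ref{L:10}) we also have $exp(\Phi(O'))\subset\mathcal{G}$. The crucial geometric point is that $exp(\Phi(O'))$ is \emph{relatively open} in $\mathcal{G}$: by Proposition~\ref{p:41} the set $\Phi(O')$ is open in $\mathcal{G}$, and $exp_{/\Phi(O')}$ is a diffeomorphism onto its image lying inside $\mathcal{G}$, so that image is a submanifold of $\mathcal{G}$ of the same real dimension as $\mathcal{G}$ and is therefore open by invariance of domain.

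Granting this openness, choose $g_k\in G$ with $g_k\to x$. Since each $g_k\in G\subset\mathcal{G}$ and $exp(\Phi(O'))$ is a neighborhood of $x$ in $\mathcal{G}$, we get $g_k\in exp(\Phi(O'))$ for all large $k$. Let $D_k\in\Phi(O')$ be the unique preimage, $exp(D_k)=g_k$. Then $exp(D_k)=g_k\in G$ and $D_k\in\Phi(O')\subset\mathcal{G}\subset\mathcal{K}_{\eta,r}(\mathbb{C})$, so $D_k\in exp^{-1}(G)\cap\mathcal{K}_{\eta,r}(\mathbb{C})=\mathrm{g}$. Finally, continuity of the inverse diffeomorphism $\left(exp_{/\Phi(O')}\right)^{-1}$ applied to $g_k\to x$ yields $D_k\to B$, whence $B\in\overline{\mathrm{g}}$; combined with $B\in\Phi(O')$ this gives $x=exp(B)\in exp\left(\Phi(O')\cap\overline{\mathrm{g}}\right)$, completing $\subseteq$.

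The step I expect to be the main obstacle is the relative openness of $exp(\Phi(O'))$ in $\mathcal{G}$: it is exactly this fact that lets the approximating sequence $g_k\to x$ eventually enter the chart on which $exp$ is invertible, so that its preimages $D_k$ can be shown to lie in $\mathrm{g}$ and to converge to $B$. Both the diffeomorphism hypothesis on $exp_{/\Phi(O')}$ and the inclusion $exp(\mathcal{G})\subset\mathcal{G}$ are indispensable here; without them one cannot guarantee that the approximation takes place within the single diffeomorphic chart, and the identification of the limit $B$ as a point of $\overline{\mathrm{g}}$ would break down.
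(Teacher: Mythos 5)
Your proof is correct and follows essentially the same route as the paper: the easy inclusion by continuity of $exp$ (so that $exp(\overline{\mathrm{g}})\subset\overline{exp(\mathrm{g})}$), and the hard inclusion by taking an approximating sequence in $G=exp(\mathrm{g})$, pulling it back through the chart $exp_{/\Phi(O')}$ to a sequence in $\mathrm{g}$ converging to the preimage of the limit. The only difference is that you explicitly justify, via invariance of domain inside $\mathcal{G}$, why the approximating sequence eventually enters $exp(\Phi(O'))$ --- a point the paper asserts by calling $exp(\Phi(O'))$ ``an open set'' without specifying relative to what --- so your version is, if anything, slightly more complete.
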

\bigskip

\begin{proof} Let $A\in exp(\Phi(O'))\cap
\overline{exp(\mathrm{g})}$, then $A=e^{\Phi(x)}$ for some $x\in
O'$. By Lemma ~\ref{L:1}.(ii), $exp(\mathrm{g})=G$, so $A\in G$.
Since $A\in \overline{exp(\mathrm{g})}$, there exists a sequence
$(B_{m})_{m\in \mathbb{N}}$ in $\mathrm{g}$ such that
$\underset{m\to +\infty}{lim}e^{B_{m}}=A=e^{\Phi(x)}$.  Since
$exp_{/\Phi(O')}:\Phi(O')\longrightarrow exp(\Phi(O'))$  is a
diffeomorphism, $exp(\Phi(O'))$ is an open set containing $e^{A}$,
so $e^{B_{m}}\in exp(\Phi(O'))$, $\forall m\geq p$,
 for some $p\in\mathbb{N}$. Then $B'_{m}=exp_{/\Phi(O')}^{-1}(e^{B_{m}})\in \Phi(O')$, $\forall m\geq p$. Since
 $e^{B_{m}}\in exp(\mathrm{g})=G$ and $\Phi(O')\subset\mathcal{K}_{\eta,r}(\mathbb{C})$,
 then  $B'_{m}\in exp^{-1}(G)\cap \mathcal{K}_{\eta,r}(\mathbb{C})=\mathrm{g}$, for every $m\geq p$.
 Therefore $\underset{m\to
+\infty}{lim}B'_{m}=\Phi(x)$, so $\Phi(x)\in \Phi(O')\cap
\overline{\mathrm{g}}$ and hence $A\in
exp\left(\Phi(O')\right)\cap \overline{\mathrm{g}}$.
\bigskip

Conversely, by continuity  of  $exp_{/\Phi(O')}:
\Phi(O')\longrightarrow exp(\Phi(O'))$, one has
$$exp\left(\Phi(O')\right)\cap \overline{\mathrm{g}}\subset
exp\left(\Phi(O')\right)\cap exp\left(
\overline{\mathrm{g}}\right)\subset exp\left(\Phi(O')\right)\cap
\overline{exp(\mathrm{g})}.$$ The prove is completed.
\end{proof}
\bigskip

\subsection{{\bf Proof of Theorem ~\ref{T:3} }}The equivalence $(i)\Longleftrightarrow (ii)$ follows from
Theorem ~\ref{T:1}.(i).\
\\
 $(i)\Longleftrightarrow (iii)$:  By Lemma ~\ref{L:5}, suppose that
 $E(u)=\mathbb{C}^{n}$and $U_{u}=U$ (leaving to replace $G$ by $G_{/E(u)}$). Then by Proposition~\ref{p:41} and Corollary ~\ref{C:5},
$\Phi=\Phi^{-1}_{u}:\mathbb{C}^{n}\longrightarrow \mathcal{G}$ is
an isomorphism satisfying $\Phi(G(u))=G$ and
$\Phi(\mathrm{g}_{u})=\mathrm{g}$.

By Proposition ~\ref{p:4}.(i), there exist a vector space $V$,
contained in $\overline{\mathrm{g}_{u}}$, a vector space $W$ such
that $V\oplus W=\mathbb{C}^{n}$ and
 $\overline{\mathrm{g}_{u}}=\left(\overline{\mathrm{g}_{u}}\cap
W\right)\oplus V,$ with $\overline{\mathrm{g}_{u}}\cap W$ is
discrete.
\bigskip

By corollary ~\ref{C:4} and Proposition~\ref{p:4}, there exists an
open subset $O$ of  $\mathbb{C}^{n}$  such that  $O\cap
\overline{\mathrm{g}_{u}}=V$. By Corollary ~\ref{C:3}, the
exponential map $exp \ : \ \mathcal{K}_{\eta,r}(\mathbb{C})\
\longrightarrow \ \mathcal{K}^{*}_{\eta,r}(\mathbb{C})$ is a
locally diffeomorphism, then there exists an open subset
$O'\subset O$, of $\mathbb{C}^{n}$ such that the restriction
$exp_{/\Phi(O')} \ : \ \Phi(O') \ \longrightarrow \
exp\left(\Phi(O')\right)$ of the exponential map on $\Phi(O')$ is
a diffeomorphism. Since $O'\subset O$, then $$O'\cap
\overline{\mathrm{g}_{u}}=O'\cap V.\ \ \ \ \ (1)$$

Since $O'\subset U_{u}$ \ then by Corollary ~\ref{C:6} the map
$f_{/O'}=\Phi^{-1}\circ exp_{/\Phi(O')}\circ\Phi:O'\longrightarrow
O'$ is well defined and as $exp_{/\Phi(O')}$ is a diffeomorphism
then $f_{/O'}$ is a diffeomorphism. By Lemma ~\ref{L:1}, one has
$exp(\mathrm{g})=G$ and then:

\begin{align*}
 f(O')\cap\overline{G(u)} &  =\Phi^{-1}\circ exp(\Phi(O'))\cap
\overline{G(u)} \\
  \ & =\Phi^{-1}\left(
exp\left(\Phi(O')\right)\cap \overline{\Phi(G(u))}\right)\\
 \ &  =\Phi^{-1}\left(
exp\left(\Phi(O')\right)\cap \overline{G}\right)\\
 \ & =\Phi^{-1}\left(
exp\left(\Phi(O')\right)\cap \overline{exp(\mathrm{g})}\right)
\end{align*}

By Lemma ~\ref{L:11} and by (1),  we obtain

\begin{align*}
   f(O')\cap\overline{G(u)} & =\Phi^{-1}\circ
exp\left(\Phi(O')\cap \overline{\mathrm{g}}\right) \\
  \ & =\Phi^{-1}\circ
exp\left(\Phi(O')\cap
\Phi\left(\overline{\mathrm{g}_{u}}\right)\right)\\
 \ & =\Phi^{-1}\circ
exp\circ\Phi\left(O'\cap \overline{\mathrm{g}_{u}}\right)\\ \ &
=\Phi^{-1}\circ exp\circ\Phi\left(O'\cap V\right)\\
 \ &  =f(O'\cap V).
\end{align*}

\medskip

As  $f(O')$ is an open subset of $\mathbb{C}^{n}$ and $O'\cap V$
is an open subset of the real vector space $V$, then   $f(O')\cap
\overline{G(u)}$  is a manifold with dimension
$m=\mathrm{dim}(V)=\mathrm{dim}(\overline{\mathrm{g}_{u}})$. We
conclude that $G(u)$ is regular with order $m$. Since
$\mathbb{C}^{n}$ is a real vector space with dimension $2n$ and
$V$ is a real subspace of $\mathbb{C}^{n}$ with dimension $m$, so
$m\leq 2n$. We conclude the equivalence $(i)\Longleftrightarrow
(iii)$. $\hfill{\Box}$

\medskip

\subsection{{\bf Proof  of Corollary ~\ref{C:02}}}\
\\
$\diamond$ {\it Complex Case: } Suppose that
$\mathbb{K}=\mathbb{C}$.\ Let $G$ be an abelian subgroup of $GL(n,
\mathbb{C})$ and  $u\in \mathbb{C}^{n}$. By Theorem ~\ref{T:3},
the orbit $G(u)$ is regular with order $m$ if and only if
$\mathrm{dim}(\overline{\mathrm{g}_{u}})=m$. Since
 $\overline{\mathrm{g}_{u}}$ is a closed additive subgroup of $\mathbb{C}^{n}$ (Lemma ~\ref{L:1}.(i)).
 Then $\mathrm{dim}(\overline{\mathrm{g}_{u}})\geq 0$, so $G(u)$ is regular
with order $m\geq 0$. This proves the complex case.\
\\
\\
$\diamond$ {\it Real  Case:} Suppose that
$\mathbb{K}=\mathbb{R}$.\ Let $G$ be an abelian subgroup of $GL(n,
\mathbb{R})$ and  $x\in \mathbb{R}^{n}$. So $G$ is considered as
an abelian subgroup of $GL(n, \mathbb{C})$. By the above case,
$G(x)$ is regular with some order $m$, with $m\leq 2n$. Then there
exists an open subset $O=O_{1}+iO_{2}$ of $\mathbb{C}^{n}$ with
$O_{1}, O_{2}$ are open subsets of $\mathbb{R}^{n}$, such that
$\overline{G(x)}\cap O$ is a manifold with dimension $m$. One has
$0\in O_{2}$, since $G(x)\subset \mathbb{R}^{n}$. Then
$\overline{G(x)}\cap O=\overline{G(x)}\cap O_{1}$ and $m\leq n$.
It follows that $G(x)$ is a regular orbit in $\mathbb{R}^{n}$ with
order $m$. The proof is completed. $\hfill{\Box}$
\
\bigskip

\subsection{{\bf Proof of Corollary ~\ref{C:2}}}\

\begin{lem}\label{L:12}$($\cite{aAhM05} Corollary 1.3$)$. {\it If  \ $G$  \ has a locally dense orbit $\gamma$ in
$\mathbb{C}^{n}$ then  $\gamma$ is dense in $\mathbb{C}^{n}$.}
\end{lem}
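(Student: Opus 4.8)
The plan is to convert the purely topological hypothesis into a statement about a Lie--group orbit, then reduce to a connected group by a Baire argument, and finish with a connectedness/homogeneity step.

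First I would reduce the ambient space. Since $\gamma=G(u)$ is locally dense its closure has nonempty interior, and as $\gamma\subseteq E(u)=\mathrm{vect}(G(u))$, a proper subspace would have empty interior; hence $E(u)=\mathbb{C}^{n}$, $r(u)=n$ and $u\neq 0$. By Proposition~\ref{p:8} I may assume $G\subseteq\mathcal{K}^{*}_{\eta,r}(\mathbb{C})$. Write $\Omega=\overset{\circ}{\overline{\gamma}}\neq\emptyset$. Because each $A\in G$ is a homeomorphism with $A(\overline{\gamma})=\overline{\gamma}$, it maps $\Omega$ onto $\Omega$, so $\Omega$ is open and $G$-invariant. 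Regularity of order $2n$ means that near every point of $\gamma$ the set $\overline{\gamma}$ is a $2n$-dimensional real manifold, hence open; thus $\gamma\subseteq\Omega$ and $\overline{\gamma}=\overline{\Omega}$.

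Next I would replace $G$ by a Lie group. Set $H=\overline{G}\cap GL(n,\mathbb{C})$; since limits of commuting invertible matrices that remain invertible still commute and invert continuously, $H$ is a closed abelian subgroup of $GL(n,\mathbb{C})$, hence a Lie group, and $G(u)\subseteq Hu\subseteq\overline{G(u)}$ gives $\overline{Hu}=\overline{\gamma}$. The interior argument above shows $\Omega$ is $H$-invariant with $Hu\subseteq\Omega$, so $Hu$ is dense in $\Omega$. Because $GL(n,\mathbb{C})$ is second countable, $H$ has at most countably many connected components; writing $Hu=\bigcup_{A\in H/H_{0}}A(H_{0}u)$ as a countable union of translates of the identity-component orbit and applying Baire's theorem on the open set $\Omega$ (which the closures $A\,\overline{H_{0}u}$ cover), at least one such closed set has nonempty interior, whence $\overline{H_{0}u}$ does too. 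So it suffices to treat a connected abelian Lie group $H_{0}$ with $\overline{H_{0}u}$ of nonempty interior and to show $\overline{H_{0}u}=\mathbb{C}^{n}$.

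For the connected case I would use that $\exp\colon \mathcal{L}\to H_{0}$ is surjective (connected abelian), where $\mathcal{L}=\mathrm{Lie}(H_{0})$ is a real space of commuting matrices, so $H_{0}u=\{e^{X}u:\ X\in\mathcal{L}\}$. Simultaneously triangularizing $\mathcal{L}$ (the block form of $\mathcal{K}^{*}_{\eta,r}(\mathbb{C})$ is exactly adapted to this), the orbit is governed by the diagonal weights $X\mapsto \mu_{X}$; nonempty interior of $\overline{H_{0}u}$ forces these weights, together with the additive group $\overline{\mathrm{g}_{u}}$ attached to $u$ via Lemma~\ref{L:1} and Proposition~\ref{p:3}, to span all radial and angular directions. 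Concretely, the one-parameter flows $t\mapsto e^{tX}u$ lying in $\overline{\gamma}$ realise independent dilations/contractions and rotations, and since $\mathbb{C}^{n}$ is connected an $H_{0}$-invariant open set closed under enough such flows must be all of $\mathbb{C}^{n}$ --- the analogue of the fact that an open subgroup of a connected group is the whole group. Hence $\overline{H_{0}u}=\mathbb{C}^{n}$ and therefore $\overline{\gamma}=\overline{Hu}=\mathbb{C}^{n}$.

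The hard part is this last, connected step. One cannot argue by dimension, since a low-dimensional orbit may be dense in a torus; the genuine content is to convert nonempty interior into density using the homogeneity of the $H_{0}$-action and the connectedness of $\mathbb{C}^{n}$. The reductions to a Lie group and then, via second countability and Baire, to the identity component are the clean way to isolate this core difficulty, and Proposition~\ref{p:3} supplies the additive-group bookkeeping needed to verify that the required radial and angular flows are indeed present.
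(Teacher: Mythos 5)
First, a point of comparison: the paper does not prove this lemma at all --- it is imported verbatim as Corollary 1.3 of \cite{aAhM05}, so there is no internal proof to match your argument against; you are attempting to reprove a cited external result. Judged on its own, your attempt has a genuine gap at exactly the place you flag as ``the hard part''. After reducing to a connected abelian Lie group $H_{0}$ with $\overline{H_{0}u}$ of nonempty interior, you assert that ``an $H_{0}$-invariant open set closed under enough such flows must be all of $\mathbb{C}^{n}$'' by analogy with the fact that an open subgroup of a connected group is the whole group. But $\overline{H_{0}u}$ is not a subgroup of anything, ``enough such flows'' is never quantified, and the implication ``nonempty interior $\Rightarrow$ dense'' in the connected case is precisely the content of the lemma; restating it with the words ``homogeneity'' and ``connectedness'' does not prove it. The mechanism that actually works (and that this paper uses everywhere else, e.g.\ in the proof of Theorem~\ref{T:3}) is additive: one passes via $\Phi_{u}$ and $\exp$ to the subgroup $\mathrm{g}_{u}$ of $(\mathbb{C}^{n},+)$, invokes Proposition~\ref{p:3} to write $\overline{\mathrm{g}_{u}}=\left(\overline{\mathrm{g}_{u}}\cap W\right)\oplus V$ with $V$ a real subspace, and shows that nonempty interior of the orbit closure forces $V=\mathbb{C}^{n}$, whence density. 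You name Proposition~\ref{p:3} as ``bookkeeping'' but never carry out this step, which is the whole proof.

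There is a second, smaller gap in the Baire reduction. You write $Hu=\bigcup_{A}A(H_{0}u)$ over countably many cosets and claim that the closed sets $A\,\overline{H_{0}u}$ cover $\Omega$. They need not: the closure of a countable union can be strictly larger than the union of the closures, and a dense set can be a countable union of nowhere dense closed sets (as $\mathbb{Q}$ is in $\mathbb{R}$), so from ``$Hu$ is dense in $\Omega$'' one cannot conclude that some $A\,\overline{H_{0}u}$ has nonempty interior without using more of the group structure than you invoke. Both gaps would need to be filled before the argument stands; as written, the core implication is assumed rather than proved.
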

\medskip

 \begin{proof}[Proof of Corollary ~\ref{C:2}]\
\\
 (i)  If
 $\overline{G(u)}=\mathbb{R}^{n}$ then $\overline{G(u)}$ is a manifold with dimension $n$, so $G(u)$ is
 regular with order $m=n$.

 Conversely, if $G(u)$ is  regular with order $m=n$, then $\overline{G(u)}\cap O$ is a
 manifold with order $m=n$, for some open subset $O$ of $\mathbb{R}^{n}$. Hence $\overline{G(u)}\cap O$ is an open
 subset of $\mathbb{R}^{n}$. Therefore $G(u)$ is locally dense.
\\
\\
(ii) We use the same proof of (i) and by
 Lemma ~\ref{L:12} we have  $\overline{G(u)}=\mathbb{C}^{n}$.\
\end{proof}

\bigskip

\subsection{{\bf Proof of theorem ~\ref{T:4}}} If
$\overline{G(u)}$ is a vector subspace then
$\overline{G(u)}=E(u)$, so $G(u)$ is regular with order $2r(u)$.
\\
Conversely, if $G(u)$ is regular with order $2r(u)$, then
$\overline{G(u)}\cap O$ is a manifold with dimension $2r(u)$, for
some open set $O$. Since dim$(E(u))=2r(u)$ over $\mathbb{R}$, then
$\overline{G(u)}\cap O$ is an open subset of $E(u)$. So $G(u)$ is
locally dense in $E(u)$. By lemma ~\ref{L:12} applied on
$G_{/E(u)}$\ we have $\overline{G(u)}=E(u)$. $\hfill{\Box}$
\
\\
\\
\subsection{Algebric Lemmas}
\begin{lem}\label{L:13}$($\cite{mW}, Proposition 4.3$)$. Let $H = \mathbb{Z}u_{1}+.....+\mathbb{Z}u_{p}$
with $u_{k}=[u_{k,1},\dots, u_{k,n}]^{T}\in\mathbb{R}^{n}$, $k =
1,..., p$. Then $H$ is dense in $\mathbb{R}^{n}$ if and only if
for every $(s_{1},...,s_{p})\in
 \mathbb{Z}^{p}-\{0\}$ :
$$rank\left(\left[\begin{array}{cccc }
 u_{1,1} &\dots  &\dots  & u_{p,1} \\
 \vdots & \vdots & \vdots & \vdots \\
 u_{1,n} &\dots  &\dots  & u_{p,n} \\
  s_{1} &\dots &\dots  & s_{p }
 \end{array}\right]\right) =\ n+1.$$
 \end{lem}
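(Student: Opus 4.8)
The plan is to pass from the topological condition ``$H$ is dense'' to a purely algebraic one about an annihilator, and then to translate that annihilator condition into the stated rank condition by elementary linear algebra. Write $A$ for the $n\times p$ real matrix with columns $u_{1},\dots,u_{p}$, and for $s=(s_{1},\dots,s_{p})\in\mathbb{Z}^{p}$ let $M_{s}$ denote the $(n+1)\times p$ matrix obtained by appending $s$ as an $(n+1)$-st row; this is exactly the matrix appearing in the statement. Introduce the annihilator
$$H^{\perp}=\left\{\xi\in\mathbb{R}^{n}\ :\ \langle\xi,u_{k}\rangle\in\mathbb{Z}\ \text{ for } 1\leq k\leq p\right\}.$$

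First I would establish the key fact that $H$ is dense in $\mathbb{R}^{n}$ if and only if $H^{\perp}=\{0\}$. The characters of $\mathbb{R}^{n}$ are the maps $\chi_{\xi}:x\mapsto e^{2i\pi\langle\xi,x\rangle}$, and $\chi_{\xi}$ is trivial on $H$ precisely when $\xi\in H^{\perp}$. If $H^{\perp}\ni\xi\neq 0$, then $\chi_{\xi}$ is a nonconstant continuous function equal to $1$ on $H$, hence on $\overline{H}$, so $\overline{H}\neq\mathbb{R}^{n}$. Conversely, if $H$ is not dense, then $\mathbb{R}^{n}/\overline{H}$ is a nontrivial locally compact abelian group and Pontryagin duality furnishes a nontrivial character killing $\overline{H}\supset H$, i.e.\ a nonzero element of $H^{\perp}$. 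Alternatively, this equivalence can be read off from the structure of closed additive subgroups in Proposition~\ref{p:3}. This step---the multidimensional Kronecker--Weyl criterion---is the only nonelementary input, and it is where I expect the real work to lie.

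It then remains to show, by linear algebra alone, that $H^{\perp}\neq\{0\}$ if and only if there exists $s\in\mathbb{Z}^{p}\setminus\{0\}$ with $\mathrm{rank}(M_{s})\leq n$. For the forward direction, pick $\xi\neq 0$ in $H^{\perp}$ and set $s_{k}=\langle\xi,u_{k}\rangle\in\mathbb{Z}$. If $s\neq 0$, then its row is the combination $\sum_{i}\xi_{i}(\text{$i$-th row of }A)$, so $\mathrm{rank}(M_{s})=\mathrm{rank}(A)\leq n$; if $s=0$, then $\xi$ is orthogonal to every $u_{k}$, so $\mathrm{rank}(A)\leq n-1$ and any nonzero $s'$ gives $\mathrm{rank}(M_{s'})\leq\mathrm{rank}(A)+1\leq n$. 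For the converse, suppose $\mathrm{rank}(M_{s})\leq n$ for some $s\neq 0$. If $\mathrm{rank}(A)<n$ there is a nonzero $\xi$ orthogonal to all columns, whence $\xi\in H^{\perp}\setminus\{0\}$; if $\mathrm{rank}(A)=n$, the first $n$ rows already span an $n$-dimensional row space, so the bottom row $s$ lies in their span, giving $\xi$ with $\langle\xi,u_{k}\rangle=s_{k}$ for all $k$, and $\xi\neq 0$ because $s\neq 0$, so again $\xi\in H^{\perp}\setminus\{0\}$.

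Finally I would assemble the equivalences by contraposition: combining the two facts above shows that $H$ fails to be dense exactly when some nonzero $s$ makes $\mathrm{rank}(M_{s})\leq n$, that is, $H$ is dense exactly when $\mathrm{rank}(M_{s})=n+1$ for every $s\in\mathbb{Z}^{p}\setminus\{0\}$ (an $(n+1)\times p$ matrix having rank at most $n+1$ automatically). As a consistency check, this forces $p\geq n+1$ for density, matching the classical fact that a subgroup of $\mathbb{R}^{n}$ generated by at most $n$ vectors is never dense.
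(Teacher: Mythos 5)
Your proof is correct. Note, however, that the paper does not prove this lemma at all: it is quoted as an external result (Waldschmidt, Proposition 4.3) and used as a black box, so there is no in-paper argument to compare against. What you have written is essentially the canonical proof of the Kronecker--Weyl density criterion: reduce density of $H$ to triviality of the annihilator $H^{\perp}=\{\xi:\langle\xi,u_{k}\rangle\in\mathbb{Z}\ \forall k\}$ via characters, then convert $H^{\perp}\neq\{0\}$ into the existence of a nonzero integer vector $s$ with $\mathrm{rank}(M_{s})\leq n$ by elementary row-space arguments. Both halves check out: the case split on $s=0$ versus $s\neq 0$ in the forward direction, and on $\mathrm{rank}(A)<n$ versus $\mathrm{rank}(A)=n$ in the converse, covers all situations, and your observation that the criterion silently forces $p\geq n+1$ is a worthwhile sanity check. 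The one nonelementary ingredient, the fact that a proper closed subgroup of $\mathbb{R}^{n}$ is annihilated by a nontrivial character, is correctly flagged; as you note, it can be obtained without Pontryagin duality from the structure theorem for closed additive subgroups (Proposition~\ref{p:3} of this paper), by taking $\xi$ in the dual lattice of $\overline{H}\cap W$ extended by zero on $V$, which makes your argument self-contained relative to the paper's own toolkit.
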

\smallskip

\begin{cor}\label{C:7} Let $p\geq n+1$ and  $H = \mathbb{Z}u_{1}+\dots+\mathbb{Z}u_{p}$, $u_{k}\in \mathbb{R}^{n}$, $1\leq k \leq p$,
 such that  $(u_{1},\dots,u_{n})$ is a basis  of  $\mathbb{R}^{n}$.\
 If there exists $0\leq m\leq n$ such that $u_{k}=\underset{j=1}{\overset{m}{\sum}}\alpha_{k,j}u_{j+n-m}$,
for every $n+1\leq k \leq p$. Then the following assertions are
equivalent:\
\begin{enumerate}
\item [(i)] dim$(\overline{H})=m$.\
\item [(ii)] $u_{1},\dots,u_{p}$ satisfies property $\mathcal{D}(m)$.
\end{enumerate}
 \end{cor}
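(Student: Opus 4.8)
The plan is to peel off the ``free'' part of $H$ and reduce the whole statement to a density condition inside the $m$-dimensional subspace spanned by the last $m$ basis vectors. Put $L=\mathbb{R}u_{n-m+1}\oplus\dots\oplus\mathbb{R}u_{n}$ and $L'=\mathbb{R}u_{1}\oplus\dots\oplus\mathbb{R}u_{n-m}$, so that $\mathbb{R}^{n}=L'\oplus L$ because $(u_{1},\dots,u_{n})$ is a basis. By hypothesis $u_{k}\in L$ for every $n+1\leq k\leq p$, hence writing $H_{1}=\sum_{k=1}^{n-m}\mathbb{Z}u_{k}\subset L'$ and $H_{2}=\sum_{k=n-m+1}^{p}\mathbb{Z}u_{k}\subset L$ one has $H=H_{1}+H_{2}$.

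The first step is to establish $\overline{H}=\overline{H_{1}}\oplus\overline{H_{2}}$. The inclusion $\overline{H_{1}}+\overline{H_{2}}\subseteq\overline{H}$ is immediate from continuity of addition. For the reverse inclusion, let $\pi'$ and $\pi$ be the (continuous) projections of $\mathbb{R}^{n}=L'\oplus L$ onto $L'$ and $L$; if $x=\lim_{j}(a_{j}+b_{j})$ with $a_{j}\in H_{1}$ and $b_{j}\in H_{2}$, then $a_{j}=\pi'(a_{j}+b_{j})\to\pi'(x)$ and $b_{j}=\pi(a_{j}+b_{j})\to\pi(x)$, so $\pi'(x)\in\overline{H_{1}}$, $\pi(x)\in\overline{H_{2}}$ and $x\in\overline{H_{1}}+\overline{H_{2}}$. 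Since $\overline{H_{1}}\subseteq L'$, $\overline{H_{2}}\subseteq L$ and $L'\cap L=\{0\}$, the sum is direct and $\dim(\overline{H})=\dim(\overline{H_{1}})+\dim(\overline{H_{2}})$.

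Next I would dispose of $H_{1}$. As $(u_{1},\dots,u_{n-m})$ is a basis of $L'$, its generators are linearly independent, so $H_{1}$ is a full-rank lattice, hence discrete and closed in $L'$ (see Proposition~\ref{p:2}); thus $\overline{H_{1}}=H_{1}$ contains no line and $\dim(\overline{H_{1}})=0$. Consequently $\dim(\overline{H})=\dim(\overline{H_{2}})$, and the equivalence to prove collapses to $\dim(\overline{H_{2}})=m\Longleftrightarrow$ property $\mathcal{D}(m)$.

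Finally, $\overline{H_{2}}$ is a closed additive subgroup of $L\cong\mathbb{R}^{m}$. Applying Proposition~\ref{p:3} inside $L$, its real dimension equals the dimension of its maximal vector subspace $V\subseteq L$; as $\dim L=m$, one has $\dim(\overline{H_{2}})=m$ exactly when $V=L$, that is exactly when $\overline{H_{2}}=L$, i.e. exactly when $H_{2}=\sum_{k=n-m+1}^{p}\mathbb{Z}u_{k}$ is dense in $\mathbb{R}u_{n-m+1}\oplus\dots\oplus\mathbb{R}u_{n}$. By the definition of property $\mathcal{D}(m)$, which via Lemma~\ref{L:13} is precisely equivalent to this density, assertions (i) and (ii) coincide. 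I expect the main obstacle to be the direct-sum identity $\overline{H}=\overline{H_{1}}\oplus\overline{H_{2}}$: this is the one genuinely topological point, where both the splitting $\mathbb{R}^{n}=L'\oplus L$ and the continuity of the projections are essential, whereas everything afterwards is the ready-made density criterion $\mathcal{D}(m)$ together with the structure theorem for closed subgroups.
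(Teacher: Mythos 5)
Your proof is correct and follows essentially the same route as the paper: split off the lattice part $\mathbb{Z}u_{1}+\dots+\mathbb{Z}u_{n-m}$, reduce to density of $\mathbb{Z}u_{n-m+1}+\dots+\mathbb{Z}u_{p}$ in $E=\mathbb{R}u_{n-m+1}\oplus\dots\oplus\mathbb{R}u_{n}$, and invoke Lemma~\ref{L:13} on $E$. The only difference is that you carefully justify the identity $\overline{H}=\overline{H_{1}}\oplus\overline{K}$ via the projections, which the paper simply asserts.
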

\bigskip

\begin{proof} Let $E=\mathbb{R}u_{n-m+1}\oplus\dots \oplus\mathbb{R}u_{n}$. We replace $\mathbb{R}^{n}$ by $E$
in Lemma~\ref{L:13} and  we obtain: $u_{1},\dots,u_{p}$ satisfies
property $\mathcal{D}(m)$ if and only if
$K=\mathbb{Z}u_{n-m+1}+.....+\mathbb{Z}u_{p}$ is dense in $E$ and
this is equivalent to $dim(\overline{H})=m$ since
$\overline{H}=\mathbb{Z}u_{1}+\dots+\mathbb{Z}u_{n-m}+\overline{K}$.
\end{proof}
\bigskip

\subsection{Proof of Theorem ~\ref{T:4}}
Denote by`:\
\\
- $u_{0}=[e_{1,1},\dots,e_{r,1}]^{T}$ and
$e_{k,1}=[1,0,\dots,0]^{T}\in\mathbb{C}^{n_{k}}$, $k=1,\dots, r$.\
\\
- $v_{0}=Pu_{0}$, where $P\in GL(n, \mathbb{C})$ is defined in
Proposition ~\ref{p:41} so that $P^{-1}GP\subset
\mathcal{K}^{*}_{\eta,r}(\mathbb{C})$.\
\\

\begin{prop}\label{p:017}$($\cite{aAh-M05}, Theorem 1.3$)$ Let $G$ be an abelian subgroup of $GL(n,\mathbb{C})$ generated
by $A_{1},\dots,A_{p}$. Let $B_{1},\dots,B_{p}\in \mathrm{g}$ such
that $A_{k} = e^{B_{k}}$ , $k = 1,\dots, p$. Then \
$\mathrm{g}_{v_{0}} =
\underset{k=1}{\overset{p}{\sum}}\mathbb{Z}B_{k}v_{0} +
\underset{k=1}{\overset{r}{\sum}} 2i\pi \mathbb{Z}Pe^{(k)}.$
\end{prop}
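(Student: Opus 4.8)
The plan is to reduce to the normal form and then describe $\mathrm{g}=\exp^{-1}(G)\cap\mathcal{K}_{\eta,r}(\mathbb{C})$ explicitly, evaluating at $u_{0}$ only at the very end. First I would conjugate by $P$: setting $\widetilde{G}=P^{-1}GP\subset\mathcal{K}^{*}_{\eta,r}(\mathbb{C})$, $\widetilde{B_{k}}=P^{-1}B_{k}P$ and $\widetilde{\mathrm{g}}=\exp^{-1}(\widetilde{G})\cap\mathcal{K}_{\eta,r}(\mathbb{C})$, one has $\mathrm{g}=P\widetilde{\mathrm{g}}P^{-1}$, hence $\mathrm{g}_{v_{0}}=\mathrm{g}(Pu_{0})=P\widetilde{\mathrm{g}}_{u_{0}}$ and $B_{k}v_{0}=P(\widetilde{B_{k}}u_{0})$. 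Thus it suffices to prove the formula in normal form and then multiply through by $P$, so from now on I assume $G\subset\mathcal{K}^{*}_{\eta,r}(\mathbb{C})$, $P=I$ and $v_{0}=u_{0}=[e_{1,1},\dots,e_{r,1}]^{T}$.

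The heart of the argument is to identify $\mathrm{g}$ itself as
$$\mathrm{g}=\underset{k=1}{\overset{p}{\sum}}\mathbb{Z}B_{k}+\underset{k=1}{\overset{r}{\sum}}2i\pi\mathbb{Z}D_{k},$$
where $D_{k}=\mathrm{diag}(0,\dots,I_{n_{k}},\dots,0)$ is the projection onto the $k$-th block. Since $G=\langle A_{1},\dots,A_{p}\rangle$ is abelian, every element of $G$ has the form $\underset{k}{\prod}A_{k}^{m_{k}}$ with $m\in\mathbb{Z}^{p}$. For the inclusion $\supseteq$, any $B=\underset{k}{\sum}m_{k}B_{k}+2i\pi\underset{k}{\sum}\ell_{k}D_{k}$ lies in $\mathcal{K}_{\eta,r}(\mathbb{C})$ and, because the $B_{k}$ and $D_{k}$ pairwise commute, satisfies $e^{B}=\underset{k}{\prod}A_{k}^{m_{k}}\in G$, using that $e^{2i\pi\ell_{k}D_{k}}=I+(e^{2i\pi\ell_{k}}-1)D_{k}=I$ for the idempotent $D_{k}$. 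For the inclusion $\subseteq$, given $B\in\mathrm{g}$ write $e^{B}=\underset{k}{\prod}A_{k}^{m_{k}}=e^{\sum m_{k}B_{k}}$; then $C:=B-\underset{k}{\sum}m_{k}B_{k}\in\mathcal{K}_{\eta,r}(\mathbb{C})$ and $e^{C}=I$.

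The key computation is therefore the kernel $\ker(\exp)\cap\mathcal{K}_{\eta,r}(\mathbb{C})$. Working block by block, a matrix $C_{k}\in\mathbb{T}_{n_{k}}(\mathbb{C})$ decomposes as $\mu_{k}I+N_{k}$ with $N_{k}$ strictly lower triangular (hence nilpotent), so $e^{C_{k}}=e^{\mu_{k}}e^{N_{k}}$ with $e^{N_{k}}$ unipotent. Comparing diagonals in $e^{C_{k}}=I$ forces $e^{\mu_{k}}=1$, i.e. $\mu_{k}\in 2i\pi\mathbb{Z}$, and then $e^{N_{k}}=I$; since $N\mapsto e^{N}$ is injective on nilpotent matrices (its inverse being the terminating series $\log(I+\,\cdot\,)$), this yields $N_{k}=0$. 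Hence $C_{k}=2i\pi\ell_{k}I_{n_{k}}$ and $\ker(\exp)\cap\mathcal{K}_{\eta,r}(\mathbb{C})=\underset{k=1}{\overset{r}{\sum}}2i\pi\mathbb{Z}D_{k}$, which completes the description of $\mathrm{g}$. Evaluating at $u_{0}$ and using $D_{k}u_{0}=e^{(k)}$ gives $\mathrm{g}_{u_{0}}=\underset{k}{\sum}\mathbb{Z}B_{k}u_{0}+\underset{k}{\sum}2i\pi\mathbb{Z}e^{(k)}$, and conjugating back by $P$ produces the claimed formula for $\mathrm{g}_{v_{0}}$.

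I expect the main obstacle to be justifying that the elements of $\mathrm{g}$ commute pairwise, since this is exactly what legitimizes splitting the exponentials in both inclusions above (in particular $e^{B}=e^{\sum m_{k}B_{k}}$ and $e^{C}=I$). In the general, non-Toeplitz algebra $\mathcal{K}_{\eta,r}(\mathbb{C})$ this does not follow automatically from $G$ being abelian; it is precisely the content of the structure theory of \cite{aAh-M05}, reflected here in Lemma~\ref{L:1}, which asserts both $\exp(\mathrm{g})=G$ and that $\mathrm{g}_{u}$ is an additive group (the additive closure of $\mathrm{g}$ being equivalent to the multiplicativity of $\exp$ on it). Once this commutativity is granted, the block-diagonal kernel computation and the bookkeeping with $P$ and $u_{0}$ are routine.
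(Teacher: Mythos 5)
The paper gives no proof of this proposition at all: it is quoted verbatim from \cite{aAh-M05}, Theorem 1.3. Your argument is therefore a reconstruction rather than a variant of anything in the text, and its skeleton is sound: conjugate to the normal form, identify $\mathrm{g}$ as $\underset{k}{\sum}\mathbb{Z}B_{k}$ plus the ambiguity of $\exp$ inside $\mathcal{K}_{\eta,r}(\mathbb{C})$, compute that ambiguity block by block via the decomposition $C_{k}=\mu_{k}I+N_{k}$ into scalar plus nilpotent parts, and evaluate at $u_{0}$ using $D_{k}u_{0}=e^{(k)}$ before conjugating back by $P$.

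The one place where you point at a reference instead of giving an argument is exactly the place that needs one. Pairwise commutativity of $B_{1},\dots,B_{p}$ is what legitimizes $e^{\sum m_{k}B_{k}}=\prod A_{k}^{m_{k}}$ in both inclusions, and Lemma~\ref{L:1} does not assert it: part (i) says only that the set of vectors $\mathrm{g}_{u}$ is an additive subgroup of $\mathbb{C}^{n}$, a pointwise statement about images, not a statement that the matrices of $\mathrm{g}$ commute or that $\exp$ is multiplicative on $\mathrm{g}$. Fortunately the commutativity admits a short proof inside your own framework: in each block write $B_{k}=\mu_{k}I+N_{k}$; since the $A_{k}$ commute, so do their unipotent parts $e^{N_{k}}$ (the scalar factors cancel), and $N_{k}=\log(e^{N_{k}})$ is a polynomial in $e^{N_{k}}$, so the $N_{k}$ pairwise commute and hence so do the $B_{k}$. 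With that inserted, I would also rephrase the $\subseteq$ direction: concluding $e^{C}=I$ for $C=B-\sum m_{k}B_{k}$ from $e^{B}=e^{\sum m_{k}B_{k}}$ again presupposes that $B$ commutes with $\sum m_{k}B_{k}$; it is cleaner to run your fiber computation directly, since $e^{B}=e^{B'}$ with both matrices in a block $\mathbb{T}_{n_{j}}(\mathbb{C})$ forces equal nilpotent parts and scalar parts differing by an element of $2i\pi\mathbb{Z}$, with no commutativity needed. These are repairs rather than changes of strategy; once they are made the proof is correct and supplies a self-contained justification for a statement the paper only cites.
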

\bigskip

By construction of $\mathcal{K}^{*}_{\eta,r}(\mathbb{C})$, remark
that for every $u\in U$ there exists $Q\in
\mathcal{K}^{*}_{\eta,r}(\mathbb{C})$ such that $Qu_{0}=u$. Then
as a consequence of Proposition ~\ref{p:017} we get the following
Corollary:
\\
\begin{cor}\label{C:17} Let $G$ be an abelian subgroup of $\mathcal{K}^{*}_{\eta,r}(\mathbb{C})$ generated
by $A_{1},\dots,A_{p}$. Let $B_{1},\dots,B_{p}\in \mathrm{g}$ such
that $A_{k} = e^{B_{k}}$ , $k = 1,\dots, p$. Then for every $u\in
U$,  $\mathrm{g}_{u} =
\underset{k=1}{\overset{p}{\sum}}\mathbb{Z}B_{k}u +
\underset{k=1}{\overset{r}{\sum}} 2i\pi \mathbb{Z}Qe^{(k)},$ where
$Q\in \mathcal{K}^{*}_{\eta,r}(\mathbb{C})$ such that $Qu_{0}=u$.
\end{cor}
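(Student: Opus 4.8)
The plan is to deduce the formula from Proposition~\ref{p:017} by transporting the base point $u_{0}$ to $u$ through a conjugation, rather than pushing the generating set forward directly (which fails, since an arbitrary $Q\in\mathcal{K}^{*}_{\eta,r}(\mathbb{C})$ need not commute with the $B_{k}$). Since $G\subset\mathcal{K}^{*}_{\eta,r}(\mathbb{C})$, one may take $P=I$ in Proposition~\ref{p:017}, so that $v_{0}=u_{0}$ and the proposition already supplies the formula at the single point $u_{0}$, namely $\mathrm{g}_{u_{0}}=\sum_{k=1}^{p}\mathbb{Z}B_{k}u_{0}+\sum_{k=1}^{r}2i\pi\mathbb{Z}e^{(k)}$. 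Fix now $u\in U$ and, using the remark preceding the statement, choose $Q\in\mathcal{K}^{*}_{\eta,r}(\mathbb{C})$ with $Qu_{0}=u$.

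Next I would introduce the conjugated group $G'=Q^{-1}GQ$. Because $Q$ and the $A_{k}$ all lie in the group $\mathcal{K}^{*}_{\eta,r}(\mathbb{C})$, the group $G'$ is again an abelian subgroup of $\mathcal{K}^{*}_{\eta,r}(\mathbb{C})$, generated by $A'_{k}=Q^{-1}A_{k}Q$. Setting $B'_{k}=Q^{-1}B_{k}Q$ gives $e^{B'_{k}}=Q^{-1}e^{B_{k}}Q=A'_{k}$, so the $B'_{k}$ are logarithms of the generators of $G'$; to legitimately apply Proposition~\ref{p:017} to $G'$ (again with $P=I$) I must check that $B'_{k}\in\mathrm{g}'$, where $\mathrm{g}'=\exp^{-1}(G')\cap\mathcal{K}_{\eta,r}(\mathbb{C})$. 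This reduces to the conjugation-invariance $Q^{-1}\mathcal{K}_{\eta,r}(\mathbb{C})Q=\mathcal{K}_{\eta,r}(\mathbb{C})$. Granting this, Proposition~\ref{p:017} yields $\mathrm{g}'_{u_{0}}=\sum_{k=1}^{p}\mathbb{Z}B'_{k}u_{0}+\sum_{k=1}^{r}2i\pi\mathbb{Z}e^{(k)}$.

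The final step is to transport back. From $\exp^{-1}(G')=Q^{-1}\exp^{-1}(G)Q$ together with the same conjugation-invariance of $\mathcal{K}_{\eta,r}(\mathbb{C})$, I obtain $\mathrm{g}'=Q^{-1}\mathrm{g}Q$, hence $\mathrm{g}'_{u_{0}}=\{Q^{-1}BQu_{0}:B\in\mathrm{g}\}=\{Q^{-1}Bu:B\in\mathrm{g}\}=Q^{-1}\mathrm{g}_{u}$, using $Qu_{0}=u$. Multiplying the displayed expression for $\mathrm{g}'_{u_{0}}$ on the left by $Q$ and simplifying $QB'_{k}u_{0}=B_{k}Qu_{0}=B_{k}u$ and $Q(2i\pi e^{(k)})=2i\pi Qe^{(k)}$ produces exactly $\mathrm{g}_{u}=\sum_{k=1}^{p}\mathbb{Z}B_{k}u+\sum_{k=1}^{r}2i\pi\mathbb{Z}Qe^{(k)}$, as claimed. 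Note that the conjugations cancel precisely because the base point is translated by the same $Q$.

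The step I expect to be the main obstacle is the conjugation-invariance $Q^{-1}\mathcal{K}_{\eta,r}(\mathbb{C})Q=\mathcal{K}_{\eta,r}(\mathbb{C})$ for $Q\in\mathcal{K}^{*}_{\eta,r}(\mathbb{C})$; it is what simultaneously guarantees $B'_{k}\in\mathrm{g}'$ and $\mathrm{g}'=Q^{-1}\mathrm{g}Q$. It follows from the block-diagonal structure $\mathcal{K}_{\eta,r}(\mathbb{C})=\mathbb{T}_{n_{1}}(\mathbb{C})\oplus\dots\oplus\mathbb{T}_{n_{r}}(\mathbb{C})$, since conjugating a lower-triangular matrix with constant diagonal by an invertible matrix of the same shape preserves both lower-triangularity and the single diagonal eigenvalue, so each $\mathbb{T}_{n_{k}}(\mathbb{C})$ is stable under conjugation by $\mathbb{T}^{*}_{n_{k}}(\mathbb{C})$. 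Everything else is routine bookkeeping.
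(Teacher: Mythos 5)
Your proof is correct and follows essentially the route the paper intends: the paper offers no written argument beyond citing Proposition~\ref{p:017} together with the remark that some $Q\in\mathcal{K}^{*}_{\eta,r}(\mathbb{C})$ sends $u_{0}$ to $u$, and your conjugation argument (equivalently, applying Proposition~\ref{p:017} with $P$ replaced by $Q$, which leaves $\mathrm{g}$ unchanged) is the natural way to fill in that step. The point you isolate as the crux --- stability of $\mathcal{K}_{\eta,r}(\mathbb{C})$ under conjugation by $\mathcal{K}^{*}_{\eta,r}(\mathbb{C})$, which gives $\mathrm{g}'=Q^{-1}\mathrm{g}Q$ and hence $\mathrm{g}'_{u_{0}}=Q^{-1}\mathrm{g}_{u}$ --- is indeed exactly what makes the transport from $u_{0}$ to $u$ legitimate.
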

\bigskip

\begin{proof}[Proof of Theorem~\ref{T:4}]The proof of Theorem ~\ref{T:4} results from Theorem ~\ref{T:2},
Corollary ~\ref{C:7} and Corollary ~\ref{C:17}.
\end{proof}
\bigskip

\section{{\bf  Examples}}

Let $\mathbb{D}_{n}(\mathbb{C}) =
\left\{A=\mathrm{diag}(a_{1},\dots,a_{n})\ : \ \ a_{k}\in
\mathbb{C}^{*},\ 1\leq k \leq n \ \right\}$\ and let $G$ be an
abelian subgroup of $\mathbb{D}_{n}(\mathbb{C})$. In this case we
have \  $\widetilde{G}=G$ is a subgroup of
$\mathcal{K}_{(1,\dots,1),n}(\mathbb{C})$\ \ and \ \ $r_{G}=n$.

\medskip

\begin{exe} Let $G$ be the group generated by $A_{k}=\mathrm{diag}(\lambda_{k,1}e^{i\alpha_{k,1
}},\dots,\lambda_{k,n}e^{i\alpha_{k,n }})$, $k=1,\dots,p$,  where
$\lambda_{k,j}\in \mathbb{R}^{*}_{+}$,\  $\alpha_{k,j}\in
\mathbb{R}$, $1\leq j \leq n$. Let $u=[x_{1},\dots, x_{n}]^{T}\in
\mathbb{C}^{n}$, then the following assertions are equivalent:
\begin{enumerate}
\item [(i)]  $G(u)$  is regular with order $m$.
\item [(ii)] \  $u_{k}=[(log\lambda_{k,1}+i\alpha_{k,1
})x_{1},\dots,(log\lambda_{k,n}+i\alpha_{k,n })x_{n}]^{T}$, $1\leq
k \leq p$ \ with $2i\pi e_{1},\dots,2i\pi e_{n}$, satisfies
$\mathcal{D}(m)$.
\end{enumerate}
\end{exe}

\begin{proof}We let  $B_{k}=\mathrm{diag}(log\lambda_{k,1}+i\alpha_{k,1
},\dots,log\lambda_{k,n}+i\alpha_{k,n })$,

 One has $e^{B_{k}} = A_{k}$ and $B_{k}\in\mathbb{D}_{n}(\mathbb{C})$, $1 \leq k \leq p$. Then $B_{k}\in \mathrm{g}$.
  The result follows then from Theorem  ~\ref{T:4}.
\end{proof}
\medskip

\bigskip

 \begin{exe} $($\cite{aAhM05}, Example 6.2$)$ Let $G$ be the subgroup of $GL(4,\mathbb{R})$ generated by
$$A=\left[\begin{array}{cccc}
  1 & 0 & 0 &0 \\
  0 &1 &0 & 0 \\
  0 & 0 & 1 & 0 \\
  1 & 0 & 0 & 1
\end{array}\right]\ \  \mathrm{and} \ \ \ B = \left[\begin{array}{cccc}
  1 & 0 & 0 &0 \\
  0 &1 &0 & 0 \\
  0 & 0 & 1 & 0 \\
  0 & 1 & 0 & 1
\end{array}\right].$$ Then:

\begin{itemize}
  \item [i)] if $u\in(\mathbb{Q}^{*})^{2}\times\mathbb{R}^{2}$, $G(u)$ is
  discrete. So $G(u)$ is regular with order 0.
  \item [ii)] if $u\in\mathbb{Q}^{*}\times(\mathbb{R}\backslash\mathbb{Q})\times\mathbb{R}^{2}$, $G(u)$ is dense in a straight
  line. So $G(u)$ is regular with order 1.
 \end{itemize}
 \end{exe}
 \bigskip

In this example, $G$ is considered as a subgroup of
$GL(4,\mathbb{C})$. We have $\widetilde{G}=G$ is a subgroup of
$\mathbb{T}^{*}_{4}(\mathbb{C})$ and
$G(e_{1})=\left\{[1,0,0,n+m]^{T},\ \ \ n,m\in\mathbb{Z}\right\}$,
then $E(e_{1})=\mathbb{C}e_{1}+\mathbb{C}e_{4}$ and
$U_{e_{1}}=\mathbb{C}^{*}e_{1}+\mathbb{C}e_{4}$. So $E(e_{1})\neq
\mathbb{C}^{n}$.
\\
Let $B_{1}=A_{1}-I_{4}$ and $B_{2}=A_{2}-I_{4}$. Since
$B_{1}^{2}=B_{2}^{2}=0$, so  $e^{B_{1}}=A_{1}$ and
$e^{B_{2}}=A_{2}$. By Theorem ~\ref{T:4}, \
$\mathrm{g}_{u}=\mathbb{Z}B_{1}u+\mathbb{Z}B_{2}u+2i\pi \mathbb{Z}
e_{1}$.
\
\\
For every $u=[x,y,z,t]^{T}\in\mathbb{R}^{*}\times \mathbb{R}^{3}$,
we have $\mathrm{g}_{u}= (\mathbb{Z}x+\mathbb{Z}y)e_{4}+2i\pi
\mathbb{Z}e_{1}$, then: \
 \\
 -if $\frac{y}{x}\notin \mathbb{Q}$,
then $\overline{\mathrm{g}_{u}}=\mathbb{R}e_{4}+\mathbb{Z}e_{1}$,
so dim$(\overline{\mathrm{g}_{u}})=1$. By Theorem ~\ref{T:4},
$G(u)$ is regular with order 1.\
\\
 -if $\frac{y}{x}\in \mathbb{Q}$,
then $\overline{\mathrm{g}_{u}}=\mathbb{Z}ae_{4}+\mathbb{Z}e_{1}$,
for some $a\in \mathbb{R}$, so dim$(\overline{\mathrm{g}_{u}})=0$.
By Theorem ~\ref{T:4}, $G(u)$ is regular with order 0.\
$\hfill{\Box}$
\bigskip

 A simple example for $n=1$ and $\mathbb{K}=\mathbb{R}$ is given in the following, to
 show that the closure of a regular orbit is not necessarily a
 manifold.

  \begin{exe} Let $\lambda>1$ and $G$ be the group generated by
  $\lambda.id_{\mathbb{R}}$, then for every $x\in\mathbb{R}^{*}$,
  we have $G(x)=\{\lambda^{n}, \ n\in\mathbb{Z}\}$ and  $\overline{G(x)}=\{\lambda^{n}, \
  n\in\mathbb{Z}\}\cup\{0\}$. Thus $G(x)$ is discrete, so it is
  regular with order 0, but $\overline{G(x)}$ is not a manifold.
  \end{exe}
\bigskip

\bibliographystyle{amsplain}

\begin{thebibliography}{9}

\bibitem{aAhM05} Ayadi.A and Marzougui.H, \emph{Dynamic of abelian subgroups of GL(n, C):
a structure Theorem}, Geometria Dedicata, 116(2005)111-127.

\bibitem{aAh-M05}  Ayadi.A and Marzougui.H, \emph{Dense orbits for abelian subgroups of
GL(n, C)}, Foliations 2005: 47-69. World Scientific,Hackensack,NJ,
2006.

\bibitem{HP} H.Poincar\`e, \emph{M\'emoire sur les courbes d\'efinies par une \'equation diff\'erentiable},Princeton Univ. Press, 1970.

\bibitem{Denj} A.Denjoy, \emph{Sur les courbes définies par les équations différentiables \`a la surface  de
tore}, J.Math.Pures appl.11(1932)333-375.

\bibitem{YK} Y.Katznelson, \emph{Sigma-finite invariant measures for smooth mapping of circle} J.
Analyse Math.31(1977)1-18.

\bibitem{PS} P.Schweitzer, \emph{Counterexaple to seifert conjecture and opening closed leaves of foliations}
Ann.of Math.(2)100(1974)386-400.

\bibitem{VG} V.L.Ginzburg, \emph{On the existence and non-existence of closed trajectories for some Hamiltonian flows}, Math.Zeit.223(1996)397-409.

\bibitem{WR} W. Rossmann, \emph{Lie groups: an introduction through linear groups}, Oxford,
University Press, 2002.

\bibitem{GK.KK} G.Kuperberg and K.Kuperberg,\emph{Generalized counterexample to Seifert
conjecture}, Ann.of Math.144(1996)239-268.


\bibitem{G} W. H. Gottschalk,\emph{ Minimal Sets: An Introduction to Topological
Dynamics}, Bull. Amer. Math. Soc. 64 (1958) 336-351.


\bibitem{mW} Waldschmidt.M, Topologie des points rationnels.,
Cours de troisi\`eme Cycle, Universit\'e P. et M. Curie (Paris
VI), 1994/95.


\bibitem{S.Ch} S.Chihi, \emph{On the minimal orbits of an abelian linear action },
Differential Geometry-Dynamical System, vol.12,(2010),61-72.

\end{thebibliography}
\vskip 0,4 cm

\end{document}